\newtheorem{rem}{Remark}{\bf}{\rm}{\rm}
\newtheorem*{theorem*}{Theorem}
\newtheorem*{cor*}{Corollary}
\newtheorem{theorem}{Theorem}
\newtheorem{cor}{Corollary}
\newtheorem{prop}{Proposition}
\newtheorem{lem}{Lemma}
\newtheorem{ex}{Example}{\rm}{\rm}
\def\Real{\mathbb{R}}
\def\SO{\text{\rm SO}}
\def\SU{\text{\rm SU}}
\def\SL{\text{\rm SL}}
\newcommand{\frakg}{\mathfrak{g}}
\newcommand{\frakh}{\mathfrak{h}}
\newcommand{\frakf}{\mathfrak{f}}
\newcommand{\mathR}{\mathbb{R}}
\newcommand{\frakso}{\mathfrak{so}}
\newcommand{\fb}{\mathfrak{b}}
\newcommand{\n}{\mathfrak{n}}
\newcommand{\frakk}{\mathfrak{k}}
\newcommand{\g}{\mathfrak{g}}
\newcommand{\f}{\mathfrak{f}}
\newcommand{\fu}{\mathfrak{u}}
\newcommand{\su}{\mathfrak{su}}
\newcommand{\m}{\mathfrak{m}}
\newcommand{\h}{\mathfrak{h}}
\newcommand{\so}{\mathfrak{so}}
\newcommand{\hol}{\mathfrak{hol}}
\newcommand{\zr}{\ltimes}
\newcommand{\Su}{\mathfrak{S}}
\def\im{\mathop\text{\rm im}\nolimits}
\def\pr{\mathop\text{\rm pr}\nolimits}
\def\R{\mathcal{R}}
\newcommand{\frakc}{\mathfrak{c}}
\newcommand{\fraka}{\mathfrak{a}}
\newcommand{\be}{\begin{equation}}
\newcommand{\ee}{\end{equation}}
\let\leq=\leqslant
\let\geq=\geqslant
\begin{document}
	
	\title{On Lorentzian connections with parallel skew torsion}

	\author{Igor Ernst}\thanks{$^1$Department of Mathematics and Statistics, Masaryk University, Faculty of Science, Kotl\'a\v{r}sk\'a 2, 611 37 Brno, Czech
		Republic}
	
	\author{Anton S. Galaev}\thanks{$^2$University of Hradec Kr\'alov\'e, Faculty of Science, Rokitansk\'eho 62, 500~03 Hradec Kr\'alov\'e,  Czech
		Republic\\
		E-mail: anton.galaev(at)uhk.cz}

	\begin{abstract}
		The paper is devoted to metric connections with parallel skew-symmetric torsion in Lorentzian signature. This is motivated by recent progress in the Riemannian signature and by possible applications to  supergravity theories. 
		We provide a complete information about holonomy algebras, torsion and curvature of the considered connections up to the corresponding objects from the Riemannian signature. Various examples are constructed. It is shown how to construct all simply connected Lorentzian  naturally reductive homogeneous spaces of arbitrary dimension from Riemannian naturally reductive homogeneous spaces. This leads to complete classification of Lorentzian  naturally reductive homogeneous spaces in low dimensions.
		
		\vskip0.5cm
		
		{\bf Keywords}: Lorentzian manifold; parallel skew-symmetric torsion; holonomy; naturally reductive homogeneous space; type II supergravity.
		\vskip0.5cm
		
		
		
	\end{abstract}

	\maketitle

	\tableofcontents

\section{Introduction}

During the last decades a careful attention of many researchers was taken to Riemannian connections with torsion and related geometric structures, in particular
the holonomy,  e.g., \cite{AF04,A10,AFe14,AFeF15}. An important   
special case is provided by connections with parallel skew-symmetric torsion, these include, e.g., naturally reductive homogeneous spaces, Sasakian and 3-Sasakian manifolds, nearly K\"ahler manifolds and some other. 
Riemannian naturally reductive homogeneous spaces are classified  up to dimension 8 \cite{TV,KV,AFeF15,Storm2}. Irreducible holonomy groups of Riemannian metric connections with parallel skew-symmetric torsion were studied in \cite{CS04}. The general case  cannot be reduced to the case of irreducible holonomy and this situation was studied very recently in \cite{CMS21}.

Metric connections with skew-symmetric torsion play an important role in  mathematical physics (string theory, supergravity theory), see \cite{FF09,FI02,MSh} and the references therein. A sector of the Killing spinor and field equations of certain supergravity theories in dimensions 6 and 10 may be written in the form
$$ \nabla\Psi=0,\quad (H-2d\Phi)\cdot\Psi=0,\quad \textrm{Ric}^\nabla+2\nabla^gd\Phi=0,$$
where  $\nabla$ is a Lorentzian metric connection with skew-symmetric torsion $H$, $\Psi$ is a spinor field, and $\Phi$ is a function.  
The holonomy group of the connection $\nabla$ controls parallel object and the Ricci tensor, hence it may provide information about the solutions. In type II string theory one investigates manifolds $N^k \times M^{10 - k}$, where $N^k$ is a $k$-dimensional space-time and $M^{10 - k}$ is a Riemannian manifolds equipped with a 3-form $H$ defining the connection $\nabla$.  
 It is also interesting to consider more general backgrounds with the 3-form $H$ defined on the entire Lorentzian manifold, this would give new interesting solutions exploiting the Lorentzian nature of the backgrounds~\cite{FF09,Str}. 

In the present paper we consider Lorentzian manifolds $(M,g)$ with metric connections $\nabla$ that have parallel skew-symmetric torsion $T$. First in Section \ref{Secpomega} we show that if the holonomy algebra of $\nabla$ is weakly irreducible, i.e., it does not preserve any non-degenerate subspace of the tangent space, and  $M$ is simply connected, then  there exists a $\nabla$-parallel isotropic vector field $p$, and the torsion is of the form $p\wedge\omega$, where $\omega$ is a parallel bivector on the screen bundle $p^\bot/\left<p\right>.$ 
Then we describe general structures carrying a parallel isotropic vector fields and having the just indicated torsion.
 It turns out that the vector field $p$ is also parallel with respect to the Levi-Civita connection $\nabla^g$. Thus the holonomy algebras of the connections $\nabla^g$ and $\nabla$ are contained in the Lie algebra $\mathfrak{so}(n)\zr\mathbb{R}^n$. We show that the projections of these algebras
to $\mathfrak{so}(n)$ coincide and provide examples of structures such that the projections of the holonomy algebras
to $\mathbb{R}^n$ do not coincide. Examples of the spaces considered here include known flat Lorentzian connections with closed torsion as well as regular homogeneous plane waves. Then in Section \ref{SecRed} we start to assume that the holonomy algebra $\g$ preserves an orthogonal decomposition $L\oplus E$ of the tangent space and the induced representation of $\mathfrak{g}$ in $L$ is weakly irreducible. In Sections \ref{Secvert}, \ref{SecdimL>3}, \ref{SecL=2,3} we give a detailed description of the holonomy algebras and the corresponding torsion and curvature. We provide various examples. We use the results from \cite{CMS21} to show that in some cases the manifold is foliated by manifolds of smaller dimension and locally on the leaf space one obtains an induced metric connection with parallel skew-symmetric torsion. At the same time, we show that \emph{not} all statements proved in \cite{CMS21} for Riemannian manifolds are valid for Lorentzian manifolds.

In Section \ref{SecHom} 
 we study simply connected Lorentzian  naturally reductive homogeneous spaces. Previously such spaces were classified in dimension 3, see \cite{CM08,HBRR}, and 4, see~\cite{BLM15,CLBook}. Using the above results we show how to construct all simply connected Lorentzian  naturally reductive homogeneous spaces of arbitrary dimension from Riemannian naturally reductive homogeneous spaces. We consider in details the spaces of  dimension 3, 4, and 5. 

\vskip0.5cm

{\bf Acknowledgements.} The authors are thankful to Ilka Agricola, Ioannis Chrysikos,  Carlos Shahbazi for useful discussions and to the anonymous referee for valuable comments and suggestions that have significantly improved the article.  This work was supported from Operational Programme Research, Development and Education – “Project Internal Agency of Masaryk University” (No. CZ.02.2.69/0.0/0.0/19\_073/0016943). A.G. acknowledges institutional support of University of Hradec Kr\'alov\'e.

\section{Preliminaries} \label{SecPrel}
Let us collect basic information about metric connections with parallel skew-torsion, which may be found, e.g., in \cite{AF04,A10,AFeF15,CMS21}.
Consider a triple $(M, g, \nabla)$, where $g$ is a pseudo-Riemannian metric on a smooth manifold $M$, and $\nabla$ is a metric connection on $M$ with parallel skew-symmetric torsion $T$, i.e., it holds that
$$\nabla g=0,\quad \nabla T=0,\quad g(T(X,Y),Z)=-g(T(X,Z),Y).$$
The connection $\nabla$ is related to the Levi-Civita connection $\nabla^g$ by the equality
$$ \nabla = \nabla^g + \frac{1}{2}T. $$
Following \cite{CMS21} we call the triple  $(M, g, \nabla)$ a geometry with parallel skew-symmetric torsion. We will denote such geometry also 
by $(M, g, T)$. We will consider $T$ as a tensor field of different types. By abuse of notation we will write
$$ T(X, Y, Z) = g(T(X, Y), Z)=g(T(X)Y,Z).$$ Similar notation will be used for all differential $k$-forms.
We will identify bivectors with skew-symmetric endomorphisms using the equality $$(X\wedge Y)Z=g(X,Z)Y-g(Y,Z)X.$$

The curvature tensors of the connection $\nabla$ and the Levi-Civita connection are related by the formula
\begin{equation}
\label{RRg} R(X,Y)=R^g(X,Y)+\frac{1}{4}[T(X),T(Y)]-\frac{1}{2}T(T(X)Y).
\end{equation} 
This implies
\begin{equation}
\label{sympropR} g(R(X,Y)Z,W)=g(R(Z,W)X,Y).
\end{equation} 

The equality
$$\sigma_T(X,Y,Z)=\underset{X Y Z}{\Su}T(T(X,Y),Z)$$
defines a 4-form $\sigma_T$. It holds also that
\begin{equation}\label{T(X)T}
(T(X)\cdot T)(Y,Z,V)=\sigma_T(Y,Z,V,X).
\end{equation}

The condition $\nabla T=0$ implies
\begin{equation}\label{dT}
dT=2\sigma_T,\quad \nabla^g T=\frac{1}{2}\sigma_T,
\end{equation}
and the first Bianchi identity may be written in the form
\begin{equation}\label{Bident}\underset{X Y Z}{\Su}R(X,Y)Z=\sigma_T(X,Y,Z).\end{equation}

Let $(M, g, \nabla)$ be a geometry with parallel skew-symmetric torsion.
Let $\g$ be the holonomy algebra of the connection $\nabla$ at a point $x\in M$. 
We identify the tangent space $ (T_xM, g_x) $ with the pseudo-Euclidean  space $\big(\mathR^{r, s}, (\cdot,\cdot)\big)$.
Then $\g$ is identified with a subalgebra of $\so(r,s)$.

Let $\g$ be an arbitrary subalgebra of $\so(r, s)$. Let $T\in\wedge^2\Real^{r,s}\,\otimes\Real^{r,s}$ be a tensor.
The space of curvature tensors $\mathcal{R}^T(\g)$ with  torsion $T$ is defined as the space of maps $R \in \wedge^2 \mathR^{r, s} \otimes \g$ satisfying the first Bianchi identity~\eqref{Bident}.
If $T=0$, we denote the space $\mathcal{R}^T(\g)$ just by $\mathcal{R}(\g)$.
Let $\mathcal{L}(\mathcal{R}^T(\g))$ be the vector subspace in $\g$ spanned by images of the elements from $\mathcal{R}^T(\g)$. We call a subalgebra $\g\subset\so(r, s)$ a \emph{Berger algebra with torsion} $T$ if 
$\mathcal{L}(\mathcal{R}^T(\g))=\g$. 

\begin{prop}
	Let $(M,g)$ be a pseudo-Riemannian manifold and $\nabla$ a  metric connection on $(M,g)$ with parallel torsion (not necessary skew-symmetric).
	If $\g \subset \so(r,s)$ is the holonomy algebra of the connection $\nabla$ at a point $x\in M$, then $ \g $ is a Berger algebra with the torsion $T_x$.
\end{prop}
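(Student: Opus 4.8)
The statement is the classical "holonomy is a Berger algebra" result, adapted to connections with parallel (not necessarily skew) torsion. The proof strategy is the standard Ambrose–Singer plus differential-Bianchi argument, so let me lay out the steps.

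Let me think about what's available. We have $\nabla T = 0$, so $T$ is parallel. The holonomy algebra $\g$ at $x$ is, by the Ambrose–Singer theorem, spanned by elements of the form $P_\gamma^{-1} \circ R(X,Y) \circ P_\gamma$ where $P_\gamma$ is parallel transport along a loop... no wait, more precisely by $P_\gamma^{-1} R(\tilde X, \tilde Y) P_\gamma$ where $R$ is evaluated at the endpoint of a path $\gamma$ from $x$, and $\tilde X, \tilde Y$ are tangent vectors there. Actually the cleaner statement: $\g$ is spanned by $\{ P_\gamma^{-1} \cdot R_y(\tilde X, \tilde Y) \cdot P_\gamma : \gamma \text{ path from } x \text{ to } y, \tilde X, \tilde Y \in T_y M\}$.

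So I want to show $\mathcal{L}(\R^{T_x}(\g)) = \g$. The inclusion $\mathcal{L}(\R^{T_x}(\g)) \subseteq \g$ follows because any $R \in \R^{T_x}(\g)$ has values in $\g$ by definition, so its image lies in $\g$. For the reverse inclusion, I need to show that every Ambrose–Singer generator of $\g$ lies in the image of some element of $\R^{T_x}(\g)$.

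Here's where parallelism of $T$ matters. Pull back the curvature via parallel transport: define, for a path $\gamma$ from $x$ to $y$, the tensor $\hat R := P_\gamma^{-1} \cdot (R_y(P_\gamma \cdot, P_\gamma \cdot)) \cdot P_\gamma \in \wedge^2 T_x M \otimes \g$ (it takes values in $\g$ by Ambrose–Singer). I claim $\hat R \in \R^{T_x}(\g)$, i.e. it satisfies the first Bianchi identity $\eqref{Bident}$ with torsion $T_x$. At the point $y$, the curvature satisfies $\underset{XYZ}{\Su} R_y(X,Y)Z = \sigma_{T_y}(X,Y,Z)$ by $\eqref{Bident}$. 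Now pull this back along $\gamma$: since $P_\gamma$ is a linear isometry intertwining everything, and crucially since $T$ is $\nabla$-parallel we have $P_\gamma^{-1} \cdot T_y = T_x$, hence $P_\gamma^{-1} \cdot \sigma_{T_y} = \sigma_{T_x}$ (the $4$-form $\sigma_T$ is built algebraically from $T$, so parallel transport commutes with its construction). Therefore $\hat R$ satisfies the Bianchi identity with torsion $T_x$, so $\hat R \in \R^{T_x}(\g)$, and its image is the corresponding Ambrose–Singer subspace. Taking the span over all $\gamma$ and all tangent vectors at the endpoints gives $\g \subseteq \mathcal{L}(\R^{T_x}(\g))$.

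**Main obstacle.** The only genuinely non-routine point is verifying that the pulled-back curvature tensor $\hat R$ really lands in $\R^{T_x}(\g)$ — i.e. that the Bianchi identity is preserved under parallel transport, which is exactly where $\nabla T = 0$ is used (without it, $\hat R$ would satisfy Bianchi with torsion $P_\gamma^{-1}\cdot T_y \neq T_x$, and the argument collapses). One should also be a little careful that the elements of $\R^{T_x}(\g)$ need only take values in $\g$ and satisfy the first Bianchi identity — no second Bianchi identity or symmetry $\eqref{sympropR}$ is demanded in the definition — so $\hat R$ qualifies without further checks. I would also note the (routine) point that it suffices to realize each Ambrose–Singer generator individually in some $\R^{T_x}(\g)$-image, since $\mathcal{L}$ is defined as the span.
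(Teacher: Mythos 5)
Your proposal is correct and follows essentially the same route as the paper: both invoke the Ambrose--Singer theorem and then use $\nabla T=0$ to check that the parallel-transported curvature tensor still satisfies the first Bianchi identity with the fixed torsion $T_x$, hence lies in $\mathcal{R}^{T_x}(\g)$. The extra remarks you make (the trivial inclusion $\mathcal{L}(\mathcal{R}^{T_x}(\g))\subseteq\g$ and the fact that no symmetry or second Bianchi identity is required of elements of $\mathcal{R}^{T_x}(\g)$) are accurate but not needed beyond what the paper records.
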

\begin{proof}
	Denote the curvature tensor of the connection $\nabla$ by $R$.
	By the Ambrose-Singer theorem on holonomy, the holonomy algebra $\g$ of the connection $\nabla$ at the point $x\in M$ is spanned by the endomorphisms of $T_xM$ of the form
	$$  \tau^{-1}_\gamma R_y(X,Y) \tau_\gamma,$$
	for all piecewise smooth curves  $\gamma$ starting at the point $x$ and all vectors $X,Y \in T_yM$, where $y$ is the end-point of the curve $\gamma$. Here $\tau_\gamma$ denotes the parallel transport along the curve $\gamma$.
	Since $\nabla T=0$, it holds  that $\tau^{-1}_\gamma T_y(\tau_\gamma X,\tau_\gamma Y) = T_x(X,Y)$. Using this and the Bianchi identity we get
	\begin{multline*}
	\underset{X Y Z}{\Su} \tau^{-1}_\gamma R_y(\tau_\gamma X,\tau_\gamma Y)\tau_\gamma Z = 
	\underset{X Y Z}{\Su} \tau^{-1}_\gamma T_y(T_y(\tau_\gamma X,\tau_\gamma Y),\tau_\gamma Z) = \\
	= \underset{X Y Z}{\Su} \tau^{-1}_\gamma T_y(\tau_\gamma T_x(X,Y),\tau_\gamma Z) = \underset{X Y Z}{\Su} T_x(T_x(X,Y),Z).
	\end{multline*}
	This shows that 
	the map defined by 
	$$ (X,Y) \mapsto \tau^{-1}_\gamma R_y(\tau_\gamma X,\tau_\gamma Y)\tau_\gamma $$ 
	is an element of $ \mathcal{R}^{T_x}(\g)$.
	This completes the proof.
\end{proof}

Note that if $\mathcal{R}^{T}(\g)$ is non-empty, then it is an affine space with the corresponding vector space $ \mathcal{R}(\g)$. In particular, if
$ \mathcal{R}(\g)=0,$ then $ \mathcal{R}^{T}(\g)$ is either empty or contains only one element.
The following statement is known \cite[Lemma 5.6]{CS04},  we prove it for completeness.

\begin{cor} In the settings of the proposition, if $ \mathcal{R}(\g)=0$,
	i.e., the space $ \mathcal{R}^{T_x}(\g)$ contains only one element, then $\nabla R=0$.
\end{cor}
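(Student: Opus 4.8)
The plan is to extract everything from the computation already carried out in the proof of the Proposition. There it is shown that for every piecewise smooth curve $\gamma$ starting at $x$ and ending at a point $y$, the map
$$ R^\gamma \colon (X,Y)\longmapsto \tau_\gamma^{-1} R_y(\tau_\gamma X,\tau_\gamma Y)\,\tau_\gamma $$
is an element of $\mathcal{R}^{T_x}(\g)$; implicit in this is that the curvature operators at $y$ lie in the holonomy algebra $\g_y$ and that $\tau_\gamma^{-1}\g_y\tau_\gamma=\g$. Taking $\gamma$ to be the constant curve at $x$ shows $R_x\in\mathcal{R}^{T_x}(\g)$, so this set is non-empty; since it is an affine space whose underlying vector space is $\mathcal{R}(\g)=0$, it consists of the single point $R_x$. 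Hence $R^\gamma=R_x$ for \emph{every} curve $\gamma$, i.e.
$$ \tau_\gamma^{-1} R_y(\tau_\gamma X,\tau_\gamma Y)\,\tau_\gamma = R_x(X,Y)\qquad\text{for all }X,Y\in T_xM. $$

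Next I would reinterpret this path-independence as covariant constancy of $R$. Fix $v\in T_xM$, choose a curve $\gamma$ with $\gamma(0)=x$ and $\dot\gamma(0)=v$, and write $\tau_t$ for parallel transport along $\gamma$ from $\gamma(0)$ to $\gamma(t)$. By the standard formula for the covariant derivative of a tensor field,
$$ (\nabla_v R)(X,Y) = \frac{d}{dt}\Big|_{t=0}\Big(\tau_t^{-1} R_{\gamma(t)}(\tau_t X,\tau_t Y)\,\tau_t\Big), $$
and by the previous step the bracketed expression is constantly equal to $R_x(X,Y)$. So $\nabla_v R=0$, and as $v$ and $x$ were arbitrary, $\nabla R=0$.

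I do not expect any genuine obstacle here: the argument is essentially bookkeeping on top of the Proposition and of the observation that $\mathcal{R}^T(\g)$ is an affine space with corresponding vector space $\mathcal{R}(\g)$. The two points that deserve a line of justification are (i) that each curvature operator $R_y(U,V)$ takes values in $\g_y$, so that its conjugate by $\tau_\gamma$ takes values in $\g=\tau_\gamma^{-1}\g_y\tau_\gamma$ — this is part of the Ambrose--Singer description already invoked in the Proposition; and (ii) the identification of the condition ``$R$ transported to $x$ along every curve equals $R_x$'' with ``$\nabla R=0$'', which is just the definition of parallelism of a tensor field spelled out on the endomorphism slot (conjugation by $\tau_\gamma$) and the two covector slots (precomposition with $\tau_\gamma$).
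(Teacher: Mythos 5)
Your proposal is correct and follows essentially the same route as the paper: both arguments observe that $R_x$ and every $\tau_\gamma^{-1}R_y(\tau_\gamma\,\cdot,\tau_\gamma\,\cdot)\tau_\gamma$ lie in the one-point set $\mathcal{R}^{T_x}(\g)$, hence coincide, which is exactly the statement that $R$ is $\nabla$-parallel. Your extra paragraph spelling out the passage from path-independence under parallel transport to $\nabla R=0$ is just making explicit what the paper leaves implicit.
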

\begin{proof}
	The fact that $R_x$ as well as all elements $\tau^{-1}_\gamma R_y(X,Y) \tau_\gamma$ belong to $\mathcal{R}^{T_x}(\g)$ implies that
	$\tau^{-1}_\gamma R_y(X,Y) \tau_\gamma=R_x$, i.e., $R$ is $\nabla$-parallel.
\end{proof}

\medskip

An important class of geometries with parallel skew-symmetric torsion form naturally reductive pseudo-Riemannian homogeneous spaces. Here we are following the exposition from the book~\cite{CLBook}. By the Ambrose-Singer theorem for homogeneous structures, a connected simply connected complete pseudo-Riemannian manifold $(M,g)$ is a reductive homogeneous manifold if and only if 
$(M,g)$ admits a connection $\nabla$ such that it holds
$$\nabla g=0,\quad \nabla R=0,\quad \nabla T=0.$$ Such a connection $\nabla$ is called an Ambrose-Singer connection (AS-connection).
 Suppose that $(M,g)$ is reductive homogeneous and fix an AS-connection $\nabla$.
Fix a point $o\in M$ and denote by $\m$ the tangent space $T_oM$. Let $\g\subset\so(\m)$ be the holonomy algebra of the connection $\nabla$ at the point $o$. Since the curvature tensor of the connection $\nabla$ is parallel, by the Ambrose-Singer theorem on holonomy,
$$\g=\im R_o=R_o(\m,\m)=\textrm{span}\{R_o(X,Y)|X,Y\in\m\}.$$
Since $R$ and $T$ are $\nabla$-parallel, it holds
\begin{equation}
\g\cdot R_o=\g\cdot T_o=0.\label{propRT1}
\end{equation}
The first and the second Bianchi identities take the form
\begin{align}
\underset{X Y Z}{\Su}R_o(X,Y)Z&=\underset{X Y Z}{\Su}T_o(T_o(X,Y),Z),\label{propRT2}\\
\underset{X Y Z}{\Su}R_o(T_o(X,Y),Z)&=0\label{propRT3}.
\end{align}
Consider the vector space \begin{equation}\label{reddecf} \f=\g\oplus\m.\end{equation} The above equalities show that the assignment
\begin{align*}[A,B]&=[A,B]_\g,\quad [A,X]=AX,\\ [X,Y]&=-R_o(X,Y)-T_o(X,Y),\quad A,B\in\g,\,\,X,Y\in\m\end{align*} 
defines the Lie bracket on $\f$ (here $[\cdot,\cdot]_\g$ is the Lie bracket of the Lie algebra $\g$). It is clear that the decomposition \eqref{reddecf} is reductive. 
The Lie algebra $\g$ is called \emph{the transvection algebra} of the structure $(M,g,\nabla)$.

Suppose now that a Lie algebra $\f$ with a reductive decomposition \eqref{reddecf} is given. 
Let $F$ be the simply connected Lie group with the Lie algebra $\f$.  
Let $G\subset F$ be the connected Lie subgroup corresponding to the subalgebra $\g\subset\f$. The reductive decomposition \eqref{reddecf} is called \emph{regular} if the subgroup  $G\subset F$ is closed. This is the case if  $\f$ is a transvection algebra. 
Suppose that the reductive decomposition \eqref{reddecf} is regular and it holds $[\m,\m]_\g=\g$, where $$[\m,\m]_\g=\textrm{span}\{[X,Y]_\g|X,Y\in\m\},$$
and
$[\cdot,\cdot]_\g$ denotes the projection of the Lie bracket to $\g$.  Suppose that the representation of $\g$ in $\m$ is faithful, and there is a $\g$-invariant pseudo-Euclidean metric on the vector space $\m$.
These data define the reductive homogeneous space $(F/G,g)$.  
Let $\nabla$ be the canonical connection on $(F/G,g)$. The curvature tensor and the torsion of $\nabla$ are parallel with respect to $\nabla$. The tangent space at the origin $o\in F/G$ is identified with $\m$ and it holds 
\begin{equation}
\label{RTbra}
R_o(X,Y)=-[X,Y]_\g,\quad T_o(X,Y)=-[X,Y]_\m,\quad X,Y\in \m.\end{equation} 
Under that construction, if $\f$ is the transvection algebra of a structure $(M,g,\nabla)$, then  the just constructed structure $(F/G,g,\nabla)$ is canonically isomorphic to the initial $(M,g,\nabla)$. The triple $(\m,R_o,T_o)$ is called \emph{the infinitesimal model} of the transvection algebra $\f$ (or of the corresponding reductive homogeneous space).

Finally, a reductive pseudo-Riemannian homogeneous space $(F/G,g)$ with a reductive decomposition~\eqref{reddecf} is called \emph{naturally reductive} if 
 it holds that 
$$g_o([X,Y]_\m,Z)=-g_o([X,Z]_\m,Y),\quad \forall X,Y,Z\in\m,$$ where $[X,Y]_\m$ is the projection of $[X,Y]$ to $\m$. The  AS-connections corresponding to naturally reductive homogeneous spaces are exactly these with skew-symmetric torsion.

\section{Connections with parallel isotropic vector field $p$ and torsion~$p\wedge \omega$} \label{Secpomega}

In this section we show that if the manifold $M$ is simply connected and the holonomy algebra of a structure $(M,g,\nabla)$  is weakly irreducible, then on $M$ there exists a $\nabla$-parallel isotropic vector field $p$, and the torsion is of the from $p\wedge \omega$ for a certain bivector $\omega$. Then we study general structures $(M,g,\nabla)$  admitting $\nabla$-parallel isotropic vector fields $p$ and having the torsion $p\wedge \omega$.

Recall that a subalgebra of a pseudo-orthogonal algebra is called  weakly irreducible if it does not preserve any proper non-degenerate subspace of the pseudo-Euclidean space. For a subalgebra of the orthogonal algebra  this condition is equivalent to the irreducibility. Irreducible holonomy algebras of metric  connections with parallel skew torsion in the Riemannian signature were studied in~\cite{CS04}. 

Let us recall the classification of weakly irreducible subalgebras $\g\subset\so(1,n+1)$, $n\geq 1$ \cite{BBI}. If $\g$ is irreducible, then   $\g=\so(1,n+1)$. It is clear that if $\g=\so(1,n+1)$ is the holonomy algebra of a connection with non-zero parallel skew torsion, then $n=1$, and the torsion form is proportional to the volume form. By this reason we assume that $\g$ is weakly irreducible and not irreducible. In this case~$\g$ preserves an isotropic line in $\Real^{1,n+1}$. 
Fix a Witt basis $p,e_1,\dots,e_n,q$ of the Minkowski space $\Real^{1,n+1}$. We will denote by $\Real^n$ the vector subspace of $\Real^{1,n+1}$ spanned by the vectors $e_1,\dots,e_n$. With respect to the basis $p,e_1,\dots,e_n,q$ the subalgebra of $\so(1,n+1)$ preserving the isotropic line $\Real p$ has the following matrix form:
$$
\so (1, n + 1)_{\Real p} = \left\{
\left. 
\begin{pmatrix}     
a & -X^t & 0 \\
0 & A & X \\
0 & 0 & -a
\end{pmatrix} \right|
\begin{matrix}     
a \in \mathR \\
A \in \frakso (n) \\
X \in \mathR^n
\end{matrix} \right\} .
$$	
The above matrix may be identified with the bivector
$$-ap\wedge q+A+p\wedge X,$$
and we get the decomposition 
$$\so (1, n + 1)_{\Real p} = (\mathR p\wedge q \oplus  \so(n))\zr p \wedge \mathR^n.$$
There are four types of weakly irreducible  subalgebras  $\g\subset\so(1, n + 1)_{\Real p}$:
\begin{itemize}
	\item[Type 1.] $ \g^{1, \h} = (\mathR p\wedge q \oplus\h )\zr p \wedge \mathR^n$,
	\item[Type 2.] $\g^{2, \h} =\h\zr p \wedge \mathR^n,$
	\item[Type 3.] $\g^{3, \h, \varphi} =  \{ \varphi(A) p\wedge q + A \, | \, A \in \h \}\zr p \wedge \mathR^n,$ 
	\item[Type 4.] $\g^{4, \h, m, \psi} =  \{ A+p\wedge \psi(A)  \, | \, A \in \h \}\zr p \wedge \mathR^m.$ 
\end{itemize} 
Here $\h \subset \so(n)$  is a subalgebra, $\varphi : \h \to \Real$  is a non-zero linear map with the property $\varphi|_{[\h, \h]} = 0$. For the last algebra there exists an orthogonal decomposition  $\mathR^n = \mathR^m \oplus \mathR^{n - m}$  such that $\h \subset \so(m)$, and $\psi:\h\to \mathR^{n - m}$  is a surjective linear map with  $\psi|_{[\h, \h]} = 0$. The subalgebra $\h \subset \so(n)$ coincides with the $\so(n)$-projection of $\g$ and it is called the orthogonal part of $\g$.

Let $(M, g, \nabla)$ be a Lorentzian geometry with non-zero parallel skew-symmetric torsion $T$, $\dim M=3$ and suppose that the holonomy algebra of $\nabla$ is weakly irreducible. Then $T$ is proportional to the volume form, and $\g\subset\so(1,2)$ is one of the Lie algebras
$\so(1,2)$, $\Real p\wedge q\zr\Real p\wedge e_1$, $\Real p\wedge e_1$. Thus we may assume that $\dim M\geq 4$.

\begin{lem}\label{lemwirr}
	Let $\g\subset\so(1,n+1)_{\Real p}$ be a weakly irreducible subalgebra, where $n\geq 2$. Suppose that $\g$ annihilates a non-zero 3-vector $S \in \wedge^3 \mathR^{1, n + 1}$. Then $\g$ is either of type 2 or 4, i.e., it annihilates the isotropic vector $p$. Moreover, it holds $$S=p\wedge \omega,$$ where $\omega\in\wedge^2\Real^n$, and the subalgebra $\h\subset\so(n)$ annihilates $\omega$. 
\end{lem}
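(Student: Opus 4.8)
The plan is to argue entirely at the level of the 3-vector $S$, using that every weakly irreducible $\g\subset\so(1,n+1)_{\Real p}$ contains a translational part $p\wedge\Real^m$. Fix the Witt splitting $\Real^{1,n+1}=\Real p\oplus\Real^n\oplus\Real q$ and decompose $\wedge^3\Real^{1,n+1}=\wedge^3\Real^n\oplus(p\wedge\wedge^2\Real^n)\oplus(q\wedge\wedge^2\Real^n)\oplus(p\wedge q\wedge\Real^n)$, writing $S=S_0+p\wedge\alpha+q\wedge\beta+p\wedge q\wedge v$ with $S_0\in\wedge^3\Real^n$, $\alpha,\beta\in\wedge^2\Real^n$, $v\in\Real^n$. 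Inspecting the four types, $\g$ always contains $p\wedge\Real^m$, where $\Real^m=\spa(e_1,\dots,e_m)$ is the space on which the orthogonal part $\h$ acts, and one checks $m\ge 2$ whenever $n\ge 2$: for types 1--3 one has $m=n$, and for type 4 the inclusion $\h\subset\so(m)$ together with surjectivity of $\psi\colon\h\to\Real^{n-m}$ rules out $m\le 1$.

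The first main step is to insert the translational elements into $\g\cdot S=0$. From $(X\wedge Y)Z=(X,Z)Y-(Y,Z)X$ one gets $(p\wedge e_i)p=0$, $(p\wedge e_i)q=e_i$, $(p\wedge e_i)e_j=-\delta_{ij}p$, and extending as a derivation to $\wedge^3$ gives, for each $i\le m$, an identity $(p\wedge e_i)\cdot S=0$ whose four graded components read $e_i\wedge\beta=0$, $e_i\lrcorner\beta=0$, and $e_i\lrcorner S_0=e_i\wedge v$ (the $q\wedge\wedge^2\Real^n$ component is automatically zero). From $e_i\lrcorner\beta=0$ for all $i\le m$ we obtain $\beta\in\wedge^2\Real^{n-m}$, whence $e_i\wedge\beta=0$ forces $\beta=0$. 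Applying $e_i\lrcorner$ to $e_i\lrcorner S_0=e_i\wedge v$ gives $v-(e_i,v)e_i=0$, so $v\in\Real e_i$; since $m\ge 2$ this yields $v=0$, and then $e_i\lrcorner S_0=0$ for all $i\le m$, i.e.\ $S_0\in\wedge^3\Real^{n-m}$. Thus $S=S_0+p\wedge\alpha$ with $S_0\in\wedge^3\Real^{n-m}$.

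Next I would eliminate types 1 and 3. For these $m=n$, so already $S_0=0$ and $S=p\wedge\alpha$ with $\alpha\neq 0$ (as $S\neq 0$). In type 1, $p\wedge q\in\g$ acts by $p\mapsto -p$, $q\mapsto q$, $e_i\mapsto 0$, hence $(p\wedge q)\cdot S=-p\wedge\alpha\neq 0$, a contradiction. In type 3, the elements $\varphi(A)\,p\wedge q+A$, $A\in\h$, give $p\wedge\bigl(A\cdot\alpha-\varphi(A)\alpha\bigr)=0$, i.e.\ $A\cdot\alpha=\varphi(A)\alpha$; pairing with $\alpha$ in the (positive definite) induced inner product on $\wedge^2\Real^n$, and using that $\so(n)$ acts skew-symmetrically there, gives $\varphi(A)\,(\alpha,\alpha)=0$, so $\varphi\equiv 0$, contradicting $\varphi\neq 0$. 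Hence $\g$ is of type 2 or 4, and in particular $\g p=0$.

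It remains to finish types 2 and 4 and to identify $\omega$. For type 2 ($m=n$, so $S=p\wedge\alpha$), the elements $A\in\h$ give $p\wedge(A\cdot\alpha)=0$, so $A\cdot\alpha=0$. For type 4 I would act by $A+p\wedge\psi(A)$, $A\in\h\subset\so(m)$: since $A$ kills $p,q$ and fixes $\Real^{n-m}$ pointwise, $A\cdot S_0=0$, and a short computation collapses $\bigl(A+p\wedge\psi(A)\bigr)\cdot S=0$ to $A\cdot\alpha=\psi(A)\lrcorner S_0$. Decomposing $\alpha$ and $\wedge^2\Real^n$ along $\wedge^2\Real^m\oplus(\Real^m\wedge\Real^{n-m})\oplus\wedge^2\Real^{n-m}$, the left side lies in the first two summands and the right side in the third, so both vanish: $A\cdot\alpha=0$ and $\psi(A)\lrcorner S_0=0$ for all $A\in\h$. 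Surjectivity of $\psi$ then gives $X\lrcorner S_0=0$ for every $X\in\Real^{n-m}$, hence $S_0=0$. In either case $S=p\wedge\omega$ with $\omega:=\alpha\in\wedge^2\Real^n$, and $\h$ annihilates $\omega$, as claimed. I expect the type-4 case to be the main obstacle: the component $S_0\in\wedge^3\Real^{n-m}$ is invisible to the translational part $p\wedge\Real^m$, and only the mixed elements $A+p\wedge\psi(A)$ together with surjectivity of $\psi$ suffice to kill it, so the one place where genuine care is needed is the clean bookkeeping of graded components in that computation.
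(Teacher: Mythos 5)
Your proof is correct and follows essentially the same route as the paper's: decompose $S$ along the Witt splitting, feed the translational part $p\wedge\Real^m$ of each of the four types into $\g\cdot S=0$ to kill the $q\wedge\wedge^2\Real^n$ and $p\wedge q\wedge\Real^n$ components and confine the $\wedge^3\Real^n$ part, then rule out types 1 and 3 and finish types 2 and 4. If anything, you are more careful than the printed proof at two points it passes over quickly: the skew-symmetry/eigenvalue argument forcing $\varphi\equiv 0$ in type 3, and the use of the mixed elements $A+p\wedge\psi(A)$ together with surjectivity of $\psi$ to kill the residual component $S_0\in\wedge^3\Real^{n-m}$ in type 4.
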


\begin{proof}
Consider as above a Witt basis $p, e_1, \dots, e_n, q$ of the   Minkowski space $ \mathR^{1, n + 1}$. Any 3-vector $S \in \wedge^3 \mathR^{1, n + 1}$ may be decomposed as
	\begin{equation}
	\label{decomposiotionOfThreeForm}
	S = p\wedge q \wedge X + p\wedge \omega + q \wedge \eta + \xi,
	\end{equation} 
	where $X \in \mathR^n, \, \omega, \eta \in \wedge^2 \mathR^n, \, \xi \in \wedge^3 \mathR^n.$ For weakly irreducible algebras of each of the four types we will consider the equation 
	$$ \g \cdot S = 0. $$
	
	First consider 3-vectors that are annihilated by $p \wedge V,$ where $V \in \mathR^n$ is non-zero: 
	\begin{equation}
	\label{invariantTorsionElement}
	(p\wedge V) \cdot S = p \wedge V \wedge X + V \wedge \eta + p \wedge q \wedge \eta(V) - p\wedge \xi(V) = 0. 
	\end{equation}
	This is equivalent to the following system of equations:
	$$ V \wedge X  = \xi(V), \quad  \eta(V)  =  0,\quad  
	V \wedge \eta  = 0.
	$$
	Substituting $V$ to the third equation, we get
	$$ 0 = (V \wedge \eta) (V) = (V, V) \eta = 0. $$  
	This implies $\eta = 0.$ Substituting $V$ to the first equation we get
	$ (V, V) X = 0. $
	Thus, $X = 0 $ and the only remaining equation is $ \xi(V) = 0.$
	This equation  holds true if and only if $ \xi \in \wedge^3 (V^\perp).$
	Concluding, a 3-vector $S$ on $\mathR^{1,n + 1}$ is annihilated by an element $p\wedge V,$ where $V \in \mathR^n$ if and only if $S$ has the form
	$$ S = p\wedge \omega + \xi, $$
	where $\omega \in \wedge^2 \mathR^n, \, \xi \in \wedge^3 (V^\perp). $
	Therefore, a 3-vector $S$ is annihilated by $\g^{2, \h}$ or $\g^{4, \h,\psi}$ only if $S = p \wedge \omega,$ where $\omega \in \wedge^2 \mathR^n$ is annihilated by $\h$. Next, it holds that 
	$$ (p\wedge q ) \cdot (p\wedge \omega) = -p\wedge w. $$
	Therefore, there are no non-zero 3-vectors annihilated by the algebras of types 1 and 3. 
\end{proof}


Let $(M, g, \nabla)$ be a Lorentzian geometry with non-zero parallel skew-symmetric torsion $T$. Suppose that there exists a $\nabla$-parallel isotropic vector field $p$. Since the metric $g$ is $\nabla$-parallel, the distribution $p^\bot$ is $\nabla$-parallel.
The bundle $E=p^\bot/\left<p\right>$ is called  \emph{the screen bundle}, see, e.g., \cite{LScr}. There is the obvious projection $p^\bot\to E$. The connection $\nabla$ induces a connection $\nabla^E$ on $E$: if $X$ is a vector field on $M$ and $Y$ is a section of $E$, then  $\nabla^E_XY$ is the projection to $E$ of the vector field $\nabla_X \tilde Y$, where  $\tilde Y$ is an arbitrary section of $p^\bot$ such that its projection to $E$ is $Y$.

\begin{cor}\label{corwirrcase}
	Let $(M, g, \nabla)$ be a Lorentzian geometry with non-zero parallel skew-symmetric torsion $T$, $\dim M = n + 2\geq 4$. Let $\g$ be the holonomy algebra of $\nabla$. Suppose that $\g$ is weakly irreducible and suppose that $M$ is simply connected. Then on $M$  there exists a $\nabla$-parallel isotropic vector field $p$ and a $\nabla^E$-parallel bivector $\omega$ on the screen bundle $E=p^\bot/\left<p\right>$ such that for $T$ it holds $$T=p\wedge \omega.$$  
\end{cor}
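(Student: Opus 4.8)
The plan is to reduce the global statement to Lemma~\ref{lemwirr} by means of the holonomy principle. Fix $x\in M$ and identify $(T_xM,g_x)$ with the Minkowski space $\Real^{1,n+1}$ (here $n\geq 2$, since $\dim M=n+2\geq 4$), so that $\g$ is a weakly irreducible subalgebra of $\so(1,n+1)$. Because $T$ is non-zero and skew-symmetric, $T_x$ is a non-zero $3$-vector on $\Real^{1,n+1}$, and because $T$ is $\nabla$-parallel the holonomy group fixes $T_x$, hence $\g$ annihilates $T_x$. Were $\g$ irreducible it would equal $\so(1,n+1)$, which by the remark preceding Lemma~\ref{lemwirr} cannot be the holonomy algebra of a connection with non-zero parallel skew torsion when $n\geq2$; so $\g$ is weakly irreducible but not irreducible, it preserves an isotropic line, and after choosing a suitable Witt basis $p,e_1,\dots,e_n,q$ it is one of the algebras $\g^{1,\h},\g^{2,\h},\g^{3,\h,\varphi},\g^{4,\h,m,\psi}$. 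Now Lemma~\ref{lemwirr} applies: $\g$ is of type $2$ or $4$, it annihilates the isotropic vector $p$, one has $T_x=p\wedge\omega_x$ with $\omega_x\in\wedge^2\Real^n$, and the orthogonal part $\h=\pr_{\so(n)}(\g)$ annihilates $\omega_x$.

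Next I would globalize $p$. Since $M$ is simply connected and $\g$ annihilates $p_x$, the holonomy principle yields a $\nabla$-parallel vector field $p$ on $M$ with value $p_x$ at $x$; it is isotropic because $p_x$ is. As $g$ is $\nabla$-parallel, $p^\bot$ is a $\nabla$-parallel distribution, and the screen bundle $E=p^\bot/\langle p\rangle$ together with its induced connection $\nabla^E$ are defined as in the paragraph preceding the statement.

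Then I would globalize $\omega$. Parallel transport of $\nabla$ preserves both $p^\bot$ and $\langle p\rangle$, hence descends to parallel transport of $\nabla^E$ along the same curves; consequently the holonomy group of $\nabla^E$ at $x$ is the image of $\Hol_x(\nabla)$ under the projection $\so(1,n+1)_{\Real p}\to\so(n)$, a connected group with Lie algebra $\h$. Since $\h$ annihilates $\omega_x\in\wedge^2\Real^n\cong\wedge^2 E_x$, this group fixes $\omega_x$, and the holonomy principle again produces a $\nabla^E$-parallel bivector $\omega$ on $E$ with value $\omega_x$ at $x$. Finally, $p\wedge\omega$ is a well-defined $3$-form on $M$: for any $2$-form $\bar\omega$ on $M$ lifting $\omega$ the form $g(p,\cdot)\wedge\bar\omega$ does not depend on the lift (the ambiguity in $\bar\omega$ has the shape $g(p,\cdot)\wedge\beta$, killed by $g(p,\cdot)\wedge(\cdot)$), and it is $\nabla$-parallel because $g(p,\cdot)$ is $\nabla$-parallel while $\nabla\bar\omega$ projects to $\nabla^E\omega=0$. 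Thus $T$ and $p\wedge\omega$ are $\nabla$-parallel $3$-forms that agree at $x$ by Lemma~\ref{lemwirr}, hence they agree on all of the connected manifold $M$.

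The conceptual content is already carried by Lemma~\ref{lemwirr}; the steps that need genuine care are the identification of $\Hol_x(\nabla^E)$ with the connected group with Lie algebra $\h$, and the choice-free formulation of the decomposition $T=p\wedge\omega$ on the screen bundle together with the verification that the resulting $3$-form is indeed $\nabla$-parallel. These are routine, but are where one must be attentive.
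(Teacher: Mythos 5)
Your argument is correct and is exactly the route the paper intends: the corollary is stated without proof as an immediate consequence of Lemma~\ref{lemwirr} applied to $S=T_x$, combined with the holonomy principle on the simply connected $M$ to globalize $p$ and $\omega$ (the paper itself notes later that $\hol_x(\nabla^E)$ is the $\so(n)$-projection of $\g$, which is the one point you rightly flag as needing care). Your extra verification that $p\wedge\omega$ is well defined on the screen bundle and $\nabla$-parallel is a welcome spelling-out of what the paper leaves implicit.
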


Motivated by the corollary, in the rest of this section we consider
 a Lorentzian geometry $(M, g, \nabla)$ carrying a $\nabla$-parallel isotropic vector field $p$  such that   the torsion is given by a 3-vector  
$$ T = p \wedge \omega, $$
where $\omega \in \wedge^2 E$ is a $\nabla^E$-parallel bivector on the screen bundle $E= p^\perp/\left<p\right>$. 

It holds that $$ T(X, Y) = g(p,X)\omega( Y) - g(p,Y) \omega( X) + \omega(X,Y)p $$
for all vector fields $X$ and $Y$ on $M$.

The curvature tensor of $\nabla$ takes the form 
\begin{equation}
\label{curvatureTensorOmega} 
R(X, Y) = R^g(X,Y) + \frac{1}{4} p \wedge \big(g(p, Y) \omega^2( X) - g(p, X) \omega^2( Y) \big) . 
\end{equation}

From \eqref{T(X)T} it follows that
$$\sigma_T=0.$$ 
This and the Bianchi identity imply that the holonomy algebra $\g$ of the connection $\nabla$ is a Berger algebra with zero torsion, and consequently $\g$ is the holonomy algebra of the Levi-Civita connection on a Lorentzian manifold, see, e.g., \cite{GalRMS}. From \eqref{T(X)T} it follows also that
 $T$ is parallel with respect to the Levi-Civita connection:
$$   \nabla^g T=0.$$
From \eqref{dT} it follows that
$$dT=0.$$ It can be shown that conversely each indecomposable Lorentzian geometry with closed torsion is of the type considered in this section. 

Next, the vector field $p$ is parallel with respect to $\nabla^g:$
$$ \nabla^g_X p = \nabla_X p - \frac{1}{2}T(X, p) = 0.$$
This shows that $(M, g)$ is a Walker manifold with a parallel isotropic vector field. Therefore, locally there are coordinates $v, x^1, \dots, x^n, u$ such that the metric $g$  takes the form 
\begin{equation} \label{walkerMetric}
g = 2 dv du + h + 2A du + H (du)^2, 
\end{equation}
where $h=\sum_{i,j=1}^nh_{ij}(x^1, \dots, x^n, u) dx^i dx^j$ is a $u$-family of local Riemannian metrics,\\ $A=\sum_{i = 1}^n A_i(x^1, \dots, x^n, u) dx^i$ is a 1-form, and $H=H(x^1, \dots, x^n, u)$ is a local function (see, e.g., \cite{Walkerbook,GalRMS}). The vector field $\partial_v$ is isotropic and parallel. The 2-form $\omega$ may be expressed as
$$\omega=2\sum_{1\leq i<j\leq n}\omega_{ij} dx^i\wedge dx^j.$$

We see that both the holonomy algebras of the connections $\nabla$ and $\nabla^g$ are contained in $\so(n)\zr\Real^n\subset\so(1,n+1)_{\mathbb{R} p}$. Now we are going to compare these holonomy algebras. 

Consider the screen bundle
$$ E = p^\perp / \left<p \right>.$$
Note that the bundle $ p^\perp$ is parallel with respect to both $\nabla$ and $\nabla^g.$ Denote by $\nabla^{g,E}$ the connection on $E$ induced by $\nabla^g$. The 2-form $\omega$ on $E$ is parallel with respect to the both connections on $E$.
It holds that $$\nabla^E_X=\nabla_X^{g,E},\quad\forall X\in\Gamma(p^\bot),\quad \nabla^E_{\partial_u}=\nabla_{\partial_u}^{g,E}+\frac{1}{2}\omega.$$ 
The holonomy algebra $\hol_x(\nabla^{g,E})$ coincides with the $\so(n)$-projection of the holonomy algebra of the connection $\nabla^{g}$ \cite{LScr}. Likewise, it is not hard to check that $\hol_x(\nabla^{E})$ coincides with the $\so(n)$-projection of $\g$.

\begin{prop}\label{Proph}
	The holonomy algebras $\hol_x(\nabla^E)$ and $\hol_x(\nabla^{g,E})$ coincide.
\end{prop}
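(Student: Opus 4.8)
The plan is to use the difference formula between the two induced connections on the screen bundle $E=p^\bot/\langle p\rangle$. From the relations recalled just before the statement — equivalently, directly from $\nabla=\nabla^g+\tfrac12 T$ with $T=p\wedge\omega$, using that $g(p,Y)=0$ whenever $Y\in p^\bot$ — one gets $\nabla^E_X-\nabla^{g,E}_X=\tfrac12\,g(p,X)\,\omega$ for all vector fields $X$; thus the two metric connections on $E$ differ by the $\so(E)$-valued $1$-form $\tfrac12\,g(p,\cdot)\otimes\omega$. I would first record the two features of this $1$-form that do all the work: $g(p,\cdot)$ is $\nabla^g$-parallel (because $p$ is), hence closed; and $\omega$, regarded as a skew-symmetric endomorphism of $E$, is parallel for both $\nabla^E$ and $\nabla^{g,E}$, so in particular $[\omega,\omega]=0$. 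A useful sanity check to include is that these two facts already force the curvature $2$-forms of $\nabla^E$ and $\nabla^{g,E}$ to coincide (the exterior covariant derivative of $\tfrac12 g(p,\cdot)\,\omega$ vanishes, and the quadratic term is a multiple of $[\omega,\omega]=0$), although equality of curvatures by itself would not yet settle the holonomy. Since the holonomy algebra — equivalently the restricted holonomy group — of a connection is unchanged under pullback to the universal covering, I would assume at this point that $M$ is simply connected, so that $g(p,\cdot)=d\phi$ for some $\phi\in C^\infty(M)$.

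The heart of the argument is that the difference is a pure gauge. Set $\Phi:=\exp\bigl(\tfrac12\,\phi\,\omega\bigr)\in\Gamma(\SO(E))$. Since $\omega$ is $\nabla^{g,E}$-parallel, $\nabla^{g,E}_X\Phi=\tfrac12\,(X\phi)\,\omega\,\Phi=\tfrac12\,g(p,X)\,\omega\,\Phi$, and as $\Phi$ commutes with $\omega$ this rearranges to $\Phi^{-1}\!\circ\nabla^{g,E}\!\circ\Phi=\nabla^{g,E}+\tfrac12\,g(p,\cdot)\,\omega=\nabla^{E}$. Hence $\Phi$ intertwines $\nabla^{g,E}$ with $\nabla^{E}$, and therefore $\Hol_x(\nabla^{E})=\Phi(x)^{-1}\,\Hol_x(\nabla^{g,E})\,\Phi(x)$. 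To conclude I would observe that $\Phi(x)=\exp\bigl(\tfrac12\,\phi(x)\,\omega_x\bigr)$ commutes with $\Hol_x(\nabla^{g,E})$: indeed $\omega$ is $\nabla^{g,E}$-parallel, so $\omega_x$ is fixed under conjugation by $\Hol_x(\nabla^{g,E})$, and hence so is any exponential of a multiple of $\omega_x$. The conjugation above is then trivial, so $\Hol_x(\nabla^{E})=\Hol_x(\nabla^{g,E})$, and passing back from the universal cover, $\hol_x(\nabla^{E})=\hol_x(\nabla^{g,E})$.

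I do not expect a genuine obstacle; the computation is routine linear algebra together with the two parallelism facts. The two points that really need attention, and which I would flag as the crux, are the reduction to simply connected $M$ — essential, because $\int_\gamma g(p,\cdot)$ can be nonzero on a non-contractible loop, so the full holonomy \emph{groups} on $M$ need not agree — and the observation that $\Phi$ commutes with the holonomy precisely because $\omega$ is holonomy-invariant. For a reader who prefers not to invoke the universal cover, the same content can be packaged as a direct comparison of parallel transports: using that $\omega$ is $\nabla^{g,E}$-parallel, one checks that $Q(t):=\tau^{\nabla^{g,E}}_{\gamma|_{[0,t]}}\circ\exp\bigl(-\tfrac12 f(t)\,\omega_x\bigr)$, with $f(t)=\int_0^t g(p_{\gamma(s)},\dot\gamma(s))\,ds$, is $\nabla^E$-parallel along $\gamma$ and equals $\id$ at $t=0$, so $\tau^{\nabla^{E}}_\gamma=\tau^{\nabla^{g,E}}_\gamma\circ\exp\bigl(-\tfrac12\bigl(\int_\gamma g(p,\cdot)\bigr)\omega_x\bigr)$; for a contractible loop the exponential is $\id$, which gives the claim directly on $M$.
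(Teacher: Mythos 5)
Your proof is correct, but it takes a genuinely different route from the paper's. The paper works on the adjoint level: it introduces the subbundle $\mathcal{A}\subset\so(E)$ of skew endomorphisms commuting with $\omega$, observes that the two induced connections coincide on $\mathcal{A}$ (the difference acts by $\tfrac12 g(p,X)\,\mathrm{ad}_\omega$, which kills $\mathcal{A}$), that the curvatures of $\nabla^E$ and $\nabla^{g,E}$ coincide and take values in $\mathcal{A}$, and then reads off equality of the holonomy algebras from the Ambrose--Singer theorem with parallel transport taken in $\mathcal{A}$. You instead work on the vector bundle $E$ itself and exhibit the two connections as gauge-equivalent via $\Phi=\exp\bigl(\tfrac12\phi\,\omega\bigr)$, with $\Phi(x)$ centralizing the holonomy because $\omega$ is holonomy-invariant; your parallel-transport formula $\tau^{\nabla^{E}}_\gamma=\tau^{\nabla^{g,E}}_\gamma\circ\exp\bigl(-\tfrac12\bigl(\int_\gamma g(p,\cdot)\bigr)\omega_x\bigr)$ is the cleanest packaging and avoids the universal cover altogether. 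Both arguments rest on the same two facts ($\omega$ parallel for both connections, $p$ parallel), but the paper's version is shorter and never needs $g(p,\cdot)$ to be closed or exact, while yours yields strictly more information: equality of the restricted holonomy \emph{groups}, plus an explicit measure (the period $\int_\gamma g(p,\cdot)$) of how the full holonomy groups can fail to agree on a non-simply-connected $M$ --- a point the paper's argument does not see.
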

\begin{proof}
	Let $\bar \nabla$ be one of the connections 	$\nabla^E$ and $\nabla^{g,E}$. Recall that the covariant derivative of a field  $F$ of endomorphisms of $E$ is given by 
	$$\bar\nabla_XF=[\bar\nabla_X,F].$$
	Let $\mathcal{A}$ be the bundle defined  in the following way: 
	$$\Gamma(\mathcal{A}) = \{ A \in \so(E) \, | \, [A, \omega] = 0 \}.$$
	It is obvious that the both connections $\nabla^{g,E}$ and $\nabla^{E}$
	preserve this bundle, and, moreover, the restrictions of these connections to $\mathcal{A}$ coincide. Also it is obvious that the curvature tensors of the connections $\nabla^{g,E}$ and $\nabla^{E}$ on $E$ coincide and take their values in $\mathcal{A}$.  
	By the Ambrose-Singer theorem on holonomy, the holonomy algebra $\hol_x(\bar\nabla)$ is spanned by the operators of the form
	$$ \tau_\gamma ^{-1} (R^{\bar\nabla}_y(X, Y)), $$
	where $\gamma$ is a piecewise smooth curve starting at $x$ with an end-point $y$, 
	$\tau_\gamma$ is the parallel transport along  $\gamma$ with respect to the connection $\bar\nabla$ in the bundle $\mathcal{A}$, and $ X,Y \in T_yM.$ The statement of the proposition is now obvious. 
\end{proof}

Recall that the subalgebra $\hol_x(\nabla^{g,E})\subset\so(n)$ is the holonomy algebra of the Levi-Civita connection on a Riemannian manifold~\cite{L07}. Next, the subalgebra $\h=\hol_x(\nabla^E)=\hol_x(\nabla^{g,E})\subset\so(n)$ annihilates the 2-from $\omega_x\in\wedge^2 E_x$. Consider the orthogonal composition
$$E_x=\ker \omega_x\oplus (\ker \omega_x)^\bot.$$
Since $\h$ annihilates $\omega$, it preserves this decomposition.
According to the de~Rham decomposition theorem \cite{Besse},
$$\h=\h_1\oplus\h_2,$$
where  $\h_1\subset \so(\ker \omega_x)$ and $\h_2\subset \so((\ker \omega_x)^\bot)$ are Riemannian holonomy algebras. Moreover, since $\omega_x|_{(\ker \omega_x)^\bot}$ is non-degenerate, it holds that $\h_1\subset\mathfrak{u}((\ker \omega_x)^\bot)$.

If the holonomy algebra $\g\subset\so(1,n+1)_{\Real p}$ is not weakly irreducible, then it is obvious that there exists a $\g$-invariant orthogonal decomposition of the tangent space
$$ T_xM=\mathR^{1, n + 1} = \mathR^{1, k + 1} \oplus \mathR^{n - k} $$
and the corresponding decomposition
$$ \g = \mathfrak{a} \oplus \mathfrak{b}$$
such that $\mathfrak{a}\subset \so(1,k+1)_{\Real p}$  is a weakly irreducible holonomy algebra of the Levi-Civita connection of a Lorentzian manifold admitting a parallel isotropic vector field, and  $\mathfrak{b}\subset\so(n-k)$ is the holonomy algebra of the  Levi-Civita connection of a Riemannian manifold.
The subalgebra $(\pr_{\so(k)}\mathfrak{a}) \oplus \mathfrak{b}\subset\so(n)$ annihilates an element from $\wedge^2\Real^n$
corresponding to the parallel bivector $\omega$ on~$E$.
Using the results from \cite{GalRMS} it is easy to see that each $R\in\R^T(\g)=\R(\g)$ is given by
\begin{align*} R(q,X)&=P(X)+p\wedge K(X),\quad X\in\Real^k,\\
R(X,Y)&=R_1(X,Y)-p\wedge (P(X)Y-P(Y)X),\quad X,Y\in\Real^{k},\\
R(X,Y)&=R_2(X,Y),\quad X,Y\in\Real^{n-k},
\end{align*} where $R_1+R_2\in\R(\h)=\R(\pr_{\so(k)}\g)\oplus\R(\fb)$,
$K:\Real^k\to\Real^k$ is a symmetric linear map, and $P\in\mathcal{P}(\pr_{\so(k)}\g)$,
where elements of   $\mathcal{P}(\pr_{\so(k)}\g)$ are linear maps from
$\Real^k$ to $\pr_{\so(k)}\g$ satisfying
$$\underset{X Y Z}{\Su}g(P(X)Y,Z)=0.$$

The following two examples show that in general the holonomy algebras of the connections $\nabla$ and $\nabla^g$ do not coincide.

\begin{ex}{\rm 
Let  $v, x^1, \dots, x^n, u$ be coordinates on $M = \mathR^{n + 2}$. Suppose that $ n = k + 2m,$ $k \geq 0$ and let $g$ be the metric given by  
$$ g = 2 dv du + \sum_{i=1}^n (dx^i)^2 + H (du)^2, $$
where $H = \sum_{j=1}^k (x^j)^2.$ It is clear that $(M,g)$ is a product of an indecomposable Cahen-Wallach space of dimension $k+2$ and of the Euclidean space of dimension $n-k=2m$. The non-zero curvature operators
are
$$R^g(\partial_u,\partial_{x^{i}})=\partial_v\wedge \partial_{x^{i}},\quad i=1,\dots,k.$$
In particular, $\hol(\nabla^g) = p \wedge \mathR^k$.

Consider the 3-vector $$ T = - 2 \partial_v \wedge \sum_{i=1}^m 
\partial_{x^{k+2i}} \wedge \partial_{x^{k+2i+1}}$$  and the connection $\nabla=\nabla^g+\frac{1}{2}T$. It is not hard to check that $\nabla T=0$. For the connection $\nabla$ it holds that $\nabla\partial_v=0$ and
$$R(\partial_u,\partial_{x^{i}} ) = \partial_v \wedge \partial_{x^{i}}, \, j = 1, \dots, n.$$
This and Proposition \ref{Proph} imply that $ \hol(\nabla) = p \wedge \mathR^n. $
}
\end{ex}

\begin{ex}{\rm 
Let $(M, g)$ be as in Example 1 with $H = \sum_{i = 1}^n (x^i)^2.$ 
It is clear that $ \hol(\nabla^g) = p \wedge \mathR^n$.
Let now $T=\partial_v \wedge \sum_{i=1}^m 
\partial_{x^{k+2i}} \wedge \partial_{x^{k+2i+1}}$ and consider  the connection $\nabla$ as above. The non-zero curvature operators of $\nabla$ are 
$$R(\partial_u,\partial_{x^{i}})=\partial_v\wedge \partial_{x^{i}},\quad i=1,\dots,k.$$ Moreover, the bivectors $\partial_v\wedge \partial_{x^{i}},$ $i=1,\dots,k$, are $\nabla$-parallel. We conclude that $ \hol(\nabla) = p \wedge \mathR^k. $
}
\end{ex}

Next we note that the following two known examples are in the scope of the present section.

\begin{ex}{\rm It is known that (see, e.g., \cite{FF09})
a flat Lorentzian geometry $(M, g, \nabla)$ with closed skew-symmetric  torsion $T$ is locally isometric to a Lie group with a bi-invariant metric. In the case when $(M, g)$ is locally indecomposable, $(M, g)$ is locally isometric either to $\SO(2,1)$ with a multiple of its Killing form or to the simply connected Lie group with the following Lie algebra:
$$ \mathfrak{d}_{2n + 2} = \mathR^{2n} \oplus \mathR \oplus \mathR, $$
with the Lie bracket given by 
$$ [(v, v^-, v^+), (w, w^-, w^+)] = (v^- J(w) - w^- J(v), 0, v \cdot J(w)), $$
where $J$ is a non-degenerate skew-symmetric endomorphism of $\mathR^{2n}.$
The Lie group is diffeomorphic to $\Real^{2n+2}$,  and the metric is given by
$$ g = 2 dv du + \sum_{i=1}^{2n} (dx^i)^2 - (J x, J x)(du)^2;$$
the torsion is equal to 
$$ T = du \wedge \omega, $$ 
where $\omega$ is the 2-form associated  to $J$.
}
\end{ex}

\begin{ex}{\rm  
The metric of a \emph{pp-wave} locally takes the form
\begin{equation} 
g = 2 dv du + \sum_{i=1}^n (dx^i)^2 + H(du)^2, 
\end{equation} 
where $H=H(x^1,\dots,x^n,u)$. A
\emph{plane wave} is a pp-wave with the function $$H=\sum_{i,j=1}^n A_{ij}(u) x^i x^j,$$ 
where $A_{ij}(u)$ is a symmetric matrix.
Regular homogeneous plane wave metrics were classified in \cite{BOL}, see also \cite{FFSPPM}. In \cite{GL} it is shown that homogeneous plane waves are reductive. 
Each regular homogeneous plane wave metric on $\Real^{n+2}$ has the form
$$  g = 2 dv du + \sum_{i=1}^n (dx^i)^2 + A(e^{-uF}x, e^{-uF}x)(du)^2,  $$
where $A$ is a symmetric bilinear form and $F$ is a constant skew-symmetric matrix. The AS-structure is defined by the 3-from
$$ T = du \wedge \omega, \quad\omega = 2\sum_{1\leq i<j\leq n} F_{ij} dx^i \wedge dx^j. $$
The curvature tensor of the corresponding connection $\nabla$
is given by $$R(\partial_u,\partial_{x^i})=
\frac{1}{4}\partial_v\wedge\Big(2(e^{-uF})^\intercal A e^{-uF} (\partial_{x^i})-F^2(\partial_{x^i})\Big),$$
see Section~\ref{SecHom} below. Since $\nabla R=0$, the holonomy algebra $\g$  of the connection $\nabla$ is given by the image of $R$ at any point. Considering a point with $u=0$, we see that $\g=p\wedge \im(2A-F^2)$.
}
\end{ex}

\section{On the reducible case}\label{SecRed}

Let $(M, g, \nabla)$ be a Lorentzian geometry with parallel skew-symmetric torsion $T$.  Let $\g\subset\so(1,n+1)$ be the holonomy algebra of the connection $\nabla$ at a point $x\in M$.

The geometry $(M, g, \nabla)$ is called \emph{reducible} if the holonomy algebra   $\g\subset\so(1,n+1)$ of the connection $\nabla$ is \emph{not} weakly irreducible, i.e., $\g$ preserves a proper non-degenerate subspace of the tangent space. It is clear that in this case there exists a non-trivial $\g$-invariant orthogonal decomposition of the tangent space
\begin{equation}\label{dec0}T_xM=L\oplus E.\end{equation}
The geometry $(M, g, \nabla)$ is called \emph{decomposable}
if the holonomy algebra   $\g\subset\so(1,n+1)$ preserves an orthogonal decomposition \eqref{dec0} such that it holds 
$$T_x\in\wedge^3 L\oplus \wedge^3 E.$$ Otherwise we say that the geometry is \emph{indecomposable}.

Recall that according  to the de~Rham-Wu Theorem (see, e.g., \cite{Besse}),  a pseudo-Riemannian manifold $(N,h)$ with  non weakly irreducible holonomy algebra is locally a product of pseudo-Riemannian manifolds of lower dimension. As it is known (see, e.g., \cite{CMS21}), such decomposition generally does not exist for connections with torsion.
Indeed, in the case of the Levi-Civita connection,  non-degenerate holonomy-invariant subspaces of the tangent space determine  (local) parallel distributions, which are involutive, and the manifold  becomes a local  product of the  integral submanifolds of these distributions. In the presence of a parallel skew-symmetric torsion $T$, 
the $\nabla$-parallel distributions defined by the $\g$-invariant subspaces $L$ and $E$ are involutive if and only if $T_x(L,L)\subset L$ and $T_x(E,E)\subset E$. If these conditions are satisfied, then  $L$ and $E$ are preserved by the holonomy algebra of the Levi-Civita connection, and by the Wu Theorem,  $(M,g)$ is locally a product of a Lorentzian manifold $(M_1,g_1)$ and  of a Riemannian manifold $(M_2,g_2)$.
The connection $\nabla$ induces metric connections on $(M_1,g_1)$ and $(M_2,g_2)$ with parallel skew-symmetric torsions $T_1$ and $T_2$ such that $T=T_1+T_2$.  Thus a geometry $(M,g,\nabla)$ is decomposable if and only if  it may be locally decomposed  as a product of two other geometries.

Let $\g$ be the holonomy algebra of a geometry with parallel skew torsion, and let~$W$ be a $\g$-invariant subspace of the tangent space.
In terminology of \cite{CMS21} the subspace $W$ (and the corresponding parallel distribution) is called \emph{horizontal} if $\g\cap\so(W)\neq 0$, and it is called \emph{vertical} otherwise.  

Let us formulate one of the main results from \cite{CMS21} (see Theorem 4.8 and Remark 3.15 in \cite{CMS21}).
Suppose that $(M, g, T)$ is a pseudo-Riemannian geometry with parallel skew-symmetric torsion such that its holonomy algebra preserves the orthogonal decomposition of the tangent space
$$T_xM=F_1\oplus F_2$$ and such that the torsion $T$ satisfies
\begin{equation}\label{TV1V2} T\in \wedge^3 F_1\oplus\wedge^3 F_2\oplus (\wedge^2 F_2\,\wedge F_1).
\end{equation} 
Then the vector  space $F_1$ determines a (local) parallel and involutive distribution, i.e., we get a foliation $\mathfrak{F}$ on $M$
and the projection to the leaf space appears
$$M\to M/\mathfrak{F}.$$
The space $M/\mathfrak{F}$ is locally a smooth manifold with a canonically induced geometry with parallel skew-symmetric torsion, i.e.,
the metric $g$ and the component of the torsion from $\wedge^3 F_2$ are projectable to the base $M/\mathfrak{F}$. Moreover,  $M/\mathfrak{F}$  locally admits a geometry with parallel curvature in the sense of  \cite[Definition 4.7]{CMS21}, this may be expressed as a specific structure on a local principal bundle over $M/\mathfrak{F}$ satisfying certain technical conditions. Conversely, \cite[Theorem 5.1]{CMS21} provides a construction allowing to locally reconstruct  the initial geometry on $M$ from the local geometry with parallel curvature on $M/\mathfrak{F}$.

\section{A construction}\label{secconstr}

In this section we give a construction that will be used later in order to provide examples of geometries with special holonomy. At the end of the section we explain the relation of this construction to constructions from \cite{CMS21} and \cite{Storm3}.

Let  $\nabla^0$ be a metric connection with a parallel skew-symmetric torsion $T_0$ on a pseudo-Riemannian manifold  $(M_0,g_0)$ of signature $(r,s)$. Let $\fb_0\subset\so(r,s)$ be the holonomy algebra of the connection $\nabla^0$ at a point $x_0\in M_0$.
Let $\n\subset\so(r,s)$ be a commutative subalgebra commuting with $\fb_0$ and such that  $\sigma \cdot T_0 = 0$ for all $\sigma \in \n$. Denote by $k$ the dimension of $\n$. We fix an arbitrary basis $\sigma_1,\dots,\sigma_k$  of $\n$. Denote by $L_0$ a copy of the vector space $\n$. Let $h$ be an arbitrary pseudo-Euclidean metric on $L_0$ of signature $(r_0,s_0)$.
Let $V_1,\dots,V_k$ be an orthonormal basis of $L_0$ (i.e., $h(V_i,V_j)=\epsilon_i\delta_{ij},$ $\epsilon_i=\pm 1$).
Since  $\n$ commutes with $\fb_0$, the elements $\sigma_i$ determine $\nabla^0$-parallel fields of endomorphisms on $M_0$, which we denote by the same symbols. The vectors $V_1,\dots,V_k$ will be considered as the constant vector fields on the manifold $L_0$.

\begin{lem}\label{parallel2form} Let $(N,g,\nabla)$ be a pseudo-Riemannian manifold with a metric connection $\nabla$ with a parallel skew-symmetric torsion $T$. Let $\sigma$ be a $\nabla$-parallel 2-form on $N$. Then the form $\sigma$ is closed if and only if $\sigma \cdot T = 0. $ 
\end{lem}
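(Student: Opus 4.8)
The plan is to compute $d\sigma$ directly in terms of $\nabla$ and $T$, using the standard fact that for any $k$-form $\sigma$ and any torsion-free comparison one has $d\sigma = \sum (-1)^i (\nabla^g_{X_i}\sigma)(\ldots)$, but here I would instead use the formula relating the exterior derivative to an \emph{arbitrary} metric connection with torsion. Concretely, for a $2$-form $\sigma$ and a metric connection $\nabla$ with torsion tensor $T$, one has
\begin{equation*}
d\sigma(X,Y,Z) = \underset{XYZ}{\Su}\Big( (\nabla_X\sigma)(Y,Z) - \sigma(T(X,Y),Z) \Big).
\end{equation*}
This identity follows from the coordinate-free expression for $d\sigma$ together with $\nabla_X Y - \nabla_Y X - [X,Y] = -T(X,Y)$ (sign conventions to be matched to those fixed in Section~\ref{SecPrel}); I would verify it in one line by expanding both sides.

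Next I would invoke the hypothesis $\nabla\sigma = 0$, which kills the first group of terms, leaving
\begin{equation*}
d\sigma(X,Y,Z) = -\underset{XYZ}{\Su}\,\sigma(T(X,Y),Z).
\end{equation*}
So it remains to identify the right-hand side with $\sigma\cdot T$ up to a nonzero constant. Here I would use the identification of bivectors with skew-symmetric endomorphisms and the definition of the action of a $2$-form (equivalently, an element of $\so$) on tensors: $(\sigma\cdot T)(X,Y,Z) = -T(\sigma X, Y, Z) - T(X,\sigma Y, Z) - T(X, Y, \sigma Z)$. Using the total skew-symmetry of $T$ (it is a $3$-form) and the skew-symmetry of $\sigma$, one rewrites each term $\sigma(T(X,Y),Z)$ via $g(\sigma T(X,Y), Z) = -g(T(X,Y),\sigma Z) = -T(X,Y,\sigma Z)$; cyclically summing and comparing with the displayed expression for $\sigma\cdot T$ shows $\underset{XYZ}{\Su}\,\sigma(T(X,Y),Z) = -(\sigma\cdot T)(X,Y,Z)$ (again modulo a sign to be pinned down). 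Hence $d\sigma = \pm\,\sigma\cdot T$, and in particular $d\sigma = 0$ if and only if $\sigma\cdot T = 0$.

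The only real subtlety — and the step I would be most careful about — is bookkeeping of signs and conventions: the paper's sign convention for the torsion, the convention $(X\wedge Y)Z = g(X,Z)Y - g(Y,Z)X$, and the convention for the action of $\so(r,s)$ on forms all feed into the constant relating $\underset{XYZ}{\Su}\,\sigma(T(X,Y),Z)$ and $(\sigma\cdot T)(X,Y,Z)$. Since only the vanishing is asserted, the precise constant is immaterial, so I would present the computation cleanly with the conventions of Section~\ref{SecPrel} and simply note that the two cyclic sums agree up to sign. An alternative, perhaps cleaner, route is to use $\nabla = \nabla^g + \tfrac12 T$ together with the identity $\nabla^g\sigma = d\sigma$-type formula for forms: from $\nabla\sigma = 0$ we get $\nabla^g_X\sigma = -\tfrac12 (T(X)\cdot\sigma)$, and then $d\sigma = \sum (-1)^i \nabla^g_{e_i}\sigma(\ldots)$ directly produces a multiple of $\sigma\cdot T$ after antisymmetrization; I would pick whichever of these two computations matches the notation already set up in the paper.
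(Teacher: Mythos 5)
Your proposal is correct and follows essentially the same route as the paper: express $d\sigma$ via the connection with torsion, use $\nabla\sigma=0$ to kill the covariant-derivative terms, and identify the remaining cyclic sum $\underset{XYZ}{\Su}\,\sigma(T(X,Y),Z)$ with $\sigma\cdot T$ through $\sigma(T(X,Y),Z)=-T(X,Y,\sigma Z)$. The only discrepancy is the sign in front of the torsion terms in the exterior-derivative formula (the paper has $+\sigma(T(X,Y),Z)$ and concludes $d\sigma=\sigma\cdot T$ exactly), which, as you note, is immaterial for the equivalence of the two vanishing conditions.
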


{\bf Proof.} Using the formulas for the exterior and covariant derivatives of a 2-form, we get the equality
\begin{multline*}
d\sigma(X,Y,Z) = (\nabla_X\sigma)(Y,Z)-(\nabla_Y\sigma)(X,Z)+ (\nabla_Z\sigma)(X,Y)   \\
+ \sigma(T(X,Y),Z) - \sigma(T(X,Z), Y) + \sigma(T(Y,Z),X). 
\end{multline*}
Next,
$$ \sigma(T(X,Y),Z) = g(\sigma T(X,Y), Z) = - g(T(X, Y), \sigma Z ) = -T(X, Y, \sigma Z).$$
Since $\nabla \sigma=0$, we obtain
$$d\sigma=\sigma\cdot T.$$ This proves the lemma.
\qed	

In what follows we assume that the 2-forms $\sigma_i$ are exact and we fix  1-forms $\kappa_i$ such that $$d\kappa_i=\sigma_i$$ (the indices $i,j$ will take the values $1,\dots,k$).

Consider the product $$M=M_0\times L_0.$$ The vector fields on $M_0$ and $L_0$ will be considered as vector fields on $M$. The letters $ W,X, Y, Z$ will denote vector fields on $M_0.$ 
Let $\mathcal{L}$ be the distribution on $M$ linearly generated by the vector fields of the form $$ \overline X = X-\sum_i\epsilon_i \kappa_i(X)V_i.$$
Let $\mathcal{L}_0$ be the distribution on $M$ linearly generated by the vector fields $V_1, \dots, V_k.$ 	  
The following equality holds
\begin{equation}\label{commutatorEquality}
[ \overline{X}, \overline{Y} ] = \overline{[X, Y]} - \sum_i \epsilon_i \sigma_i(X, Y)V_i .
\end{equation}

Each field  of endomorphisms $A$ on $M_0$ defines a field  of endomorphisms  $\overline A$ of $\mathcal{L}$ by the equality $$ \overline{A}(\overline{X}) = \overline{A(X)} .$$
We will extend $\overline{A}$ to $TM$ by setting $\overline A (V_i) = 0. $

Let us define a metric $g$ on $M$ such that $ \mathcal{L} $ is orthogonal to $ \mathcal{L}_0 $ and 
$$ g(\overline X, \overline Y) = g_0(X, Y), \quad g(V_i, V_j) = h(V_i,V_j). $$

Next we define a 3-form $T$ on $M$ by the following conditions:
\begin{align*}
T(\overline X, \overline Y, \overline Z) &= T_0(X, Y, Z), \\
T(\overline X, \overline Y, V_j) &= \sigma_j(X, Y), \\
T(V_i, V_j, \cdot ) &= 0. 
\end{align*}

Finally we define a connection $\nabla$ on $M$ to be the unique $g$-metric connection with the skew-symmetric torsion $T:$
\begin{equation} \label{connectionNabla}
\nabla = \nabla^g + \frac{1}{2}T .
\end{equation}

\begin{lem}
	\label{lemmaDistributionIsParallel}
	The distribution $\mathcal{L}$ is parallel with respect to the connection $\nabla.$
\end{lem}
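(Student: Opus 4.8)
The plan is to show that $\nabla_U \overline{X}$ is a section of $\mathcal{L}$ for every vector field $U$ on $M$ and every vector field $X$ on $M_0$, and that $\nabla_U V_i$ is a section of $\mathcal{L}$ as well — equivalently, since $\mathcal{L}_0 \subset \mathcal{L}$, it suffices to handle the $\overline{X}$ generators. Because $\nabla$ is a metric connection and $\mathcal{L}$ and $\mathcal{L}_0$ together span $TM$ with $\mathcal{L}^\perp \cap \mathcal{L} = \mathcal{L}_0$ (the metric restricted to $\mathcal{L}$ being nondegenerate, isometric to $g_0$), parallelism of $\mathcal{L}$ is equivalent to parallelism of its orthogonal complement $\mathcal{L}_0 = \mathcal{L}^\perp$; so I will in fact aim to prove that $\mathcal{L}_0$ is $\nabla$-parallel, which is a cleaner computation since $\mathcal{L}_0$ is spanned by the constant vector fields $V_1,\dots,V_k$.

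First I would compute the Levi-Civita connection $\nabla^g$ of $g$ in terms of the frame $\{\overline{X}, V_i\}$ using the Koszul formula. The key structural inputs are: the metric pairings $g(\overline X,\overline Y)=g_0(X,Y)$, $g(V_i,V_j)=h(V_i,V_j)$, $g(\overline X, V_i)=0$, all of which are "constant" in the $L_0$-directions; the bracket relation \eqref{commutatorEquality}, namely $[\overline X,\overline Y]=\overline{[X,Y]}-\sum_i\epsilon_i\sigma_i(X,Y)V_i$; and $[\overline X, V_i]=0$, $[V_i,V_j]=0$. Feeding these into Koszul, one finds that $\nabla^g_{\overline X}\overline Y = \overline{\nabla^{g_0}_X Y} - \tfrac12\sum_i\epsilon_i\sigma_i(X,Y)V_i + (\text{terms coming from } g(V_i,[\overline X,\overline Y]))$; the point is that every Koszul term either stays inside $\mathcal{L}$ (via the $\overline{\phantom{X}}$ of an $M_0$-object) or produces a $V_i$-component proportional to $\sigma_i(X,Y)$, while $\nabla^g_{\overline X}V_i$ and $\nabla^g_{V_i}V_j$ are computed to vanish (all relevant brackets and metric coefficients being zero or constant). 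Thus the only obstruction to $\mathcal{L}_0$-parallelism of $\nabla^g$ is concentrated in the $\sigma_i$-terms of $\nabla^g_{\overline X}\overline Y$, and symmetrically these are precisely cancelled against the torsion: by definition $T(\overline X,\overline Y,V_j)=\sigma_j(X,Y)$, so $\tfrac12 T(\overline X,\overline Y)$ has $V_j$-component $\tfrac12\epsilon_j\sigma_j(X,Y)$, which combines with the $-\tfrac12\epsilon_j\sigma_j(X,Y)V_j$ from $\nabla^g$. Hence $\nabla_{\overline X}\overline Y\in\Gamma(\mathcal L)$. Then one checks $\nabla_{\overline X}V_i$ and $\nabla_{V_i}(\text{anything})$ stay in $\mathcal L$ using $T(V_i,V_j,\cdot)=0$ and $T(\overline X,\overline Y,V_j)=\sigma_j(X,Y)$ together with the vanishing of the corresponding $\nabla^g$-terms: $g(\nabla_{\overline X}V_i,\overline Y)=-g(V_i,\nabla_{\overline X}\overline Y)$ is zero because $\nabla_{\overline X}\overline Y\in\Gamma(\mathcal L)$ is $\mathcal L_0$-orthogonal... wait — rather $g(\nabla_{\overline X}V_i, \overline Y) = \overline X\, g(V_i,\overline Y) - g(V_i,\nabla_{\overline X}\overline Y) = 0$ since the first term vanishes ($g(V_i,\overline Y)\equiv 0$) and the second vanishes as $\nabla_{\overline X}\overline Y$, being a section of $\mathcal L$, is orthogonal to $\mathcal L_0 \ni V_i$; similarly the $V_j$-component of $\nabla_{\overline X}V_i$ is controlled by $g(\nabla_{\overline X}V_i,V_j)$, which by metricity equals $-g(V_i,\nabla_{\overline X}V_j)$, forcing it to be antisymmetric in $i,j$, and it equals $\tfrac12 T(\overline X, V_i, V_j)=0$ plus a $\nabla^g$-piece that one checks vanishes. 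This shows $\nabla_U V_i\in\Gamma(\mathcal L_0)\subset\Gamma(\mathcal L)$ for all $U$, i.e. $\mathcal L_0$ is $\nabla$-parallel, and dually $\mathcal L=\mathcal L_0^{\perp}$ is $\nabla$-parallel.

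The main obstacle, and the one step I would be most careful with, is the bookkeeping in the Koszul formula for $\nabla^g_{\overline X}\overline Y$: one must verify that the extra terms $g(V_i,[\overline X,\overline Y])$, $g([\overline Z,\overline X],\overline Y)$, etc., assemble exactly into $-\tfrac12\sum_i\epsilon_i\sigma_i(X,Y)V_i$ plus something inside $\mathcal L$, with no stray $V_i$-contribution of the wrong coefficient — this uses that $\sigma_i$ is a 2-form (so $\sigma_i(X,Y)$ is antisymmetric, matching the antisymmetry of $T(\overline X,\overline Y,V_i)$) and that the $\overline{\phantom{X}}$-correspondence is metric-preserving. An alternative, possibly cleaner, route is to invoke Lemma \ref{parallel2form}: each $\sigma_i$, lifted appropriately, is a closed $\nabla^0$-parallel $2$-form with $\sigma_i\cdot T_0=0$, and one can argue directly at the level of the defining data that the mixed torsion component $\sigma_i\wedge V^i$ is exactly the obstruction term produced by the twist $\overline X = X - \sum_i\epsilon_i\kappa_i(X)V_i$, so that $\nabla$ "untwists" the distribution; but I expect the direct Koszul computation above to be the most transparent.
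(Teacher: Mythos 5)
Your proof is correct and follows essentially the same route as the paper: the heart of the matter is the Koszul computation $2g(\nabla^g_{\overline X}\overline Y,V_i)=g([\overline X,\overline Y],V_i)=-\sigma_i(X,Y)$, cancelled exactly by the torsion contribution $\tfrac12 T(\overline X,\overline Y,V_i)=\tfrac12\sigma_i(X,Y)$, together with the trivial check in the $V_j$-directions, and passing through $\mathcal{L}_0=\mathcal{L}^\perp$ is only a cosmetic repackaging of the same computation via metricity. Do clean up three slips that do not affect the logic: $\mathcal{L}_0$ is the orthogonal \emph{complement} of $\mathcal{L}$, not a subbundle of it (so $\mathcal{L}^\perp\cap\mathcal{L}=0$, not $\mathcal{L}_0$), and $\nabla^g_{\overline X}V_i$ does \emph{not} vanish --- its $\mathcal{L}$-component is $\tfrac12\overline{\sigma_i(X)}$ and is killed only after adding the torsion term, consistently with Lemma \ref{lemmaConnectionFormulas}.
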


\begin{proof}
	First we will prove that $g( \nabla_{\overline X}\overline{Y}, V_i) = 0$ for all $  V_i$ and for all $X, Y \in \Gamma(TM_0).$
		Applying the Koszul formula and  (\ref{commutatorEquality}), we get:
	\begin{multline*}
	2g( \nabla^g_{\overline X}\overline{Y}, V_i)=\,\overline X(g(\overline Y,V_i))+\overline Y(g(\overline X,V_i))- V_i(g(\overline X,\overline Y)) \\
	\quad +\,g([\overline X, \overline Y],V_i)-g([\overline X,V_i],\overline Y)-g([\overline Y,V_i],\overline X) \\
	= g([\overline X, \overline Y],V_i) = -\sigma_i(X, Y).
	\end{multline*}
	
	Therefore, 
	$$2g( \nabla_{\overline X}\overline{Y}, V_i) = -\sigma_i(X, Y) + T(\overline X, \overline Y, V_i) =  0. $$
	
	Similarly it holds that $g(\nabla_{V_j} \overline Y, V_i) = 0. $
	This proves the lemma.
	
\end{proof}

\begin{lem}
	\label{lemmaConnectionFormulas}
	The connection $\nabla$ is given by the following equalities:
	\begin{align*}
	\nabla_{\overline X} \overline Y &= \overline{\nabla^0_X Y}, \\
	\nabla_{V_j} \overline X &= \overline{ \sigma_j (X)}, \\
	\nabla V_j &= 0.
	\end{align*}

\end{lem}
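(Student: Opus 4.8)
The plan is to compute $\nabla$ on each pair of basis-type vector fields using $\nabla = \nabla^g + \frac{1}{2}T$, where the Levi-Civita part is extracted from the Koszul formula and the torsion part is read off from the defining conditions on $T$. By Lemma \ref{lemmaDistributionIsParallel} the distribution $\mathcal{L}$ is $\nabla$-parallel, so $\nabla_{\overline X}\overline Y$ and $\nabla_{V_j}\overline X$ are sections of $\mathcal{L}$; hence each is of the form $\overline{(\cdot)}$ for a unique vector field on $M_0$, and it suffices to test against fields $\overline Z$. Since $g(\overline{(\cdot)},\overline Z) = g_0(\cdot,Z)$, the computation reduces to an identity between vector fields on $M_0$.

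First I would handle $\nabla_{\overline X}\overline Y$. Apply the Koszul formula on $M$ to the triple $(\overline X,\overline Y,\overline Z)$; the term-by-term structure mirrors the Koszul formula on $M_0$, except that the bracket term picks up the correction from \eqref{commutatorEquality}. All the $\sigma_i(\cdot,\cdot)V_i$ corrections land in $\mathcal{L}_0 = \mathcal{L}^\perp$ and thus contribute nothing when paired with $\overline Z$. This yields $2g(\nabla^g_{\overline X}\overline Y,\overline Z) = 2g_0(\nabla^{g_0}_X Y, Z)$, so $\nabla^g_{\overline X}\overline Y = \overline{\nabla^{g_0}_X Y}$ modulo $\mathcal{L}_0$, but Lemma \ref{lemmaDistributionIsParallel} already shows the $\mathcal{L}_0$-component of $\nabla_{\overline X}\overline Y$ vanishes, and the $\mathcal{L}_0$-component of $\frac12 T(\overline X,\overline Y,\cdot)$ is $\frac12\sigma_i(X,Y)$; combining, $\nabla_{\overline X}\overline Y = \overline{\nabla^{g_0}_X Y} + \frac12 \overline{T_0(X,Y)} = \overline{\nabla^0_X Y}$, using $\nabla^0 = \nabla^{g_0} + \frac12 T_0$.

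Next, for $\nabla_{V_j}\overline X$: pair with $\overline Z$ and use the Koszul formula for the triple $(V_j,\overline X,\overline Z)$. The directional-derivative terms vanish because $g(V_j,\overline Z)$, $g(\overline X,\overline Z) = g_0(X,Z)$ are $L_0$-independent and $V_j(g_0(X,Z)) = 0$; the bracket terms are $g([V_j,\overline X],\overline Z) = 0$ (the $V_j$ commute with the $\overline X$ since the $\kappa_i$ are pulled back from $M_0$), $-g([V_j,\overline Z],\overline X) = 0$, and $g([\overline X,\overline Z],V_j) = -\sigma_j(X,Z)$. Hence $2g(\nabla^g_{V_j}\overline X,\overline Z) = -\sigma_j(X,Z)$, while $T(V_j,\overline X,\overline Z) = -T(\overline X,\overline Z,V_j) = -\sigma_j(X,Z)$ wait—$T(\overline X,\overline Z,V_j)=\sigma_j(X,Z)$ so $T(V_j,\overline X,\overline Z)=\sigma_j(X,Z)$; adding, $2g(\nabla_{V_j}\overline X,\overline Z) = -\sigma_j(X,Z) + \sigma_j(X,Z)\cdot$? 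Here I must be careful with the sign: writing $\sigma_j$ as the skew endomorphism with $g_0(\sigma_j(X),Z) = \sigma_j(X,Z)$, the two contributions combine to give $\nabla_{V_j}\overline X = \overline{\sigma_j(X)}$ after fixing conventions consistently. Finally $\nabla V_j = 0$ follows from $g(\nabla_{V_i}V_j,V_\ell) = 0$ (Koszul: all terms vanish, $T(V_i,V_j,\cdot)=0$) together with the already-established $g(\nabla_{\overline X}V_j,\overline Z) = -g(V_j,\nabla_{\overline X}\overline Z) = 0$ and $g(\nabla_{V_i}V_j,\overline Z) = -g(V_j,\nabla_{V_i}\overline Z) = -g(V_j,\overline{\sigma_i(Z)}) = 0$, using metric-compatibility of $\nabla$.

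The only real subtlety—and the step I would be most careful with—is keeping the sign conventions straight between the identification of a bivector with a skew endomorphism, the definition $T(X,Y,Z) = g(T(X,Y),Z)$, and the $d\kappa_i = \sigma_i$ normalization, so that the $-\sigma_j(X,Z)$ from Koszul and the torsion contribution combine with the correct relative sign to produce $+\overline{\sigma_j(X)}$ rather than its negative. Once the conventions are pinned down as in the Preliminaries, the rest is the routine Koszul bookkeeping sketched above.
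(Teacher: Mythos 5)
Your overall strategy is exactly the paper's: use Lemma \ref{lemmaDistributionIsParallel} to reduce everything to pairings with $\overline Z$, split $\nabla=\nabla^g+\tfrac12 T$, and evaluate the Levi-Civita part with the Koszul formula together with \eqref{commutatorEquality}. The first identity is fine, and your derivation of $\nabla V_j=0$ from metric compatibility is a legitimate (slightly slicker) variant of the paper's direct Koszul computation, although you should also dispose of the $\mathcal{L}_0$-component $g(\nabla_{\overline X}V_j,V_\ell)$, which a one-line Koszul computation (all brackets among $V_\ell$, $\overline X$ vanish or are $\mathcal{L}_0$-orthogonal, and $T(V_j,V_\ell,\cdot)=0$) shows to be zero.

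The problem is the second identity, where you flag a sign issue and then leave it unresolved; as written, your computation actually produces the wrong answer. You correctly record $g([\overline X,\overline Z],V_j)=-\sigma_j(X,Z)$ (by \eqref{commutatorEquality} the $\mathcal{L}_0$-component of $[\overline X,\overline Z]$ is $-\sum_i\epsilon_i\sigma_i(X,Z)V_i$), but this term enters the Koszul formula with a minus sign, so
\begin{equation*}
2g(\nabla^g_{V_j}\overline X,\overline Z)=-g([\overline X,\overline Z],V_j)=+\sigma_j(X,Z),
\end{equation*}
not $-\sigma_j(X,Z)$ as you state. Since $T(V_j,\overline X,\overline Z)=T(\overline X,\overline Z,V_j)=\sigma_j(X,Z)$, the two contributions \emph{add} to give $2g(\nabla_{V_j}\overline X,\overline Z)=2\sigma_j(X,Z)=2g(\overline{\sigma_j(X)},\overline Z)$, which is the paper's computation. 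Had your value $-\sigma_j(X,Z)$ for the Levi-Civita part been correct, the two terms would cancel and you would conclude $\nabla_{V_j}\overline X=0$, which is false. So this sign is not a matter of convention to be "pinned down later": it is the decisive step of the lemma (it is precisely where the holonomy contribution $\sigma_j$ enters), and the proof is incomplete until it is carried out.
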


\begin{proof} The first equality follows from (\ref{connectionNabla}) and Lemma \ref{lemmaDistributionIsParallel}:
	
	\begin{multline*}
	2g(\nabla_{\overline X} \overline Y, \overline Z) = 2g_0(\nabla^{g_0}_X Y, Z) + T(\overline X, \overline Y, \overline Z) = 2g_0(\nabla^{g_0}_X Y + \frac{1}{2}T_0(X, Y), Z)\\
	= 2g_0(\nabla^0_X Y, Z) = 2g(\overline{\nabla^0_X Y}, \overline Z).
	\end{multline*}
	
	We prove the second equality in the same way:
	\begin{multline*}
	2g(\nabla_{V_j} \overline X, \overline Z) = 2g(\nabla^g_{V_j} \overline X, \overline Z) + T(V_j, \overline X, \overline Z)\\
	= V_j(g(\overline X,\overline{Z}))+\overline X(g(V_j,\overline{Z}))- \overline{Z}(g(V_j,\overline X))
	\\+ g([V_j, \overline X],\overline{Z})-g([V_j,\overline{Z}],\overline X)-g([\overline X,\overline{Z}],V_j) + \sigma_j(X, Z)\\
	=  -g([\overline X,\overline{Z}],V_j) + \sigma_j(X, Z) = 2\sigma_j(X, Z) = 2 g( \overline{ \sigma_j(X)}, \overline Z).
		\end{multline*}
	The equality $\nabla_{V_i} V_j = 0 $ follows easily from the Koszul formula.
	Finally, 
	\begin{multline*}
	2g(\nabla_{\overline{X}} V_j, \overline Z) = 2g(\nabla^g_{\overline{X}} V_j, \overline Z) + T(\overline{X}, V_j, \overline Z) \\
	= \overline{X}(g(V_j,\overline{Z}))+V_j(g(\overline{X},\overline{Z}))- \overline{Z}(g(\overline{X},V_j)) 
	\\+ g([\overline{X}, V_j],\overline{Z})-g([\overline{X},\overline{Z}],V_j)-g([V_j,\overline{Z}],\overline{X}) - \sigma_j(X, Z) \\
	= -g([\overline{X},\overline{Z}],V_j) - \sigma_j(X, Z) = 0.
	\end{multline*}
	This completes the proof of the lemma.
\end{proof}

\begin{lem}
	The torsion tensor $T$ of the connection $\nabla$ is parallel.
\end{lem}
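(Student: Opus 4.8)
The plan is to verify $\nabla T = 0$ by computing $(\nabla_U T)(A,B,C)$ for $U$ ranging over the spanning vector fields $\overline{W}$ and $V_\ell$, and $A,B,C$ ranging over $\overline{X},\overline{Y},\overline{Z}$ and $V_i,V_j$. Since $\nabla$ is metric and all the relevant fields span $TM$, it suffices to check these cases. Throughout I will use Lemma \ref{lemmaConnectionFormulas}, which expresses $\nabla$ entirely in terms of $\nabla^0$ and the endomorphisms $\sigma_j$, together with the defining formulas for $T$ and the hypotheses $\nabla^0 T_0 = 0$, $[\n,\fb_0]=0$, $\sigma_i\cdot T_0=0$, and $[\sigma_i,\sigma_j]=0$.

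The first case is $U = \overline{W}$. Using $(\nabla_{\overline W} T)(A,B,C) = \overline W\big(T(A,B,C)\big) - T(\nabla_{\overline W}A,B,C) - \dots$, and noting that $\nabla_{\overline W}$ sends $\overline Y \mapsto \overline{\nabla^0_W Y}$ and kills $V_j$, every term with all three arguments of the form $\overline{\cdot}$ reduces to $(\nabla^0_W T_0)(X,Y,Z) = 0$; the term $T(\overline X,\overline Y, V_j) = \sigma_j(X,Y)$ becomes $\overline W(\sigma_j(X,Y)) - \sigma_j(\nabla^0_W X, Y) - \sigma_j(X,\nabla^0_W Y) = (\nabla^0_{W}\sigma_j)(X,Y) = 0$ since $\sigma_j$ is $\nabla^0$-parallel (it commutes with $\fb_0$); and the terms with two or three $V$'s vanish because $T(V_i,V_j,\cdot)=0$ is constant and $\nabla_{\overline W} V_j = 0$. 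The second case is $U = V_\ell$. Here $\nabla_{V_\ell}$ sends $\overline X \mapsto \overline{\sigma_\ell(X)}$ and kills $V_j$, and all coefficient functions $T_0(X,Y,Z)$, $\sigma_j(X,Y)$ are constant along the $L_0$-directions (they are pulled back from $M_0$), so $V_\ell$ applied to them vanishes. Thus $(\nabla_{V_\ell} T)(A,B,C)$ is a sum of terms of the form $-T(\overline{\sigma_\ell(X)}, B, C) - \dots$. For arguments $\overline X,\overline Y,\overline Z$ this gives $-\big(T_0(\sigma_\ell X, Y, Z) + T_0(X,\sigma_\ell Y, Z) + T_0(X,Y,\sigma_\ell Z)\big) = -(\sigma_\ell\cdot T_0)(X,Y,Z) = 0$ by hypothesis; for arguments $\overline X,\overline Y, V_j$ it gives $-\big(\sigma_j(\sigma_\ell X, Y) + \sigma_j(X, \sigma_\ell Y)\big) = -(\sigma_\ell\cdot\sigma_j)(X,Y)$, which vanishes because $\sigma_\ell$ and $\sigma_j$ commute as skew-symmetric endomorphisms (equivalently $\sigma_\ell \cdot \sigma_j = 0$ as the action on the $2$-form); and for arguments involving two or more $V$'s it vanishes since $T(V_i,V_j,\cdot)=0$.

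Collecting the cases shows $\nabla T = 0$. The only mildly delicate point — the "main obstacle," such as it is — is bookkeeping: making sure that when a $V_j$ appears as one of the three arguments of $T$ and $\nabla$ acts by an endomorphism that could in principle move things in or out of $\mathcal L_0$, nothing is missed. This is controlled by the fact that $\overline{A}(V_i) = 0$ by construction and $\nabla V_j = 0$ by Lemma \ref{lemmaConnectionFormulas}, so $V_j$-arguments are simply inert under $\nabla$, and the identity $\sigma_i \cdot \sigma_j = [\sigma_i,\sigma_j]$ acting on forms (so that commutativity of $\n$ is exactly what is needed) takes care of the one genuinely nonzero-looking term. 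I would present the computation as the two displayed cases above, remarking at the start that metricity plus spanning reduces the check to these.
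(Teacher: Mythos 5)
Your proof is correct and follows essentially the same route as the paper, namely a direct case-by-case computation of $(\nabla_U T)(A,B,C)$ using the explicit formulas for $\nabla$ from Lemma \ref{lemmaConnectionFormulas} together with $\nabla^0 T_0=0$, $\nabla^0\sigma_j=0$, $\sigma_\ell\cdot T_0=0$ and the commutativity of $\n$. In fact the paper only displays one representative case (the one with arguments $V_j,\overline Y,\overline Z$ differentiated along $\overline W$) and leaves the rest to the reader, so your write-up is simply a more complete version of the same argument; the minor sign ambiguity in identifying $\sigma_\ell\cdot\sigma_j$ with $\pm[\sigma_\ell,\sigma_j]$ is immaterial since the bracket vanishes.
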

\begin{proof}
	The lemma can be proved by direct computations like the following one: 
	
	\begin{multline*}
	(\nabla_{\overline W} T) (V_j, \overline Y, \overline Z) = {\overline W} ( T(V_j, \overline Y, \overline Z) ) \\- T(\nabla_{\overline W} V_j, \overline Y, \overline Z) - T(V_j, \nabla_{\overline W} \overline Y, \overline Z) - T(V_j, \overline Y, \nabla_{\overline W} \overline Z)\\
	= W (\sigma_j (Y, Z)) - \sigma_j(\nabla^0_W Y, Z) - \sigma_j(Y, \nabla^0_W Z) = (\nabla^0_W \sigma_j) (Y, Z) = 0.
	\end{multline*}
	
\end{proof}

\begin{lem}
	\label{curvatureTensor}
	The curvature tensor $R$ of $\nabla$ satisfies the following conditions:
	\begin{equation}
	\label{Rconstr}
	 R(\overline X, \overline{Y}) = \overline{R_0(X, Y)} + \sum_{i}\epsilon_i \sigma_i(X,Y)\overline{\sigma_i}, \quad R(V_i, \cdot) = 0. \end{equation}
\end{lem}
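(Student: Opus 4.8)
The plan is to compute the curvature directly from the connection formulas in Lemma \ref{lemmaConnectionFormulas}, exploiting the fact that $\nabla V_j = 0$ and that the $\sigma_i$ are mutually commuting $\nabla^0$-parallel endomorphisms. First I would dispose of the pieces involving $\mathcal L_0$: since $\nabla V_j = 0$ identically, we get $R(X,Y)V_j = 0$ for all vector fields $X,Y$, and since $\nabla_{V_i}$ acts on sections of $\mathcal L$ as the parallel endomorphism $\overline{\sigma_i}$ (a constant in the relevant sense — it is $\nabla$-parallel), the operators $\nabla_{V_i}$ and $\nabla_{V_j}$ commute and $[V_i,V_j]=0$, giving $R(V_i,V_j) = 0$; similarly $R(V_i,\overline X) = [\nabla_{V_i},\nabla_{\overline X}] - \nabla_{[V_i,\overline X]}$, and using $[V_i,\overline X]=0$, $\nabla_{\overline X}\overline Y = \overline{\nabla^0_X Y}$, $\nabla_{V_i}\overline Y = \overline{\sigma_i(Y)}$, this reduces to $\overline{[\sigma_i,\nabla^0_X](Y)} = \overline{(\nabla^0_X\sigma_i)(Y)} = 0$ because $\sigma_i$ is $\nabla^0$-parallel. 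This establishes $R(V_i,\cdot) = 0$.

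Next I would compute $R(\overline X,\overline Y)$. Write
$$
R(\overline X,\overline Y) = \nabla_{\overline X}\nabla_{\overline Y} - \nabla_{\overline Y}\nabla_{\overline X} - \nabla_{[\overline X,\overline Y]},
$$
and apply it to a generating field $\overline Z$. By \eqref{commutatorEquality}, $[\overline X,\overline Y] = \overline{[X,Y]} - \sum_i\epsilon_i\sigma_i(X,Y)V_i$, so
$$
\nabla_{[\overline X,\overline Y]}\overline Z = \overline{\nabla^0_{[X,Y]}Z} - \sum_i\epsilon_i\sigma_i(X,Y)\,\overline{\sigma_i(Z)},
$$
using $\nabla_{V_i}\overline Z = \overline{\sigma_i(Z)}$. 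The first two terms give $\nabla_{\overline X}\overline{\nabla^0_Y Z} - \nabla_{\overline Y}\overline{\nabla^0_X Z} = \overline{\nabla^0_X\nabla^0_Y Z} - \overline{\nabla^0_Y\nabla^0_X Z}$. Combining, the $M_0$-parts assemble into $\overline{R_0(X,Y)Z}$ and the remaining terms are exactly $\sum_i\epsilon_i\sigma_i(X,Y)\,\overline{\sigma_i(Z)}$, which is $\big(\sum_i\epsilon_i\sigma_i(X,Y)\overline{\sigma_i}\big)(\overline Z)$. This yields \eqref{Rconstr} on sections of $\mathcal L$; since we already know $R(\overline X,\overline Y)V_j = 0$ and the right-hand side also annihilates $V_j$ (both $\overline{R_0(X,Y)}$ and $\overline{\sigma_i}$ kill the $V_j$ by our extension convention), the identity holds on all of $TM$.

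The only mild subtlety — the step I expect to need the most care — is keeping the book-keeping of the sign and of the $\epsilon_i$ factors consistent between \eqref{commutatorEquality}, the action $\nabla_{V_i}\overline Z = \overline{\sigma_i(Z)}$, and the definition $\overline X = X - \sum_i\epsilon_i\kappa_i(X)V_i$; a convenient way to sidestep ambiguity is to verify the formula by pairing with $\overline W$ and with $V_j$ separately using the metric $g$, rather than tracking the endomorphism identities abstractly. Everything else is a routine application of the explicit connection formulas, and no new ideas beyond $\nabla^0\sigma_i = 0$ and the commutativity of the $\sigma_i$ are required.
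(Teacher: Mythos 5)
Your computation is correct and is exactly the direct verification the paper has in mind: its proof of this lemma consists of the single sentence that the formulas follow from \eqref{commutatorEquality} and Lemma \ref{lemmaConnectionFormulas}, which is precisely the calculation you carry out (including the use of $\nabla^0\sigma_i=0$ for $R(V_i,\overline X)=0$ and the commutativity of $\mathfrak n$ for $R(V_i,V_j)=0$). No gaps; the $\epsilon_i$ bookkeeping in your expansion of $\nabla_{[\overline X,\overline Y]}\overline Z$ matches the stated formula.
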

\begin{proof}
	The formulas above follow directly from \eqref{commutatorEquality} and  Lemma \ref{lemmaConnectionFormulas}.
\end{proof}

Now we find the holonomy algebra of the connection $\nabla$.

\begin{theorem} Let $\fb_0\subset\so(r,s)$ be the holonomy algebra of a metric connection $\nabla^0$ with a parallel skew-symmetric torsion $T_0$ on a simply connected manifold $M_0$. Let $\n\subset\so(r,s)$ be a commutative subalgebra commuting with $\fb_0$ and such that $\fb_0\cap \n=0$.  Suppose that the parallel 2-forms on $M_0$ defined by elements of $\n$ are exact. Let $\Real^{r_0,s_0}$ be a pseudo-Euclidean space of dimension $k$. Then the tangent space to $M$ at the point $x\in M$ may be identified with the pseudo-Euclidean space $$\Real^{r,s}\oplus \Real^{r_0,s_0};$$ the holonomy algebra of the connection $\nabla$ constructed just above annihilates the space $\Real^{r_0,s_0}$ and  coincides with
	$$\fb_0\oplus\n\subset\so(r,s)\subset\so(r+r_0,s+s_0).$$ 
\end{theorem}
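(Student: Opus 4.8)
The plan is to compute the holonomy algebra of $\nabla$ directly from the Ambrose--Singer theorem, using the explicit description of the curvature tensor and the connection that we have already established in Lemmas \ref{lemmaConnectionFormulas} and \ref{curvatureTensor}. First I would record the two basic structural facts from the construction: the fields $V_i$ are $\nabla$-parallel (Lemma \ref{lemmaConnectionFormulas}), hence $\nabla$ preserves the orthogonal splitting $TM=\mathcal L\oplus\mathcal L_0$ and the holonomy algebra of $\nabla$ at $x$ is contained in $\so(\mathcal L_x)\oplus 0=\so(r,s)\oplus 0\subset\so(r+r_0,s+s_0)$; this already gives the claim that the holonomy annihilates $\Real^{r_0,s_0}$. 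So the problem reduces to identifying the holonomy inside $\so(r,s)$, where we use the identification $\overline X\leftrightarrow X$ of $\mathcal L_x$ with $\Real^{r,s}$.

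Next I would show the holonomy is contained in $\fb_0\oplus\n$. By \eqref{Rconstr} the curvature operators at $x$ are $R(\overline X,\overline Y)=\overline{R_0(X,Y)}+\sum_i\epsilon_i\sigma_i(X,Y)\overline{\sigma_i}$, which under the identification above lie in $\fb_0+\n$ since $R_0(X,Y)\in\fb_0$ and each $\sigma_i\in\n$. For the Ambrose--Singer span one also needs the parallel transports of these operators; here the key point is that $\fb_0\oplus\n$ is itself a Lie subalgebra of $\so(r,s)$ (because $\n$ is commutative and commutes with $\fb_0$, with $\fb_0\cap\n=0$ giving the direct sum as vector spaces), and it is preserved by the adjoint action of the holonomy, so the $\nabla$-parallel transport of any operator valued in $\fb_0\oplus\n$ stays in $\fb_0\oplus\n$. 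Concretely, since $\fb_0$ is the holonomy algebra of $\nabla^0$ and the $\sigma_i$ are $\nabla^0$-parallel, parallel transport along a loop acts on $\fb_0$ by an inner automorphism of the group generated by $\fb_0$ and fixes each $\sigma_i$, so conjugating a curvature operator keeps it inside $\fb_0\oplus\n$. Hence $\hol_x(\nabla)\subseteq\fb_0\oplus\n$.

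For the reverse inclusion I would argue that $\hol_x(\nabla)$ contains both summands. That $\fb_0$ is contained: restricting to curves and vectors inside a leaf $M_0\times\{pt\}$, and using $\nabla_{\overline X}\overline Y=\overline{\nabla^0_XY}$, the $\nabla$-holonomy computed along such curves reproduces exactly the $\nabla^0$-holonomy, i.e.\ $\fb_0$, acting on $\mathcal L_x\cong\Real^{r,s}$. To get $\n$ as well, I would look at the curvature formula \eqref{Rconstr} again: taking suitable $X,Y$ we extract, modulo $\fb_0$, the operators $\sum_i\epsilon_i\sigma_i(X,Y)\overline{\sigma_i}$, and since $M_0$ is simply connected and $\fb_0$ is already in the holonomy, the span of all such operators together with $\fb_0$ must equal $\fb_0$ plus the span of the $\overline{\sigma_i}$ that actually appear; here one uses that $\fb_0$ is a Berger algebra with torsion $T_0$ (the Proposition) so $\fb_0=\mathcal L(\R^{T_0}(\fb_0))$ is spanned by images of $R_0$, hence the $\overline{R_0(X,Y)}$ terms do not "leak" outside $\fb_0$ and the $\overline{\sigma_i}$-components are genuinely independent contributions. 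A cleaner route: since $\nabla V_j=0$ and $\nabla T=0$, the operators $\overline{\sigma_i}$ are $\nabla$-parallel fields of endomorphisms, and one shows the bundle $\mathcal A$ of endomorphisms commuting with all $\overline{\sigma_i}$ (analogous to Proposition \ref{Proph}) is $\nabla$-preserved, so the de Rham-type argument isolates the $\n$-part; combined with the explicit curvature one concludes $\n\subseteq\hol_x(\nabla)$.

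The main obstacle I anticipate is the reverse inclusion, specifically verifying that the $\n$-component of the curvature is not absorbed into $\fb_0$ and that the $\overline{\sigma_i}$ really do lie in the span of the curvature operators' images rather than merely in the span of the operators themselves. This is where the hypothesis $\fb_0\cap\n=0$ and the Berger-algebra-with-torsion property of $\fb_0$ are essential: they guarantee that when we write a curvature operator $\overline{R_0(X,Y)}+\sum_i\epsilon_i\sigma_i(X,Y)\overline{\sigma_i}$, the first summand stays in $\fb_0$ and the second is a genuinely new direction, so that $\mathcal L(\R^T(\fb_0\oplus\n))$, which contains $\fb_0$ (from the leafwise holonomy) and some nonzero combination of the $\overline{\sigma_i}$ for each $i$ (since each $\sigma_i$ is exact and nonzero, $\sigma_i=d\kappa_i$ is not identically zero as a curvature contribution), spans exactly $\fb_0\oplus\n$. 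Closing this gap carefully — rather than the routine Koszul/curvature computations — is the heart of the proof.
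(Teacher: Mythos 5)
Your overall strategy coincides with the paper's (Ambrose--Singer applied to the explicit curvature of Lemma \ref{curvatureTensor}, together with the parallel bundle $\mathcal{A}$ of skew endomorphisms of $\mathcal{L}$ commuting with all $\sigma_i$, along which $\nabla$-transport reduces to $\nabla^0$-transport), and your containment $\hol_x(\nabla)\subseteq\fb_0\oplus\n$ is essentially the paper's argument. The gap is in the reverse inclusion, exactly where you locate the difficulty. Your first step there --- that transporting along curves inside a leaf $M_0\times\{\pt\}$ ``reproduces exactly the $\nabla^0$-holonomy $\fb_0$'' --- is not correct as stated: such curves are tangent to $TM_0$, not to $\mathcal{L}$ (which is in general non-integrable, its obstruction being precisely the $\sigma_i$), and for $\dot\gamma=X=\overline X+\sum_i\epsilon_i\kappa_i(X)V_i$ Lemma \ref{lemmaConnectionFormulas} gives $\nabla_{\dot\gamma}\overline Y=\overline{\nabla^0_XY}+\sum_i\epsilon_i\kappa_i(X)\overline{\sigma_i(Y)}$. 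So the induced transport on the vector bundle $\mathcal{L}$ is the $\nabla^0$-transport twisted by an $\n$-valued term, and the leafwise holonomy only lands in $\fb_0\oplus\n$; it does not isolate $\fb_0$. (The cancellation $[\sigma_\alpha,A]=0$ that makes the transport reduce to the $M_0$ one holds for sections of $\mathcal{A}$, not for sections of $\mathcal{L}$.) Since your second step extracts the $\sigma_i$ only ``modulo $\fb_0$,'' it presupposes $\fb_0\subseteq\hol_x(\nabla)$, and the two steps become circular. Moreover ``the $\overline{\sigma_i}$ that actually appear'' is left unjustified: you need that the map $\xi\mapsto(\sigma_1(\xi),\dots,\sigma_k(\xi))$ on $\wedge^2\Real^{r,s}$ is onto $\Real^k$, which is true but should be said; and the Berger-algebra property of $\fb_0$ plays no role here (that $\tau^{-1}_{\pi\circ\gamma}R_0(X,Y)\in\fb_0$ is just Ambrose--Singer for $\nabla^0$).

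The paper breaks the circle by proving $\n\subseteq\hol_x(\nabla)$ \emph{first}, using only the curvature at the single point $x$ and the symmetry \eqref{sympropR}: regard $R_x$ as a symmetric endomorphism of $\wedge^2\mathcal{L}_x\cong\so(r,s)$; since $\sigma_i\notin\fb_0\oplus\n_i$ (here $\n_i=\langle\sigma_j:j\neq i\rangle$, and this is where $\fb_0\cap\n=0$ and linear independence enter), there is $\xi_i\in(\fb_0\oplus\n_i)^\bot$ with $g_x(\xi_i,\sigma_i)\neq0$; symmetry forces $R_0(\xi_i)=0$ and kills the $j\neq i$ terms, so $R_x(\xi_i)=\epsilon_i g_x(\xi_i,\sigma_i)\sigma_i\neq0$ and $\sigma_i\in\hol_x(\nabla)$. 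Once $\n\subseteq\hol_x(\nabla)$ is known, subtracting the $\n$-components from the Ambrose--Singer generators \eqref{holonomyGenerator} yields all $\overline{\tau^{-1}_{\pi\circ\gamma}(R_0)(X,Y)}$, which span $\fb_0$. You would need to supply this (or an equivalent) argument to close your proof; as written, the heart of the reverse inclusion is described but not proved.
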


\begin{proof}
	
	Let $\mathcal{A}$ be the following subbundle of $ \operatorname{End}(TM)$:  
	$$ \Gamma(\mathcal{A}) = \{ A \, | \, A \in \Gamma(\so(\mathcal{L})),  [A, \sigma_i] = 0,\,\, i=1,\dots,k \}. $$ We claim that the subbundle 
	$ \mathcal{A} $ is  parallel.
	Indeed, if $A$ is a section of $ \mathcal{A} $, then
	$$[\nabla_X A, \sigma_i] = \nabla_X [A, \sigma_i] - [A,\nabla_X \sigma_i] = 0. $$

	Let $x^1, \dots, x^n$ be local coordinates on $M_0$ and $y^1, \dots, y^k$ be coordinates on $L_0$ corresponding to the vector fields $V_1, \dots, V_k. $ Let $\partial_i = \frac{\partial }{\partial x^i}. $ Then $\overline{\partial_1}, \dots, \overline{\partial_n}, V_1, \dots, V_k$ is a local frame on $M$ such that $\mathcal{L}$ is locally linearly generated  by $ \overline{\partial_1}, \dots, \overline{\partial_n} $.
	The dual coframe of that frame is $ dx^1, \dots, dx^n, \omega^1, \dots, \omega^k$, where $\omega^j = dy^j + \kappa_j $.
	With respect to the chosen frame, the Christoffel symbols of $\nabla$ are the following:
	$$ \Gamma_{i j}^{l} = {\Gamma_0}_{ij}^{l}, \quad \Gamma_{n + \alpha\, j}^{l} = \sigma_{\alpha j}^l, $$ 
	and all other Christoffel symbols are zero.

	Let $\gamma(t)$, $0\leq t\leq 1$, be a smooth curve in $M,$ $x = \gamma(0), y = \gamma(1).$
	The tangent vector field to $\gamma(t)$ may be decomposed as
	$$ \dot{\gamma}(t) = f^j(t)\overline{\partial_j} + k^\alpha(t)V_\alpha $$ for some functions
	$f^j(t)$, $k^\alpha(t)$.
	Let $A(t)=A_{i}^j(t) dx^i \otimes \overline{\partial_j}$ be a section of $\mathcal{A}$ along the curve $\gamma(t).$ We will denote by $\dot{A}(t)$ the field $ \dot A_{i}^j(t) dx^i \otimes \overline{\partial_j}.$ Denote by ${\Gamma_0}_j$ the matrix with elements $ ( {\Gamma_0}_{ji}^{l} ) .$
	The parallel transport equation in the bundle  $\mathcal{A}$ takes the form
	$$
	\dot{A}(t) + f^j(t) [{\Gamma_0}_{j}, A(t)] + k^\alpha(t) [{\sigma}_{\alpha}, A(t)]=0,
	$$
	which is just
	\begin{equation}
	\dot{A}(t) + f^j(t) [{\Gamma_0}_{j}, A(t)] = 0. 
	\end{equation}
	This equation coincides with the parallel transport equation for the field of endomorphisms $B(t)=A_{i}^j(t) dx^i \otimes \partial_j$ on $M_0$ along the curve
	$\pi\circ\gamma$, where $\pi:M\to M_0$ is the projection.
	Denote by $ \tau_{\gamma}$ and $ \tau_{\pi\circ \gamma} $ the parallel transports of the connections in the bundles of endomorphisms under the consideration.
	We conclude that $$\tau_\gamma A_x = \overline{\tau_{\pi \circ\gamma} B_{\pi(x)}}, $$ 
	where $A_x\in\mathcal{A}_x$, $B_{\pi(x)}$ is the corresponding endomorphism  of $T_{\pi(x)}M_0$,  $\overline{\tau_{\pi \circ\gamma} B_{\pi(x)}}$ is the element of $\mathcal{A}_{y}$ corresponding to the endomorphism  $\tau_{\pi \circ\gamma} B_{\pi(x)}$ of $T_{\pi(y)}M_0$.

	By the Ambrose-Singer theorem on holonomy, the holonomy algebra $\mathfrak{g}$ of $M$ at $x$ is spanned by the operators of the form:
	$$ \tau_\gamma ^{-1} (R_y(U,V )), $$
	for all  possible piecewise smooth curves $\gamma$ starting at the point $x$ with an end-point $y$ and all $U,V \in T_yM.$ Note that here $\tau_\gamma$ is the parallel transport in the bundle $\mathcal{A}$. By the above considerations we get
	\begin{equation}
	\label{holonomyGenerator}
	\tau_\gamma ^{-1} (R_y(\overline X, \overline Y)) =  
	\quad \overline{(\tau_{\pi \circ\gamma} )^{-1} (R_0)_{\pi (y)}(X, Y)} + \sum_{j}\epsilon_j \sigma_j(X,Y)\overline{\sigma_j}
	\end{equation} for all $ X,  Y\in T_{\pi(y)}M_0$.
	
	Let us prove that $\g$ contains $\n$. Fix an endomorphism $\sigma_i\in\n$. Denote by $\n_i\subset\n$ the vector subspace spanned by the 
	endomorphisms $\sigma_1,\dots,\sigma_{i-1},\sigma_{i+1},\dots,\sigma_k$.
	We claim that there exists $\xi_i\in\so(r,s)=\so(\mathcal{L}_x)$ such that
	$\xi\in(\fb_0\oplus\n_i)^\bot$ and $g_x(\xi_i,\sigma_i)\neq 0$. Indeed, if this is not a case, it holds $(\fb_0\oplus\n_i)^\bot\subset \sigma_i^\bot$, which would imply $\sigma_i\in   \fb_0\oplus\n_i$ and give a contradiction.
	Consider such a vector $\xi_i$. Using Lemma~\ref{curvatureTensor}, we get
	$$R_x(\xi_i)=\epsilon_i g_x(\xi_i,\sigma_i)\sigma_i.$$ This implies that $\sigma_i\in\g$, and $\n\subset\g$. 
	Now it is obvious that \eqref{holonomyGenerator} implies  $\g=\fb_0\oplus\n$. This proves the theorem. 
	
\end{proof}

As we explained in Section \ref{SecRed}, \cite[Theorem 5.1]{CMS21} gives a construction of geometry with parallel skew torsion from a geometry with parallel curvature in the sense of \cite[Definition 4.7]{CMS21}. Let us consider the just constructed structure $(M,g,\nabla)$. Let $x\in M$.
We have the holonomy-invariant decomposition 
$$T_xM=\mathcal{L}_x\oplus\mathcal{L}_{0x},$$
and the torsion satisfies the condition
$$T_x\in\wedge^3\mathcal{L}_x\oplus (\wedge^2\mathcal{L}_x\otimes \mathcal{L}_{0x}).$$
The distribution $\mathcal{L}_0$ is involutive, and the corresponding foliation $\mathfrak{F}$ consists of the fibers of the projection $M_0\times N_0\to M_0$, i.e., $M/\mathfrak{F}=M_0$. The induced geometry with parallel skew torsion coincides with the initial $(M_0,g_0,\nabla^0)$. Note that $L_0$ may be consider as an Abelian Lie group with the Lie algebra $\n$. 
Now, the geometry with parallel curvature on $M_0$ is defined by the trivial $L_0$-principal bundle $$M_0\times L_0\to M_0$$ and the connection form $\gamma$, where
$$\gamma(V_i)=\sigma_i\in \n \quad \text{and}\quad \gamma|_\mathcal{L}=0.$$  Applying   
\cite[Theorem 5.1]{CMS21} to that structure on $M_0$, one gets exactly the structure $(M,g,\nabla)$. Thus our construction is a special case of   
\cite[Theorem 5.1]{CMS21}. At the same time we avoid the technical conditions on the principle bundle, and the explicit expression of the connection allows us to compute torsion, curvature and holonomy.  

Let us recall a construction from~\cite{Storm3} (with a slight modification for the pseudo-Riemannian case). Let $(\m,R_0,T_0)$ be an infinitesimal model of a naturally reductive homogeneous space.
Let
 $\n\subset\so(\m)$ be a subalgebra commuting with $\fb_0=\im R_0\subset\so(\m)$ and 
 annihilating $R_0$ and $T_0$. Denote by $L_0$ a copy of the vector space $\n$. Suppose that there exists an  $\n$-invariant pseudo-Euclidean metric $h$ on $L_0$. Let $V_1,\dots,V_k$ be an orthonormal basis of $L_0$, and let $\sigma_1,\dots,\sigma_k$ be the corresponding basis of $\n$. Denote by $\bar\sigma_1,\dots,\bar\sigma_k$ the corresponding elements for the representation of $\n$ in $\m\oplus L_0$.
Define the tensors 
$$T=T_0+\sum_{i=1}^k \sigma_i\wedge V_i+2T_\n,$$
$$R=R_0+\sum_{i=1}^k \epsilon_i\bar\sigma_i\circ\bar\sigma_i,$$
where the 3-form $T_\n$ corresponds to the Lie bracket on $\n$, and $\epsilon_i=h(V_i,V_i)$. Then the triple 
$(\m\oplus L_0,R,T)$ is an infinitesimal model. As in the proof of the theorem above, if $\fb_0\cap\n=0$, then $\im R=\fb_0\oplus\n$. 
If the Lie algebra $\n$ is commutative, then $T_\n=0$, and the expressions for the just given $T$ and $R$ coincide with the expressions for the torsion and the curvature from the construction of this section.

\section{Reducible case: $L$ is vertical and $\dim L\geq 4$} \label{Secvert}

Before we start to study the general reducible structures, we consider the case when the Lorentzian part is vertical in the sense of \cite{CMS21}, this will allow to better understand Theorem \ref{ThdimL>3} from the next section and also simplify its proof.

\begin{theorem}\label{Thvert}
Let $(M, g, \nabla)$ be a Lorentzian geometry with a parallel skew-symmetric torsion $T$,  $x \in M,$ and let $\frakg\subset\so(1,n+1)$ be  the holonomy algebra of the connection $\nabla$ at the point $x$. Suppose that the holonomy representation of $\frakg$ in $T_x M$ decomposes into a non-trivial orthogonal direct sum 
\begin{equation}
\label{tangentSpaceDecomposition}
T_x M = L \oplus E
\end{equation}
of $\frakg$-modules $L = \mathR^{1, k + 1}$, $k\geq 2$, and $E = \mathR^{n - k}$ such that the representation of $\g$ in the Lorentzian part $L$ is weakly irreducible and $\frakg \cap \so(L) = 0,$ i.e. the Lorentzian part is vertical in the sense of~\cite{CMS21}. Choose a Witt basis $p, e_1, \dots, e_k, q$ in $L.$ Then the following holds:
\begin{itemize}
	\item 
There exists an orthogonal decomposition
$$E = E_1 \oplus E_0,$$
 $\dim E_0 = k,$ an orthonormal basis $V_1,\dots, V_k$ of $E_0$,
 a subalgebra $\fb_0 \subset \so(E_1)$, which annihilates some multivectors
  $$\omega_{E_1}\in\wedge^3 E_1,\quad \theta_1,\dots,\theta_k\in\so(E_1),\quad\lambda\in\wedge^2E.$$
  Moreover, the endomorphisms $\theta_1,\dots,\theta_k$ are mutually commuting; the Lie algebra $\fb_0$ is Berger subalgebra of $\so(E)$   with the torsion $\omega_{E_1}$.
    It holds that $\theta_i\cdot \lambda=0$, $\theta_i\cdot\omega_{E_1}=0$ and  $\lambda\cdot\omega_E=0$, where
     $$\omega_E=\omega_{E_1} + \sum_{i = 1}^k \theta_i \wedge V_i \in \wedge^3E.$$
  
  \item
  
  The holonomy algebra  $\g$ is of the form
$$ 
\frakg = \{ p \wedge \psi(A) + A \, | \, A \in \fb \},
$$
where $$\fb=\fb_0+\left<\theta_1,\dots,\theta_k\right>\subset\so(E_1)$$ and   $$\psi : \fb \to \mathR^k=\left<e_1, \dots, e_k \right>$$ is a surjective linear map such that $ \psi|_{[\fb, \fb]} = 0.$ 
\item
The torsion of $\nabla$ at the point $x$ has the form
$$
T = p \wedge \omega_{\Real^k} + p \wedge \sum_{i = 1}^k X_i \wedge V_i + p \wedge \lambda +\omega_{E_1}+ \sum_{i = 1}^k \theta_i \wedge V_i, 
$$ where $X_1,\dots, X_k$ is a basis of $\Real^k$, and  $\omega_{\Real^k}\in\wedge^2\Real^k$.
\item
Each algebraic curvature tensor $R\in\R^T(\g)$ is defined by the equalities
\begin{align*} R(q,X)&=\sum_{i = 1}^kg(X,X_i)\big( p\wedge\psi(\theta_i)+\theta_i\big),\quad X\in\Real^k,\\
R(Y,Z)&= p\wedge \sum_{i = 1}^k\theta_i(Y,Z)X_i+C(Y,Z),\quad Y,Z\in E_1, 
\end{align*} where $$C(Y,Z)=C_0(Y,Z)+\sum_{i = 1}^k\theta_i(Y,Z)\theta_i,$$
$C_0\in\R^{\omega_{E_1}}(\fb_0),$ and it holds $$\psi(C(Y,Z))=\sum_{i = 1}^k \theta_i(Y,Z)X_i.$$
\end{itemize}

\end{theorem}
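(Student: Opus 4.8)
The strategy is to exploit the hypothesis $\g\cap\so(L)=0$ together with the classification of weakly irreducible subalgebras of $\so(1,k+1)_{\mathbb{R}p}$ recalled in Section~\ref{Secpomega}, and to bootstrap from there using the Bianchi identity and the algebraic description of the curvature tensors $\R^T(\g)$. First I would observe that since $\g$ acts weakly irreducibly on $L=\mathbb{R}^{1,k+1}$ but does not meet $\so(L)$, it preserves an isotropic line in $L$; fixing a Witt basis $p,e_1,\dots,e_k,q$, the projection of $\g$ to $\so(L)$ lands in $\so(1,k+1)_{\mathbb{R}p}$, and the condition $\g\cap\so(L)=0$ forces this projection to be of type~4 — i.e.\ $\g$ fixes $p$, and the $\mathbb{R}p\wedge q$-component vanishes. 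Thus the projection of $\g$ to $\so(L)\oplus\so(E)$ has the form $\{A+p\wedge\psi(A)\mid A\in\fb\}$ for a subalgebra $\fb\subset\so(E)$ (equal to the $\so(E)$-projection $\fb=\pr_{\so(E)}\g$) and a surjective linear map $\psi\colon\fb\to\mathbb{R}^k$ with $\psi|_{[\fb,\fb]}=0$; here I use that $p$ is $\g$-invariant to conclude $p$ (viewed in $L\subset T_xM$) generates a $\nabla$-parallel line, and that $p\wedge\mathbb{R}^k$ is the full isotropic part.

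Next I would analyze the torsion. Since $\g$ fixes $p$ and preserves the decomposition $T_xM=L\oplus E$, and since $\g$ annihilates $T_x$ (because $\nabla T=0$ and $\g=\hol_x$, so $\g\cdot T_x=0$), decompose $T_x$ into its $L$- and $E$-components and apply Lemma~\ref{lemwirr} (or rather its proof) to the $L$-direction: the $\g$-invariant piece of $T_x$ lying in $\wedge^2 L\otimes\cdots$ must be of the form $p\wedge(\text{something})$, while no $p\wedge q$-term survives. Writing $E_0$ for the image in $E$ of the "$\psi$-part" and $E_1=E_0^\perp$ — concretely $E_0$ is spanned by an orthonormal basis $V_1,\dots,V_k$ dual to the $X_i$ appearing below, and the $\theta_i\in\so(E_1)$ are defined so that $\fb=\fb_0+\langle\theta_1,\dots,\theta_k\rangle$ with $\fb_0=\fb\cap\so(E_1)$ — one reads off that $T_x=p\wedge\omega_{\mathbb{R}^k}+p\wedge\sum X_i\wedge V_i+p\wedge\lambda+\omega_{E_1}+\sum\theta_i\wedge V_i$. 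The commutativity of the $\theta_i$ and the relations $\theta_i\cdot\lambda=0$, $\theta_i\cdot\omega_{E_1}=0$, $\lambda\cdot\omega_E=0$ come from $\g\cdot T_x=0$ combined with $\sigma_{T_x}=0$-type constraints: since the $L$-part of $T_x$ is $p\wedge(\cdots)$ and $p$ is isotropic, $\sigma_{T_x}$ restricted to mixed directions vanishes, which after expanding $\Su T(T(\cdot,\cdot),\cdot)$ yields exactly these bracket conditions; that $\fb_0$ is a Berger subalgebra of $\so(E)$ with torsion $\omega_{E_1}$ follows from the Proposition of Section~\ref{SecPrel} applied to the induced geometry, or directly from $\mathcal{L}(\R^{\omega_{E_1}}(\fb_0))=\fb_0$ which I extract from the curvature formula below.

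The curvature description is then obtained by imposing the first Bianchi identity~\eqref{Bident} on a general $R\in\wedge^2 T_xM\otimes\g$. Using that $\g$ fixes $p$ and kills $q$ only up to the $p\wedge\mathbb{R}^k$ and $\so(E)$ pieces, the components $R(q,\cdot)$, $R(p,\cdot)$, $R(\cdot,\cdot)|_{E_1\times E_1}$, etc., decouple; the symmetry~\eqref{sympropR} forces $R(q,X)$ (for $X\in\mathbb{R}^k$) to be governed by a single symmetric object, which the Bianchi identity $\Su R(X,Y)Z=\sigma_T(X,Y,Z)$ pins down against the explicit $T_x$ computed above, giving $R(q,X)=\sum g(X,X_i)(p\wedge\psi(\theta_i)+\theta_i)$. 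Similarly $R(Y,Z)$ for $Y,Z\in E_1$ splits as $p\wedge\sum\theta_i(Y,Z)X_i+C(Y,Z)$ with $C(Y,Z)=C_0(Y,Z)+\sum\theta_i(Y,Z)\theta_i$, $C_0\in\R^{\omega_{E_1}}(\fb_0)$, and the constraint $\psi(C(Y,Z))=\sum\theta_i(Y,Z)X_i$ is precisely the condition that $R(Y,Z)$ land in $\g$ (i.e.\ is compatible with $\psi$); the Bianchi identity on the $E_1$-directions reduces to the Bianchi identity for $C_0$ with torsion $\omega_{E_1}$. I would then invoke that $\g$ is a Berger algebra with torsion $T_x$ (the Proposition of Section~\ref{SecPrel}) to conclude that the $\theta_i$ and $\fb_0$ actually fill out all of $\fb$, which is what forces $\fb=\fb_0+\langle\theta_1,\dots,\theta_k\rangle$ rather than merely $\subseteq$.

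\textbf{Main obstacle.} The bookkeeping in the curvature step is where the real work lies: one must carefully track how the Bianchi identity couples the "$p$-direction" data ($\psi$, the $X_i$) to the "$E$-direction" data ($\theta_i$, $\fb_0$, $\omega_{E_1}$) across the several bilinear slots of $R$, and in particular show that the only freedom left is $C_0\in\R^{\omega_{E_1}}(\fb_0)$ and the choice of $X_i$ — everything else is rigidly determined by $\psi$ and the $\theta_i$. A secondary subtlety is establishing that $\dim E_0=k$ exactly (not less): this requires that $\psi$ be surjective onto $\mathbb{R}^k$ \emph{and} that the resulting vectors $X_i$ witnessing the curvature be linearly independent, which one gets from weak irreducibility of $\g$ on $L$ — if some combination of the $X_i$ vanished, $\g$ would preserve a proper non-degenerate subspace of $L$, contradicting the hypothesis.
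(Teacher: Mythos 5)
Your overall route is the paper's: identify the $\so(L)$-projection $\g_L$ of $\g$, decompose $T_x$ into its $(L,E)$-graded pieces and kill most of them using $\g\cdot T_x=0$, then run the first Bianchi identity together with the symmetry \eqref{sympropR} to pin down $\R^T(\g)$, and finally use Ambrose--Singer/weak irreducibility to force $\dim E_0=k$. However, there is a genuine gap at the very first, and most important, step. You assert that ``the condition $\g\cap\so(L)=0$ forces this projection to be of type~4,'' and from this you write $\g=\{A+p\wedge\psi(A)\mid A\in\fb\}$ with $\fb\subset\so(E)$. Neither half of this is an implication of $\g\cap\so(L)=0$ by itself: that condition only says the projection $\g\to\so(E)$ is injective, so that $\g$ is the graph of a surjective Lie algebra homomorphism $\fb\to\g_L$. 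A priori $\g_L$ could still be any of the four weakly irreducible types, and in particular could contain a genuine $\so(k)$-part (types 2 and 4 with $\h\neq0$) or a $p\wedge q$-part (types 1 and 3) --- in which case the displayed formula for $\g$ would be false. The missing argument, which is exactly what makes the ``vertical'' hypothesis bite, is that $\g\cong\fb\subset\so(n-k)$ is of compact type, hence so is its quotient $\g_L$; and among the weakly irreducible subalgebras of $\so(1,k+1)$ the only one of compact type is the abelian algebra $p\wedge\Real^k$ (all the others contain $p\wedge\Real^m$ as a non-central abelian ideal, or a non-abelian solvable ideal). Without this step the entire structure of the theorem (the map $\psi$, the absence of an $\so(k)$- and $p\wedge q$-component) is unjustified.

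Two smaller points. First, your derivation of the commutation relations ($[\theta_i,\theta_j]=0$, $\theta_i\cdot\lambda=\theta_i\cdot\omega_{E_1}=0$) from ``$\sigma_T$-type constraints'' has the logic slightly reversed: $\g\cdot T=0$ only gives $[\fb,\theta_i]=0$, and the crucial input $\theta_i\in\fb$ comes out of the curvature computation ($R(q,X)|_E=\sum_i(X,X_i)\theta_i$ together with linear independence of the $X_i$), not from vanishing of $\sigma_T$ on mixed slots --- indeed $\sigma_T$ does not vanish there a priori; rather, the Bianchi identity forces certain mixed components of $\sigma_T$ (such as the one producing $\lambda\cdot\omega_E$) to vanish because the corresponding components of $\underset{XYZ}{\Su}R$ do. Second, the computation behind ``the mixed pieces of $T_x$ must be of the form $p\wedge(\cdots)$'' (the analogue of the paper's Lemma~\ref{Lemmunu}) is a nontrivial case analysis of the equation $(p\wedge\psi(A)+A)\cdot\mu=0$ and should not be waved through, though this is a matter of completeness rather than of a wrong idea.
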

\begin{proof}
Let us denote by $\g_L$ and $\fb $ the projections of the holonomy algebra $ \g $ to $\so(L)$ and $\so(E)$, respectively. 
The assumption $\frakg \cap \so(L) = 0$ allows us to regard $\g \subset \so(L) \oplus \so(E)$ as a graph of a surjective Lie algebra homomorphism $ \bar \psi : \fb \to \g_L.$
Therefore, $\g_L$ is a compact Lie algebra. According to the classification of weakly irreducible  subalgebras of $ \so(1, n + 1)$, $\g$ has to be conjugate to $ p \wedge \mathR^k.$
  Therefore we have
$$ \g = \{ p \wedge \psi(A) + A \, | \, A \in \fb \}, $$
where $\psi : \fb \to \mathR^k$ is a linear map such that $ p \wedge \psi(A) = \bar \psi(A).$ It is clear that $\psi$ is surjective and $\psi|_{[\fb, \fb]} = 0$.

Decomposition (\ref{tangentSpaceDecomposition}) defines the following $\g$-invariant decomposition of space of 3-vectors on~$T_xM$:
$$ {\wedge}^3 T_x M = {\wedge}^3 L \oplus ({\wedge}^2 L \otimes E) \oplus ( L \otimes {\wedge}^2 E) \oplus {\wedge}^3 E. $$
We get the corresponding decomposition for $T$:
\begin{equation}
\label{torsionDecomposition}
T = \omega_L + \mu + \nu + \omega_E. 
\end{equation}
By assumption, $\g\cdot T=0$. Therefore, $\g$ annihilates each of the four 3-vectors from (\ref{torsionDecomposition}). Results of Section 3 show that $ \omega_L = p \wedge \omega_{\Real^k},$ where $\omega_{\Real^k} \in \wedge^2 \mathR^k.$

\begin{lem}\label{Lemmunu}
	It holds
	 $$ \nu = p \wedge \lambda,\quad  \mu = p \wedge \sum_{j = 1}^k e_j \wedge \mu^{j}, $$
	 where $\lambda \in \wedge^2 E,$  $\fb \cdot \lambda = 0$;
	 	$ \mu^{j} \in E$, $\fb\cdot \mu^{j} = 0, \, j = 1, \dots, k.$
\end{lem}

\begin{proof}
Let us consider the tensor $\mu$. It can be uniquely written in the following form:
\begin{equation} \label{torsionComponentDecomposition}
\mu = p \wedge \sum_{j = 1}^{k} e_j \wedge \mu^{pj} + p \wedge q \wedge \mu^{pq} + \sum_{j = 1}^{k} e_j \wedge q \wedge \mu^{jq} + \sum_{i, j = 1}^k e_i \wedge e_j \wedge \mu^{ij},
\end{equation}
where $\mu^{pj}, \mu^{jq}, \mu^{pq}, \mu^{ij} \in E$, $\mu^{ij} = - \mu^{ji}.$ Pick an element $ \xi = p \wedge \psi(A) + A $ of $\g .$ Consider the equation $$ \xi \cdot \mu = 0. $$ Using (\ref{torsionComponentDecomposition}) it can be shown that the equation above comes down to the following system:
\begin{align*}
 \sum_{j=1}^k e_j \wedge A\mu^{pj} + \psi(A) \wedge \mu^{pq} - 2\sum_{i,j = 1}^k (e_i, \psi(A))e_j \wedge \mu^{ij} =& 0, \\
 A \mu^{pq} - \sum_{j=1}^k (e_j, \psi(A)) \mu^{jq} = 0, \quad
  \sum_{j=1}^k e_j \wedge A \mu^{jq} =& 0, \\
 \sum_{i = 1}^k e_i \wedge \psi(A) \wedge \mu^{iq} + \sum_{i, j = 1}^k e_i \wedge e_j \wedge A \mu^{ij} =& 0.
\end{align*}
 From the third equation it follows that $ A \mu^{jq} = 0.$ 
The fourth equation when applied to arbitrary  vectors $e_l, e_m$ gives
$$ 
(\psi(A), e_l) \mu^{m q} - (\psi(A), e_m) \mu^{l q} + A \mu^{l m} = 0.
$$
Applying $A$, we see that $ A(A \mu^{l m}) = 0.$ Consequently, $$0 = (A(A \mu^{l m}), \mu^{l m}) = -(A \mu^{l m}, A \mu^{l m}),$$ which implies $A \mu^{l m} = 0.$
Using the fourth equation again we see that $\mu^{iq} = 0$ for all $i.$
The second equation now is the following: $ A \mu^{pq} = 0.$
Applying $A$ to the first equation we obtain
$$ \sum_{j=1}^k e_j \wedge A(A \mu^{pj}) = 0, $$
from which it follows that $A \mu^{pj} = 0.$ 
The first equation comes down to
$$ \psi(A) \wedge \mu^{pq} - 2\sum_{i,j = 1}^k (e_i, \psi(A))e_j \wedge \mu^{ij} = 0. $$
Applying the equality to $\psi(A)$, we obtain
$$ (\psi(A), \psi(A))\mu^{pq} - 2\sum_{i,j = 1}^k (e_i, \psi(A))(e_j, \psi(A)) \mu^{ij} = 0. $$
The second term is zero because $\mu^{ij} = -\mu^{ji}.$ Finally we have  $\mu^{pq} = 0, $ and $ \mu^{ij} = 0 $ follows easily.
We have shown that
$$ \mu = p \wedge \sum_{j = 1}^k e_j \wedge \mu^{j}, $$
for some $ \mu^{j} \in E$ such that $ \fb\cdot \mu^{j} = 0, \, j = 1, \dots, k.$ The structure of the tensor $\nu$ may be found in a similar way.
\end{proof}

Let us denote by $E_0$ the vector subspace of $E$ spanned by the vectors $\mu^1, \dots, \mu^k.$ The orthogonal complement of $E_0$ in $E$ will be denoted by $E_1.$ We have just seen that $\fb$ annihilates $E_0$, i.e., $\fb\subset\so(E_1)$.

Let us consider an algebraic curvature tensor $R\in\R^T(\g)$. The equality \eqref{sympropR} implies 
$$R(p,\cdot)=R(\cdot|_{E_0},\cdot)=R|_{\wedge^2 \Real^k}=R|_{L\times E}=0.$$
Writing down the Bianchi identity for the vectors $q,e_i,e_j$ we get
\begin{multline*}
R(q, e_i)e_j + R(e_j, q)e_i = \omega_{\Real^k}(\omega_{\Real^k}(e_i), e_j)p - \omega_{\Real^k}(\omega_{\Real^k}(e_j), e_i)p\\ -(\mu^j, \mu^i)p + (\mu^i, \mu^j)p = 0. 
 \end{multline*}
This shows that the endomorphism $K : \mathR^k \to \mathR^k$  defined by the equality
$$ R(q,e_i)|_L = p\wedge K(e_i)$$
is symmetric.
The Bianchi identity written for the vectors $Z \in E,q, e_i $ implies
$$
 R(q, e_i)Z = \omega_E(\mu^i, Z),
$$
i.e.,
$$R(q, e_i)|_E=\omega_E(\mu^i),$$
which implies that $\omega_E(\mu^i)\in\fb\subset\so(E_1)$. Consequently,
$$\omega_E \in \wedge^3 E_1 + \wedge^2 E_1 \otimes E_0.$$

Considering the vectors $Y, Z \in E$ and $q$ we get
$$
R(Y, Z)q = (\lambda \cdot \omega_E)(Y, Z) + \sum_{j=1}^k \omega_E(\mu^j, Y, Z)e_j.
$$
This shows that $\lambda \cdot \omega_E = 0,$ and $$R(Y, Z)|_L = \sum_{j=1}^k (\omega_E(\mu^j) Y, Z)p\wedge e_j.$$
Using \eqref{sympropR}, we get $$R(q,e_i)|_E=\omega_E(\mu^i),$$
i.e., $$K(e_i)=\psi(\omega_E(\mu^j)).$$

From the Ambrose-Singer theorem on holonomy it follows  that $\g_L=p\wedge\Real^k$ is  spanned by the endomorphisms $R(X,Y)|_L $ for all $X, Y \in T_x M,$
i.e., the vectors   $\sum_{j=1}^k(\omega_E(\mu^j) Y, Z)p\wedge e_j$ and $\psi(\omega_E(\mu^i))$ span $\Real^k$. We claim that the vectors $\mu^i, \, i = 1,\dots,k$ are linearly independent. Indeed, suppose that there exist numbers $c_1, \dots, c_k$ such that $\sum_{i=1}^k c_i \mu^i = 0$ and not all $c_i$ are zero. Let $ X=\sum_{i=1}^k c_i e_i.$ Then $X\neq 0$ and it is not hard to see that $X$ is orthogonal to the vectors $\sum_{j=1}^k(\omega_E(\mu^j) Y, Z)p\wedge e_j$. Moreover, $$R(q,X)=p\wedge K(X)+\omega_E\left(\sum_{i=1}^k c_i\mu_i\right)=p\wedge K(X).$$
Consequently $K(X)=0$. Since $K$ is symmetric, its image is orthogonal to $X$. We see that $X$ is orthogonal to the image of the map $\psi$, and we get a contradiction proving the claim. Similar arguments show that
the map $\omega_E|_{E_0}:E_0\to\fb$ is injective. We see that $\dim E_0=k$.   
Let $V_1,\dots V_k$ be an orthonormal basis of $E_0$. Using it, we may rewrite $T$ in the from 
$$
T = p \wedge \omega_{\Real^k} + p \wedge \sum_{i = 1}^k X_i \wedge V_i + p \wedge \lambda +\omega_E,\quad \omega_E=\omega_{E_1}+ \sum_{i = 1}^k \theta_i \wedge V_i, 
$$ where $X_1,\dots,X_k$ is a basis of $\Real^k$, $\theta_1,\dots,\theta_k$ are linearly independent endomorphisms.
The condition $\g\cdot T$ implies $\fb\cdot\theta_i=0$, $\fb\cdot\lambda=0$, $\fb\cdot \omega_{E_1}=0$.
From above we see that $$R(q,X)|_E=\sum_{i=1}^k(X,X_i)\theta_i,\quad X\in\Real^k.$$ This implies that $\theta_i\in\fb$. We conclude that $\theta_i$ are mutually commuting. Let 
 $C=\pr_{\fb}\circ R|_{\wedge^2_E}.$
Writing down the Bianchi identity for 3 vectors from $E_1$, we get 
$$ \underset{X Y Z}{\Su} C(X, Y)Z= \underset{X Y Z}{\Su} \omega_{E_1}(\omega_{E_1}(X, Y), Z) +\underset{X Y Z}{\Su}\sum_{i=1}^k \theta_i(X,Y)\theta_i(Z).$$
We conclude that
$$C(X,Y)=C_0(X,Y)+\sum_{i=1}^k\theta_i(X,Y)\theta_i(Z),$$
where  $C_0\in\R^{\omega_{E_1}}(\fb)$.
This implies that the algebra $\fb_0$ generated by the images of the tensors $C_0$ defined by all $R\in\R^T(\g)$ is a Berger algebra with the torsion $\omega_{E_1}$. It is clear that $\fb_0$ commutes with the endomorphisms $\theta_i$, and $$\fb=\fb_0+\left<\theta_1,\dots,\theta_k\right>.$$
The Bianchi identity written for other triples of vectors does not give any new conditions. The theorem is proved.
\end{proof}

\begin{ex}{\rm
Let us construct some of the spaces  described in the above theorem. Let $(N_0,b_0,T_0)$ be a Riemannian  geometry with a parallel skew torsion $T_0$ and holonomy algebra $\fb_0\subset\so(m)$, $m=\dim N_0$. Let $(M_0,g_0,T_0)$ be the product of $(N_0,b_0,T_0)$ with the flat Minkowski space $\Real^{1,k+1}$. It is clear that the holonomy algebra of $(M_0,g_0,T_0)$ is $\fb_0\subset\so(m)\subset\so(1,k+m+1)$.
Let $\theta_1,\dots,\theta_k\in\so(m)$ be linearly independent mutually commuting endomorphisms commuting with $\fb_0$. Let $\n_0=\left<\theta_1,\dots,\theta_k\right>$. Suppose that $\n_0\cap \fb_0=0$. Consider the Lie algebra $\fb=\fb_0\oplus\n_0$. Let $$\psi:\fb\to\Real^k$$ be a surjective linear map which is zero on $\fb_0$.
Let finally $$\sigma_i=p\wedge\psi(\theta_i)+\theta_i,\quad i=1,\dots,k.$$
The construction of Section \ref{secconstr} gives us a geometry $(M,g,\nabla)$ with the torsion
$$T=p\wedge\sum_{i=1}^k\psi(\theta_i)\wedge V_i+ \sum_{i=1}^k\theta_i\wedge V_i+T_0$$
and the holonomy algebra
$$ 
\frakg = \{ p \wedge \psi(A) + A \, | \, A \in \fb \}\subset\so(1,m+2k+1).
$$
}
\end{ex}

Let us show that \cite[Theorem 4.8]{CMS21} may be applied to connections from this section. We use the notation of Theorem~\ref{Thvert}. Let $V_0\subset E$ be the subspace consisting of vectors annihilated by $\fb$. Let $V_1\subset E$ be the orthogonal complement to $V_0$ in $E$. We obtain the decomposition 
$$E=V_1\oplus V_0.$$ It is clear that
$$\lambda=\lambda_1+\lambda_0,\quad \lambda_1\in\wedge^2 V_1,\quad \lambda_0\in\wedge^2 V_0.$$ It holds that $E_0\subset V_0$.
Consider the holonomy-invariant decomposition
\begin{equation}\label{HVLorv} T_xM=F_1\oplus F_2=(L\oplus V_0)\oplus V_1.\end{equation}
The equality \eqref{TV1V2} holds true. Consequently $F_1$ determines a foliation $\mathfrak{F}$; the metric $g$  and the part of the torsion determined by $T|_{\wedge^3 V_1}$ define a geometry with parallel skew-torsion locally on the leaf space $M/\mathfrak{F}$.

\section{Reducible case: $\dim L\geq 4$} \label{SecdimL>3}

\begin{theorem}\label{ThdimL>3}
	Let $(M, g, \nabla)$ be a Lorentzian geometry with a parallel skew-symmetric torsion $T$,  $x \in M,$ and let $\frakg\subset\so(1,n+1)$ be  the holonomy algebra of the connection $\nabla$ at the point $x$. Suppose that the holonomy representation of $\frakg$ in $T_x M$ decomposes into a non-trivial orthogonal direct sum 
	\begin{equation}
	\label{tangentSpaceDecomposition3}
	T_x M = L \oplus E
	\end{equation}
	of $\frakg$-modules $L = \mathR^{1, k + 1}$, $k\geq 2$, and $E = \mathR^{n - k},$ such that the induced representation of $\g$ in the  Lorentzian part $L$ is weakly irreducible. Suppose that the torsion $T$ is not an element of $\wedge^3L\oplus\wedge^3E$. Denote by $\fb$ the projection of $\g$ to $\so(E)$. Choose a Witt basis $p, e_1, \dots, e_k, q$ in $L$ and set $\Real^k=\left<e_1,\dots,e_k\right>$. Then the following holds:
	\begin{itemize} 
			\item
		The holonomy algebra  $\g$ is of the form
		$$ 
		\frakg = \{ p \wedge \psi(A+B) + A+B \, | A\in\h,\, B \in \fb \}\zr p\wedge \Real^{k_1}$$
		$$=
		\left\{\left.
		\begin{pmatrix}
		0 & -X^\intercal	&	-\psi(A+B)^\intercal	&	0	&	0 \\
		0 & A	& 0 & X & 0 \\
		0 & 0	& 0 & \psi(A + B) & 0 \\
		0 & 0	& 0 & 0 & 0 \\
		0 & 0	& 0 & 0 & B 
		\end{pmatrix}
		\right| \begin{matrix}
		A\in\h,\\ B \in \fb,\\X\in\Real^{k_1}\end{matrix}
		\right\},
		$$
		where an orthogonal decomposition $$\Real^k=\Real^{k_1}\oplus\Real^{k_2}$$
		is fixed, $k=k_1+k_2$, $0\leq k_2\leq k$, 
		$\h\subset\so(k_1)$ is the holonomy algebra of the Levi-Civita connection of a Riemannian manifold,
		and $$\psi : \h\oplus\fb \to \mathR^{k_2}$$ is a surjective linear map such that $\psi|_{[\h, \h]}= \psi|_{[\fb, \fb]} = 0.$
				\item To decomposition \eqref{tangentSpaceDecomposition3} corresponds the decomposition of the torsion $T$ of the connection $\nabla$ at the point $x\in M$,
		$$T=p\wedge \zeta+\omega_E,$$ where $\zeta\in\wedge^2(\Real^k\oplus E)$ and $\omega_E\in\wedge^3E$.
				There exists a unique orthogonal decomposition
		$$E = E_1 \oplus E_0$$
		such that $\fb\subset\so(E_1)$,
		$$\omega_E=\omega_{E_1}+\varphi,\quad \omega_{E_1}\in\wedge^3 E_1,\quad \varphi\in (\wedge^2 E_1)\wedge E_0,$$
		and $$E_0=\varphi(\wedge^2 E_1)\cap \pr_E\zeta(\Real^k).$$
		There exists a   Berger subalgebra $\fb_0\subset\so(E_1)$   with the torsion $\omega_{E_1}$. The image $\varphi(E_0)\subset
		\wedge^2 E_1=\so(E_1)$ is a commutative subalgebra commuting with $\fb_0$, and it holds 
		$$\fb=\fb_0+\varphi(E_0).$$ Moreover,
		$$\fb\cdot \omega_{E_1}=0,\quad \fb\cdot\varphi=0,\quad \fb\cdot\zeta=0,$$ $$\h\cdot\zeta=0,\quad \omega_{E_1}(\pr_{E_1}\zeta(\Real^k))=0,\quad (\pr_{\wedge^2 E}\zeta)\cdot \omega_{E}=0.$$
		 
			\item Let $V_1,\dots,V_l$ be an orthogonal basis of $E_0$. Let $\theta_i=\varphi(V_i)$, and $X_i=\pr_{\Real^k}\zeta(V_i)$.
		Then each algebraic curvature tensor $R\in\R^T(\g)$ is defined by the equalities
		\begin{align*} R(q,X)&=P(X)+p\wedge K(X)+\sum_{i = 1}^l g(X,X_i)\theta_i,\quad X\in\Real^k,\\
		R(X,Y)&=R_0(X,Y)-p\wedge (P(X)Y-P(Y)X),\quad X,Y\in\Real^{k_1},\\
		R(Y,Z)&= p\wedge \sum_{i = 1}^l\theta_i(Y,Z)X_i+C(Y,Z),\quad Y,Z\in E_1, 
		\end{align*} where $R_0\in\R(\ker\psi|_\h)$, $P\in\mathcal{P}(\h)$, $K:\Real^k\to\Real^k$ is a symmetric linear map such that
		$$\pr_{\Real^{k_2}}K(X)=\psi\left(P(X)+\sum_{i = 1}^l g(X,X_i)\theta_i\right);$$
	 next, $$C(Y,Z)=C_0(Y,Z)+\sum_{i = 1}^l\theta_i(Y,Z)\theta_i,$$
	$C_0\in\R^{\omega_{E_1}}(\fb_0),$ and it holds  $$\psi(C(Y,Z))=\sum_{i = 1}^l \theta_i(Y,Z)\pr_{\Real^{k_2}}X_i.$$
	\end{itemize}
	
\end{theorem}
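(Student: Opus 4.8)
The plan is to run the argument of Theorem~\ref{Thvert} together with the analysis of the weakly irreducible Lorentzian factor from Section~\ref{Secpomega}; the only new feature is that the Lorentzian summand $L$ is now allowed to be partly horizontal, i.e.\ $\g\cap\so(L)$ need not vanish, which is what produces the extra subalgebra $\h$, the splitting $\Real^k=\Real^{k_1}\oplus\Real^{k_2}$ and the factor $p\wedge\Real^{k_1}$ inside $\g$. First I would fix the shape of $\g$. Its projection $\g_L=\pr_{\so(L)}\g$ is a weakly irreducible subalgebra of $\so(1,k+1)$, hence one of the four types listed in Section~\ref{Secpomega}. Decomposing $T$ with respect to~\eqref{tangentSpaceDecomposition3} as $T=\omega_L+\mu+\nu+\omega_E$ with $\omega_L\in\wedge^3L$, $\mu\in\wedge^2L\otimes E$, $\nu\in L\otimes\wedge^2E$, $\omega_E\in\wedge^3E$, the hypothesis $T\notin\wedge^3L\oplus\wedge^3E$ says that $\mu$ or $\nu$ is non-zero, while $\g\cdot T=0$ forces $\g$ to annihilate each of the four summands. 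Arguing exactly as in Lemma~\ref{lemwirr} and in the proof of Lemma~\ref{Lemmunu} (via~\eqref{invariantTorsionElement}), an element of $\g_L$ scaling $p$ — which is present precisely for those types of $\g_L$ that do not annihilate $p$ — is incompatible with the existence of a non-zero $\g$-invariant tensor in $\wedge^2L\otimes E\oplus L\otimes\wedge^2E$; hence $\g$ annihilates $p$. The classification then provides an orthogonal splitting $\Real^k=\Real^{k_1}\oplus\Real^{k_2}$ and a subalgebra $\h\subset\so(k_1)$ (the orthogonal part of $\g_L$), while $\g\subset\g_L\oplus\fb$ with $\g$ surjecting onto both factors yields a surjective linear map $\psi:\h\oplus\fb\to\Real^{k_2}$ vanishing on $[\h,\h]$ and $[\fb,\fb]$, and the stated matrix form. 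That $\h$ is a Riemannian holonomy algebra is seen as for the weakly irreducible Lorentzian connections of Section~\ref{Secpomega} (its orthogonal part playing the role of the screen holonomy, cf.\ Proposition~\ref{Proph} and~\cite{L07}), once one knows from the curvature analysis below that it is a Berger algebra in $\so(k_1)$.

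Next I would pin down the torsion. With $\g$ annihilating $p$ and containing $p\wedge\Real^{k_1}$, the invariance $\g\cdot T=0$ applied to $\omega_L,\mu,\nu$ is handled by the computations of Section~\ref{Secpomega} and of Lemma~\ref{Lemmunu}: each of these three pieces carries a factor $p$ and contains no $q$, so $T=p\wedge\zeta+\omega_E$ for some $\zeta\in\wedge^2(\Real^k\oplus E)$ and $\omega_E\in\wedge^3E$, and unwinding $\g\cdot T=0$ gives $\fb\cdot\zeta=\h\cdot\zeta=0$, $\fb\cdot\omega_E=0$, together with $\omega_{E_1}(\pr_{E_1}\zeta(\Real^k))=0$ and $(\pr_{\wedge^2E}\zeta)\cdot\omega_E=0$ once the splitting of $E$ below is available.

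For the structure on $E$ I would invoke the analysis of the $E$-factor already carried out in the proof of Theorem~\ref{Thvert}: the pair $(E,\fb)$ with the parallel $3$-vector $\omega_E$ and the parallel elements extracted from $\zeta$ is precisely the configuration treated there, and running that argument verbatim produces the orthogonal decomposition $E=E_1\oplus E_0$ with $\fb\subset\so(E_1)$, the splitting $\omega_E=\omega_{E_1}+\varphi$ with $\omega_{E_1}\in\wedge^3E_1$ and $\varphi\in(\wedge^2E_1)\wedge E_0$, a Berger subalgebra $\fb_0\subset\so(E_1)$ with torsion $\omega_{E_1}$, the commutative algebra $\varphi(E_0)$ commuting with $\fb_0$, $\fb=\fb_0+\varphi(E_0)$, and the identities $\fb\cdot\omega_{E_1}=\fb\cdot\varphi=0$. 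The characterization $E_0=\varphi(\wedge^2E_1)\cap\pr_E\zeta(\Real^k)$ — which makes the decomposition unique — then falls out of the Bianchi computation of the last step, $E_0$ being exactly the common image of the two maps whose outputs have to span the translational part $p\wedge\Real^{k_2}$ of $\g_L$.

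Finally I would compute $\R^T(\g)$. I would write the first Bianchi identity in the form~\eqref{Bident}, $\underset{X Y Z}{\Su}R(X,Y)Z=\sigma_T(X,Y,Z)$, after computing $\sigma_T$ from $T=p\wedge\zeta+\omega_E$, and evaluate it, together with the pair symmetry~\eqref{sympropR} and the fact that $R$ takes values in $\g$, on all triples of basis vectors. The symmetry first yields $R(p,\cdot)=0$ and the vanishing of $R$ on $\wedge^2\Real^k$ and off the listed slots; then the triples $\{q,e_i,e_j\}$, $\{q,e_i,Z\}$, $\{Y,Z,q\}$ and $\{Y,Z,W\}$ with $e_i\in\Real^k$ and $Y,Z,W\in E_1$ produce in turn the stated formulas for $R(q,X)$, for $R(X,Y)$ with $X,Y\in\Real^{k_1}$, and for $R(Y,Z)$ with $Y,Z\in E_1$, together with the constraints $P\in\mathcal{P}(\h)$, $R_0\in\R(\ker\psi|_\h)$, $K$ symmetric with $\pr_{\Real^{k_2}}K(X)=\psi(P(X)+\sum_i g(X,X_i)\theta_i)$, $C=C_0+\sum_i\theta_i(\cdot,\cdot)\theta_i$ with $C_0\in\R^{\omega_{E_1}}(\fb_0)$, and $\psi(C(Y,Z))=\sum_i\theta_i(Y,Z)\pr_{\Real^{k_2}}X_i$; the coupling of the $L$- and $E$-blocks of $R$ through $\psi$ is exactly what being $\g$-valued imposes. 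Feeding this into the property that $\g$ is a Berger algebra with torsion $T_x$, i.e.\ $\mathcal{L}(\R^T(\g))=\g$ (Section~\ref{SecPrel}), forces $\fb_0$ to be a Berger algebra with torsion $\omega_{E_1}$ and $\h$ to be a Berger algebra in $\so(k_1)$, which is the fact used above. I expect the main obstacles to be the casework ruling out the types of $\g_L$ that scale $p$ and the identification of $\h$ as a Riemannian holonomy algebra, and the long but routine Bianchi bookkeeping of the last step; the most delicate compatibility point is matching the two descriptions of $E_0$.
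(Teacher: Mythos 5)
Your overall strategy matches the paper's: show that $\g$ annihilates $p$, decompose the torsion, import the analysis of the $E$-factor from Theorem~\ref{Thvert}, and extract the curvature from the Bianchi identity. But there is a genuine gap at the step where you assert that ``the classification then provides \dots the stated matrix form'' of $\g$. The classification of weakly irreducible subalgebras of $\so(1,k+1)$ constrains only the projection $\g_L=\pr_{\so(L)}\g$; it says nothing about how $\g$ sits inside $\g_L\oplus\fb$. Writing $\frakk$ for the projection of $\g$ to $\so(k)\oplus\so(E)$, one knows only that $\frakk$ surjects onto $\h=\pr_{\so(k)}\frakk$ and onto $\fb=\pr_{\so(E)}\frakk$; a priori $\frakk$ could contain a ``diagonal'' ideal, the graph of a Lie algebra isomorphism between an ideal of $\h$ and an ideal of $\fb$, in which case $\g$ would \emph{not} have the form $\{p\wedge\psi(A+B)+A+B\}\zr p\wedge\Real^{k_1}$, since $A$ and $B$ could not be chosen independently. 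Ruling this out is the central difficulty of the proof, and the paper devotes a dedicated lemma to it (the statement that $\frakk=\frakk_1\oplus\frakk_2$, i.e.\ the diagonal part $\tilde\frakk$ is trivial). Its proof is not routine: one first shows $\tilde\frakk$ is commutative (it is spanned by the $\tilde\frakk$-components of the operators $R(e_i,q)$, whose $\fb$-parts annihilate $T$ and hence commute), then uses that $\h$ is spanned by images of tensors from $\mathcal{P}(\h)$ and $\R(\h)$ --- hence, by \cite{L07}, is a Riemannian holonomy algebra --- to apply the de~Rham splitting of $\Real^k$, and finally argues factor by factor: irreducible factors with non-trivial derived algebra must lie in $\frakk_1$ because $\tilde\frakk$ is abelian, while the $\so(2)$-factors require an explicit computation with $\mathcal{P}(\so(2))$ and the torsion components $\mu_i$ showing the relevant curvature operators have vanishing $\tilde\frakk$-component. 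None of this appears in your outline.

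A secondary, related point: you propose to see that $\h$ is a Riemannian holonomy algebra ``once one knows from the curvature analysis below that it is a Berger algebra in $\so(k_1)$'', but that final curvature analysis presupposes the matrix form of $\g$, which in turn needs the splitting $\frakk=\h\oplus\fb$ discussed above. The paper avoids this circularity by deriving the $\so(n)$-projections of the curvature operators (formulas \eqref{curvatureOperatorProjection1} ff.) directly from the Bianchi identity and the general shape of the torsion, \emph{before} the form of $\g$ is known; these projections simultaneously feed the lemma on $\tilde\frakk$ and the identification of $\h$ via \cite{L07}. The remaining parts of your plan --- the reduction of the torsion to $p\wedge\zeta+\omega_E$, the decomposition $E=E_1\oplus E_0$ imported from Theorem~\ref{Thvert}, and the Bianchi bookkeeping for $\R^T(\g)$ --- do follow the paper's route and are fine as a sketch.
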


\begin{proof}
Consider the $\g$-invariant decomposition
$$ \mathR^{1, n + 1} =L\oplus E= \mathR^{1, k + 1}\oplus\Real^{n-k}$$
as above. We assume that the induced representation of $\g$ in $\Real^{1,k+1}$ is weakly irreducible and $k\geq 2$. Using  arguments as in Lemmas \ref{lemwirr} and  \ref{Lemmunu}
it is not hard to show that the condition $T\not\in \wedge^3L\oplus\wedge^3E$
 implies that $\pr_{\mathfrak{so}(1, k + 1)}\g$ annihilates an isotropic vector $p$.  We see that $$\g\subset(\so(k)\zr p\wedge\Real^k)\oplus\so(E),\quad\pr_{p\wedge \Real^k}\g=p\wedge \Real^k.$$
  Let $V_0\subset E$ be the subspace consisting of vectors annihilated by $\g$. We obtain an orthogonal decomposition
 $$E = \mathR^m \oplus V_0 $$
 such that $\g$ does not annihilate any non-zero vector in $\Real^m$.
As in Section \ref{Secvert}, it is easy to see that the torsion has the form 
$$
T = p \wedge \omega_{\Real^k} + p \wedge \sum_{i = 1}^k e_i \wedge \mu_i + p \wedge \lambda + \omega_E ,
$$
where $ \omega_{\Real^k} \in \wedge^2 \mathR^k$, $\mu_i \in \mathR^m \oplus V_0$, $\omega_E\in\wedge^3E $ are such that
$ \omega_{\Real^k} $, $\sum_{i = 1}^k e_i \wedge\mu_i$, $\omega_E$
are annihilated by $\pr_{\so(n)}\g$.

Let us denote 
$$ \frakk_1 = {\so(k)} \cap \frakk,	\quad \frakk_2 = {\so(m)} \cap \frakk,$$
$$ \frakh = \pr_{\so(k)}\frakk,	\quad \fb = \pr_{\so(m)}\frakk.$$
Its easy to see that $\frakk_1 $ is an ideal in $\frakh$. Thus, there is a complementary ideal $\fraka$. The same holds for $\frakk_2 \subset \fb$ and we will denote the corresponding complementary ideal by $\frakc $.
The algebra $\frakk$ can be decomposed as a sum of ideals in the following way
$$ \frakk = \frakk_1 \oplus \frakk_2 \oplus \tilde\frakk, $$
and its easy to see that $ \tilde\frakk \cap \frakh = \tilde\frakk \cap \fb = 0$. This means that $\tilde\frakk$ is a graph of a Lie algebra isomorphism~$\fraka \to \frakc$.

From the invariance of $T$ we see that $\omega_E$ has the form 
$$ \omega_E = \omega_{\mathR^m} + \sum_{a=1}^{n_0} \theta_a \wedge V_a + \omega_0, $$
where $\omega_{\mathR^m}\in \wedge^3 \mathR^m$, $\theta_a \in \wedge^2 \mathR^m$, $\omega_0 \in \wedge^3 V_0$ and $V_1, \dots, V_{n_0}$ is an orthonormal basis of $V_0$.

\begin{lem}
	The Lie algebra $\tilde\frakk$ is trivial, i.e., $\frakk = \frakk_1 \oplus \frakk_2$.
\end{lem}
\begin{proof}
	
	Using the Bianchi identity as in Section \ref{Secvert} it is easy to see that the projections of curvature operators to $\so(n)$ are the following:
	\begin{align}
	&\pr_{\so(n)} R(e_i, q) =- P(e_i) -\sum_{a=1}^{n_0}(V_a, \mu_i) \theta_a \label{curvatureOperatorProjection1} \\
	&\pr_{\so(n)} R(e_i, e_j) = R_0(e_i, e_j) \in \frakk_1 \\
	&\pr_{\so(n)} R(X, Y) = C(X, Y) \in \frakk_2,\quad X,Y\in\Real^m,
	\end{align}
	and other curvature operators have zero projections to $\so(n)$;
	here $P\in\mathcal{P}(\h)$, $R_0\in\R(\h)$, $C\in\R^{\omega_E}(\fb).$
	
	We claim that the Lie algebra $ \tilde \frakk$ is commutative.
	The Lie algebra $ \tilde \frakk$ is spanned by the elements of the form $\pr_{\tilde \frakk} R(e_i,q)$, where $e_i \in \mathR^k$, $R \in \mathcal{R}^T(\g)$.
	We see from \eqref{curvatureOperatorProjection1} that $\sum_{a=1}^{n_0}(V_a, \mu_i) \theta_a \in \fb$. Since this element annihilates $T$, we obtain that $\sum_{a=1}^{n_0}(V_a, \mu_i) [\theta_a, \theta_b] = 0$. This proves the claim.

It is clear that the subalgebra $\h\subset\so(k)$ is spanned by the images of the elements $P\in\mathcal{P}(\h)$, $R_0\in\R(\h)$ defined by all $R\in\R^T(\g)$. This implies \cite{L07} that $\h\subset\so(k)$ is the holonomy algebra of the Levi-Civita connection of  a Riemannian manifold. The de~Rham theorem implies the decompositions  
		$$ \mathR^n = \mathR^{d_1}\oplus\dots\oplus \mathR^{d_l}\oplus\Real^{d_0},$$
	$$ \frakh = \h_1\oplus\dots\oplus\h_l,$$
	where $\frakh_\alpha\subset\so(d_\alpha)$ is irreducible, $\alpha=1,\dots, l$. We assume that basis $e_1, \dots, e_k$ is compatible with this decomposition.

	Suppose that $\frakh_\alpha' \ne 0$. Then the induced representation of $\frakh_\alpha'$ on $\Real^{d_\alpha}$ is either irreducible, or $\frakh'_\alpha=\so(m_\alpha)\oplus\so(2)$ and $\Real^{d_\alpha}=\Real^{m_\alpha}\otimes\Real^2$, $d_\alpha=2m_\alpha$ \cite{Besse}. Since $\h$ and $\tilde\frakk$ are ideals in $\frakk$, $\h'_\alpha$ is a semisimple ideal in $\h$, and $\tilde \frakk$ is commutative, we see that $\h'_\alpha\subset\frakk_1$.
	Since $T$ is invariant, we conclude that $\mu_i=0$ if $e_i\in\Real^{d_\alpha}$. From this and \eqref{curvatureOperatorProjection1} it follows that $\pr_{\so(n)} R(e_i, q) = -P(e_i)$ if $e_i\in\Real^{d_\alpha}$. This implies that $\h_\alpha\subset\frakk_1$.
	
	Suppose that $\frakh_\alpha^\prime = 0$. Then $\frakh_\alpha = \so(2)$, and $\mathR^{d_\alpha} = \mathR^2$. 
	Without loss of generality we may assume that $\alpha=1$. 
	For each $P\in\mathcal{P}(\so(2))$, the basis $e_1,e_2\in\Real^2$ may be chosen in such a way that 
	 $$ P(e_1) = -c  e_1 \wedge e_2,\quad  P(e_2) = 0,\quad c\in\Real.$$
	If $c = 0$, then  from \eqref{curvatureOperatorProjection1} it follows that $\pr_{\tilde\frakk} R(e_j, q) = 0$. Suppose that  $c \ne 0$. 
	We have 
	\begin{align}
	&\pr_{\so(n)} R(e_1, q) = -c \, e_1 \wedge e_2 - \sum_{a=1}^{n_0}(V_a, \mu_1) \theta_a, \label{curvatureOperatorProjection1_1} \\
	&\pr_{\so(n)} R(e_2, q) =  -\sum_{a=1}^{n_0} (V_a, \mu_2) \theta_a. 
	\end{align}
	From the equalities $ R(e_1, q) \cdot T=R(e_2, q) \cdot T = 0$ it follows that 
	\begin{align*}
	 c  \mu_1 &= \sum_{a=1}^{n_0}(V_a, \mu_1) \theta_a \mu_2 , \\
	 -c \mu_2 &= \sum_{a=1}^{n_0}(V_a, \mu_2) \theta_a \mu_1.
		\end{align*}
	Since $\sum_{a=1}^{n_0}(V_a, \mu_1)\theta_a,\sum_{a=1}^{n_0}(V_a, \mu_2)\theta_a\in\fb\subset\so(m)$, the  equations imply $\mu_1,\mu_2\in\Real^m$. Consequently, $(V_a,\mu_1)=(V_a,\mu_2)=0$, which implies $\mu_1=\mu_2=0$.
Now from \eqref{curvatureOperatorProjection1_1} we see that $\pr_{\tilde\frakk} R(e_1, q)=\pr_{\tilde\frakk} R(e_2, q) = 0$. This proves the lemma.
	\end{proof}

Thus, $$\pr_{\so(n)}\g=\h\oplus\fb,\quad \h\subset\so(k),\quad\fb\subset\so(m).$$
We already know  that $\pr_{\mathfrak{so}(1, k + 1)}\g\subset\so(1, k + 1)$ is weakly irreducible and it annihilates the isotropic vector $p$. So,  $\pr_{\mathfrak{so}(1, k + 1)}\g$ is a weakly irreducible subalgebra of type 2 or 4. We unify these algebras assuming $\psi=0$ for algebras of type 2. Now it is clear that $\g\subset\so(1,n+1)$ is as in the statement of the theorem.

The condition $\g\cdot T$ implies now that 
$$\sum_{i=1}^ke_i\wedge \mu_i=\sum_{e_i\in\Real^{d_0}} e_i\wedge \mu_i,\quad \mu_i\in V_0.$$
This expression may be considered as a map from $\Real^{d_0}$ to $V_0$.
Let us denote by $E'_0$ its image, $E'_0\subset V_0$. And consider an orthogonal decomposition $$E=E'_1\oplus E'_0.$$
Using this decomposition, $T$ may be rewritten as 
$$ p \wedge \omega_{\Real^k} + p \wedge \sum_{a = 1}^{n_0} X_a \wedge V_a +p \wedge \lambda +\omega_{E}, \quad \omega_E=\omega_{E'_1}+ \sum_{a = 1}^{n_0} \theta_a \wedge V_a+\omega_0,$$
where now $V_1,\dots, V_{n_0}$ is an orthonormal basis of $E'_0$, $X_1,\dots,X_{n_0}\in\Real ^k$ are vectors annihilated by $\h$,
$\lambda\in\wedge^2E$, $\omega_{E'_1}\in\wedge^3E'_1$, $\theta_1,\dots,\theta_{n_0}\in\so(E'_1)$, $\omega_0\in\wedge^3E'_0$.
By the construction, the vectors $X_1,\dots,X_{n_0}$ are linearly independent. As above, the Bianchi identity implies
$$\pr_{\so(m)}R(X,q)=-\sum_{a=1}^{n_0}(X,X_a)\theta_a,\quad X\in\Real^k.$$
Since the vectors $X_1,\dots,X_{n_0}\in\Real ^k$ are linearly independent, we conclude that $\theta_a\in\fb$ for all $a=1,\dots,n_0$.
The condition $\fb\cdot T=0$ implies that the endomorphisms $\theta_a$, $a=1,\dots,n_0$, are mutually commuting. As in Section \ref{Secvert} it can be shown that this implies $\omega_0=0$.

Consider now the expression  $\sum_{a = 1}^{n_0} \theta_a \wedge V_a$ as a map from $E'_0$ to $\fb$. Let $E_0\subset E'_0$ be the orthogonal complement to the kernel of this map. Let finally $E_1$ be the orthogonal complement to $E_0$ in $E$ and consider the decomposition $$E=E_1\oplus E_0.$$ Now it is obvious that $T$ may be represented as in the statement of the theorem. The expressions for the curvature tensor may be obtained from the Bianchi identity as we did it in Section \ref{Secvert}.

\end{proof}

	\begin{rem}
The torsion given in the statement of Theorem \ref{ThdimL>3} may be written in the form
$$
T = p \wedge \left(\omega_{\Real^k} +  \sum_{i = 1}^l X_i \wedge V_i+ \sum_{i = 1}^s Y_i \wedge U_i + \lambda\right)+\omega_E,\quad \omega_E=\omega_{E_1}+ \sum_{i = 1}^l \theta_i \wedge V_i, 
$$

where $\omega_{\Real^k}\in\wedge^2\Real^k$, $\h\cdot\omega_{\Real^k}=0$; $X_1,\dots, X_l,Y_1,\dots,Y_s\in\Real^k$ are linearly independent vectors annihilated by $\h$; $V_1,\dots, V_l$ is an orthonormal basis  of $E_0$; $\theta_1,\dots,\theta_l\in\so(E_1)$ are linearly independent mutually commuting elements commuting with $\fb_0$;  $U_1,\dots,U_s\in E_1$ are vectors annihilated by 
$$\fb=\fb_0+\left<\theta_1,\dots,\theta_l\right>\subset\so(E_1);$$
$\omega_{E_1}\in\wedge^3 E_1$, $\fb\cdot\omega_{E_1}=0$; and
$\lambda\in\wedge^2 E$, $\fb\cdot\lambda=0$. Moreover, it holds
 $\omega_{E_1}(U_i)=0$,  and $\lambda\cdot\omega_E=0$.

\end{rem}

\begin{rem} {\rm
Note that the decomposition \eqref{tangentSpaceDecomposition3} is not defined uniquely. Suppose that a decomposition \eqref{tangentSpaceDecomposition3} is fixed. Let as above $\fb\subset\so(E)$ be the projection of the holonomy algebra $\g$ to
$\so(E)$. Let $V_0\subset E$ be the subspace consisting of vectors annihilated by $\fb$. Let $V_1\subset E$ be the orthogonal complement to $V_0$ in $E$. We obtain the decomposition 
$$E=V_1\oplus V_0.$$ In notation of Theorem \ref{ThdimL>3} we have $V_1\subset E_1$, and $E_0\subset V_0$. Fix a vector $X_0\in V_0$ and consider the vector space $$\tilde E=V_1\oplus \tilde V_0,\quad \tilde V_0=\{X-(X,X_0)p|X\in V_0\}.$$
Let $\tilde L$ be the orthogonal complement to $\tilde E$ in $\Real^{1,n+1}.$ The vector space $\tilde L$ is spanned by the vectors $p,e_1,\dots,e_k,q+X_0 - \frac{1}{2}g(X_0,X_0)p.$
We obtain the new $\g$-invariant decomposition
\begin{equation}\label{decomnew}
\Real^{1,n+1}=\tilde L\oplus\tilde E.\end{equation}
Using the decomposition $E=V_1\oplus V_0$, the tensor $\omega_E$
may be written as $$\omega_E=\omega_{V_1}+\omega_{V_0}+\varphi,$$
where $\omega_{V_1}\in\wedge^3 V_1$, $\omega_{V_0}\in\wedge^3 V_0$ $\varphi\in\wedge^2V_1\,\otimes V_0.$
Let $Z_1,\dots, Z_s$ be an orthonormal basis of $V_0$. The tensor $\varphi$ may be written in the form 
$\varphi=\sum_{i=1}^s\eta_i\wedge Z_i$.
The condition $[\fb,\lambda]=0$ implies $$\lambda=\lambda_1+\lambda_0,\quad \lambda_1\in\wedge^2 V_1,\quad \lambda_0\in\wedge^2 V_0.$$
}\end{rem}

\begin{ex}{\rm
 Let $(M_1,g_1)$ be a Lorentzian manifold 
of dimension $k_1+k_2+2$, $k_1\geq 0$, $k_2\geq 0$ with the holonomy algebra $$\g_1=\{p\wedge \psi_1(A)+A|A\in\h\}\zr p\wedge \Real^{k_1}\subset\so(1,k_1+k_2+1),$$
where $\h\subset\so(k_1)$ is the holonomy algebra of a Riemannian manifold and $\psi_1:\h\to\Real^{k_2}$ is  an arbitrary linear map with $\psi_1|_{[\h,\h]}=0$. Such spaces exist according to \cite{GalRMS}.
Let $(N_0,b_0,T_0)$ be the product of $(M_1,g_1)$ and of a Riemannian  geometry with a parallel skew torsion $T_0$ and holonomy algebra $\fb_0\subset\so(m)$, $m=\dim N_0$.  
 It is clear that the holonomy algebra of $(M_0,g_0,T_0)$ is $\g_1\oplus\fb_0\subset\so(1,k_1+k_2+1)\oplus\so(m)$.
Let $\theta_1,\dots,\theta_{l}\in\so(m)$ be linearly independent mutually commuting endomorphisms commuting with $\fb_0$. Let $\n_0=\left<\theta_1,\dots,\theta_{l}\right>$. Suppose that $\n_0\cap \fb_0=0$. Let $\fb=\fb_0\oplus\n_0$. Let $$\psi_2:\fb\to\Real^{k_2}$$ be a  linear map which is zero on $\fb_0$ and such that the map $\psi=\psi_1+\psi_2:\h\oplus\fb\to\Real^{k_2}$ is surjective.
Let finally $$\sigma_i=p\wedge\psi(\theta_i)+\theta_i,\quad i=1,\dots,l.$$
The construction of Section \ref{secconstr} gives us a geometry $(M,g,\nabla)$ with the torsion
$$T=p\wedge\sum_{i=1}^l\psi(\theta_i)\wedge V_i+ \sum_{i=1}^l\theta_i\wedge V_i+T_0$$
and the holonomy algebra
$$ 
\frakg = \{ p \wedge \psi(A) + A \, |  A \in \h\oplus\fb \}\zr p\wedge \Real^{k_1}$$ contained in $\so(1,k_1+k_2+1)\oplus\so(m)\subset\so(1,k_1+k_2+1)
\oplus\so(l+m).
$
}
\end{ex}

For a geometry from the statement of the theorem,  one may consider the decomposition \eqref{HVLorv} and as in Section \ref{Secvert} apply to it  \cite[Theorem 4.8]{CMS21}.
Let us now show that some results from \cite{CMS21} do not hold true in the Lorentzian signature.

\begin{ex}{\rm
	Let $(M_0,g_0,\omega_E)$ be a Riemannian geometry with a parallel skew-symmetric torsion and a parallel 2-from $\lambda$ satisfying $\lambda\cdot\omega_E=0$. Denote by $\fb$ its holonomy algebra.  Let $(N_0,h_0,p\wedge\omega)$ 
	be a Lorentzian geometry as in Section \ref{Secpomega} and with the holonomy algebra $\g_L$. By Lemma \ref{parallel2form}, the 2-form $\lambda$ is closed. Let us suppose that $\lambda$ is exact, i.e., there exists a 1-from $\kappa$ such that $d\kappa=\lambda$.  Consider the manifold $M=M_0\times N_0$ and the product of the above geometries.
	Similarly, as in Section \ref{secconstr},
	consider the distribution $\mathcal{L}$ on $M$ spanned by the vector fields
	$$\bar X=X-\kappa(X)p,\quad X\in\Gamma(TM_0).$$
	As in Section \ref{secconstr} this defines a geometry with the torsion 	$$\bar \omega_E+p\wedge \omega+p\wedge \bar\lambda,$$
	where $\bar \omega_E$ and $\bar\lambda$ are the tensors on $\mathcal{L}$ corresponding to $\omega_E$ and $\lambda$.
	It is easy to check that the distributions $\mathcal{L}$ and $TN_0$ are parallel with respect to this connection, and the holonomy algebra of this connection coincides with $\fb\oplus\g_L$. Suppose that $\fb$ is irreducible and $\g_L$ is weakly irreducible.
	Then both distributions  $\mathcal{L}$ and $TN_0$ are horizontal.
	Since $\lambda\neq 0$, the geometry is indecomposable. 
		In the same time,  Lemma 3.6 from \cite{CMS21} implies that if a Riemannian manifold admits two orthogonal horizontal parallel   
		distributions, then the geometry is locally decomposable.
	 This shows a substantial difference between Riemannian and Lorentzian geometries with parallel skew-symmetric torsion.
	
}
\end{ex}

\section{Reducible case: $\dim L= 2,3$}\label{SecL=2,3}

\begin{theorem}\label{ThL=2}
	Let $(M, g, \nabla)$ be a Lorentzian geometry with a parallel skew-symmetric torsion $T$,  and let $\frakg\subset\so(1,n+1)$ be  the holonomy algebra of the connection $\nabla$ at a point $x\in M$. Suppose that the holonomy representation of $\frakg$ in $T_x M$ decomposes into a non-trivial orthogonal direct sum 
	\begin{equation}
	\label{tangentSpaceDecomposition2}
	T_x M = L \oplus E=\mathR^{1, 1}\oplus\Real^n
	\end{equation}
	 such that the induced representation of $\g$ in the  Lorentzian part $L=\mathR^{1, 1}$ is non-trivial, and $T\not\in\wedge^3E$.
	Then
	\begin{itemize}
		
		\item There exists a non-zero  vector $v\in E$, an orthogonal decomposition $E=\Real v\oplus E_1$, a Berger subalgebra $\fb_0\subset\so(E_1)$ with the torsion $\omega_{E_1}\in\wedge^3 E_1$, an endomorphism $\theta\in\wedge^2 E_1$ commuting with $\fb_0$ such that the torsion is given by 
				$$ T = p \wedge q \wedge v + \theta \wedge v + \omega_{E_1}. $$

		\item The holonomy algebra $\g$ is one of the following:		
		\begin{align*} \g_1 &= \mathR p\wedge q \oplus \fb, \\
\g_2 &= \{ \psi(B) p\wedge q + B \, | \, B \in \fb \},\end{align*}
		where the subalgebra $\fb\subset\so(E_1)$ is spanned by $\fb_0$ and $\theta$, and $\psi : \fb \to \mathR$ is a non-zero linear map with $\psi|_{[\fb,\fb]} = 0$.

		\item 
			Each algebraic curvature tensor $R\in\R^T(\g)$ is given by the following formulas:
		\begin{align*}
		& R(p,q) = \alpha p \wedge q - g(v,v) \theta, \\
		& R(X,Y) = g(v,v) \theta(X,Y) p \wedge q + C(X,Y), \quad X,Y\in E,\end{align*}
		where $$C(X,Y)=C_0(X,Y)+\theta(X,Y)\theta,\quad C_0\in\R^{\omega_{E_1}}(\fb_0),\quad \alpha\in\Real.$$
		In the case of the holonomy algebra $\g_2$ it holds $\alpha=-\psi(\theta)$ and $\psi(C(X,Y))= g(v,v) \theta(X,Y).$
			\end{itemize}
\end{theorem}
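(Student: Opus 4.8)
The plan is to follow the pattern of the proofs of Theorems~\ref{Thvert} and~\ref{ThdimL>3}, using that $\dim L=2$ makes the Lorentzian bookkeeping almost trivial: $\so(L)\cong\so(1,1)$ is one-dimensional, spanned by $p\wedge q$, so its only subalgebras are $0$ and itself. First I would determine $\g$. Since the $\g$-representation on $L$ is non-trivial, $\pr_{\so(L)}\g=\Real\,p\wedge q$. If $\g\cap\so(L)\neq 0$ then $p\wedge q\in\g$, and because $p\wedge q$ is central in $\so(L)\oplus\so(E)$ this gives $\g=\Real\,p\wedge q\oplus\fb$ with $\fb=\pr_{\so(E)}\g$ (case $\g_1$). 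Otherwise $\g\cap\so(L)=0$ and $\g$ is the graph of a surjective homomorphism $\psi\colon\fb\to\so(L)=\Real$, which is non-zero and kills $[\fb,\fb]$ since $\Real$ is abelian (case $\g_2$). In both cases $\fb\subset\so(E)$ and $\g$ acts on $L$ only through $\Real\,p\wedge q$.

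Next I would pin down the torsion. In the $\g$-invariant decomposition $\wedge^3T_xM=(\wedge^2L\otimes E)\oplus(L\otimes\wedge^2E)\oplus\wedge^3E$ (here $\wedge^3L=0$ and $\wedge^2L=\Real\,p\wedge q$) write $T=p\wedge q\wedge V+\nu+\omega_E$. Evaluating $\g\cdot T=0$ on $p\wedge q$ (case $\g_1$), or on a general element $\psi(B)p\wedge q+B$ and pairing the resulting identities with $\nu$ using that the induced metric on $\wedge^2E$ is positive definite and $\psi\neq 0$ (case $\g_2$), forces $\nu=0$; the same condition gives $\fb V=0$ and $\fb\cdot\omega_E=0$. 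Since $T\notin\wedge^3E$ we have $V\neq 0$, and I set $v:=V$ and $E_1:=v^\bot\cap E$, so that $\fb\subset\so(E_1)$. Decomposing $\omega_E=\omega_{E_1}+\theta\wedge v$ with $\omega_{E_1}\in\wedge^3E_1$ and $\theta\in\wedge^2E_1$, and splitting $\fb\cdot\omega_E=0$ by type, yields $\fb\cdot\omega_{E_1}=0$ and $\fb\cdot\theta=0$ (so $\theta$ is central in $\fb$), that is, $T=p\wedge q\wedge v+\theta\wedge v+\omega_{E_1}$.

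For the curvature I would argue as in Section~\ref{Secvert}. For $R\in\R^T(\g)$ write $R(X,Y)=\alpha(X,Y)\,p\wedge q+C(X,Y)$ with $C(X,Y)\in\fb\subset\so(E_1)$. Using \eqref{sympropR} together with the facts that $\g$ annihilates $v$ and acts on $L$ only through $\Real\,p\wedge q$, one checks that $R(p,\cdot)=0$, $R(q,\cdot)|_E=0$ and $R(v,\cdot)=0$, so the curvature is carried by $R(p,q)$ and $R|_{\wedge^2E_1}$, and moreover $\alpha(X,Y)=g(v,v)\,\theta(X,Y)$ for $X,Y\in E_1$ (with $\theta(X,Y):=g(\theta X,Y)$). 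One then computes $\sigma_T$ from the expression for $T$, the relevant contractions being $T(v)=g(v,v)(p\wedge q+\theta)$ and $T(X)=(\theta X)\wedge v+\omega_{E_1}(X)$ for $X\in E_1$, and substitutes into the first Bianchi identity~\eqref{Bident}. The triple $(p,q,X)$ with $X\in E_1$ yields $C(p,q)=-g(v,v)\theta$, hence $R(p,q)=\alpha\,p\wedge q-g(v,v)\theta$; since $g|_E$ is positive definite and $v\neq0$, this already forces $\theta=-g(v,v)^{-1}C(p,q)\in\fb$, and therefore $\theta\cdot\omega_{E_1}=0$. The triple $(X,Y,Z)$ in $E_1$ then produces, once the potential $\Real v$-valued part of $\sigma_T|_{\wedge^3E_1}$ cancels (which happens precisely because $\theta\cdot\omega_{E_1}=0$), the decomposition $C(X,Y)=C_0(X,Y)+\theta(X,Y)\theta$ of the statement, with $C_0$ a curvature tensor with torsion $\omega_{E_1}$. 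Taking $\fb_0$ to be the span of the images of all such $C_0$ gives a Berger subalgebra of $\so(E_1)$ with torsion $\omega_{E_1}$ commuting with $\theta$; since the $\so(E)$-projections of all curvature operators are spanned by the $C_0$'s together with multiples of $\theta$, one gets $\fb=\fb_0+\left<\theta\right>$, and in the case $\g_2$ the requirements $R(p,q),R(X,Y)\in\g_2$ give $\alpha=-\psi(\theta)$ and $\psi(C(X,Y))=g(v,v)\theta(X,Y)$.

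The hard part will be this last step: the careful tracking of coefficients in~\eqref{Bident} needed to split $C$ into the part $C_0\in\R^{\omega_{E_1}}(\fb_0)$ and the rank-one correction $\theta(X,Y)\theta$, and especially the verification that the $\Real v$-valued component of $\sigma_T$ restricted to $\wedge^3E_1$ vanishes. This rests on the inclusion $\theta\in\fb$, which is extracted from the $(p,q,X)$-component of the Bianchi identity together with the positivity of the metric on $E$; once it is available, the remaining checks (that $\fb_0$ is a Berger algebra with torsion $\omega_{E_1}$, that $\fb=\fb_0+\left<\theta\right>$, and the $\g_2$-specific relations) are of the same routine linear-algebraic type already carried out in Sections~\ref{Secvert} and~\ref{SecdimL>3}.
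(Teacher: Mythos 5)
Your proposal follows exactly the route the paper itself prescribes: the paper gives no separate argument for Theorem~\ref{ThL=2} beyond the remark that the proofs are direct and use the techniques of the previous sections, and your sketch is a correct instantiation of those techniques (splitting $T$ by type, killing the mixed components via $\g$-invariance together with positive definiteness of $g|_E$ in the $\g_2$ case, extracting $\theta\in\fb$ from the $(p,q,X)$-component of the Bianchi identity, and decomposing $C$ via the $E_1^3$-component). The one place where the "careful tracking of coefficients" you defer actually matters is that, with the paper's conventions, $T(v,Z)=g(v,v)\theta(Z)$ for $Z\in E_1$, so the $\wedge^3E_1$-part of $\sigma_T$ contributes $g(v,v)\,\underset{X Y Z}{\Su}\theta(X,Y)\theta(Z)$ and the rank-one correction comes out as $g(v,v)\,\theta(X,Y)\theta$ (exactly as in Theorem~\ref{ThL=3}) rather than $\theta(X,Y)\theta$, unless one first normalizes $v$ to unit length.
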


\begin{theorem}\label{ThL=3}
	Let $(M, g, \nabla)$ be a Lorentzian geometry with a parallel skew-symmetric torsion~$T$,   and let $\frakg\subset\so(1,n+1)$ be  the holonomy algebra of the connection $\nabla$ at a point $x\in M$. Suppose that the holonomy representation of $\frakg$ in $T_x M$ decomposes into a non-trivial orthogonal direct sum 
	\begin{equation}
	\label{tangentSpaceDecomposition1}
	T_x M = L \oplus E=\mathR^{1, 2}\oplus\Real^{n-1}
	\end{equation}
	such that the induced representation of $\g$ in the  Lorentzian part $L=\mathR^{1, 2}$ is weakly irreducible, and $T\not\in\wedge^3L\oplus\wedge^3E$.
	Then
	\begin{itemize}
		
		\item There exists a vector $v\in E$, a Berger subalgebra $\fb_0\subset\so(E_1)$ with the torsion $\omega_{E_1}\in\wedge^3 E_1$, where $E_1\subset E$ is the orthogonal complement to $v$ in $E$. Next, there exist a number $\alpha\in\Real$,  endomorphisms $\theta\in\wedge^2 E_1$, $\lambda\in\wedge^2 E$ commuting with $\fb_0$ such that the torsion is given by 
	$$T = p \wedge(\alpha  e_1 \wedge q +  e_1 \wedge v +  \lambda)+  \omega_{E_1}+\theta \wedge v.$$

			\item The holonomy algebra $\g$ is one of the following:		
	\begin{align*} 
		\g &= \mathR p\wedge e_1 \oplus \fb,\\
	\g &= \{ \psi(B) p\wedge e_1 + B \, | \, B \in \fb \},
	\end{align*}
	where $\fb\subset\so(E_1)$ is the subalgebra spanned by the endomorphisms $g(v,v)\theta+\alpha\lambda$ and $C_0(X,Y)+g(v,v)\theta(X,Y)\theta$ for all $C_0\in\R^{\omega_{E_1}}(\fb_0)$, $X,Y\in E$, and $\psi : \fb \to \mathR$ is a non-zero linear map with $\psi|_{[\fb,\fb]} = 0$.	
	
	\item Each algebraic curvature tensor $R\in\R^T(\g)$ is given by
	\begin{align*}
	 R(q,e_1) &= \beta p \wedge e_1 + g(v,v)\theta + \alpha \lambda, \\
	 R(X, Y) &= g(g(v,v)\theta X + \alpha\lambda X, Y)p \wedge e_1 + C(X, Y),  \quad X,Y\in E,
	\end{align*}
		where $$C(X,Y)=C_0(X,Y)+g(v,v)\theta(X,Y)\theta,\quad C_0\in\R^{\omega_{E_1}}(\fb_0),\quad \beta\in\Real.$$
In the case of the holonomy algebra $\g_2$ it holds $\beta=\psi(g(v,v)\theta + \alpha \lambda)$ and $\psi(C(X,Y))= g(g(v,v)\theta X + \alpha\lambda X, Y).$
	\end{itemize}
\end{theorem}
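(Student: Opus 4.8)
The plan is to follow the template of the proofs of Theorems~\ref{Thvert} and~\ref{ThdimL>3}, specialised to the boundary value $k=1$ excluded there. First I would decompose the torsion along the $\g$-invariant splitting
$$\wedge^3 T_xM=\wedge^3 L\;\oplus\;(\wedge^2 L\otimes E)\;\oplus\;(L\otimes\wedge^2 E)\;\oplus\;\wedge^3 E,$$
writing $T=\omega_L+\mu+\nu+\omega_E$; since $\nabla T=0$ the holonomy algebra annihilates $T_x$, hence each of the four summands separately. Arguing as in Lemmas~\ref{lemwirr} and~\ref{Lemmunu} and as in the first step of the proof of Theorem~\ref{ThdimL>3}, I would then show that the hypothesis $T\notin\wedge^3 L\oplus\wedge^3 E$ (i.e.\ $\mu\neq0$ or $\nu\neq0$) forces $\pr_{\so(1,2)}\g$ to be weakly irreducible and to annihilate an isotropic vector $p$: the case $\pr_{\so(1,2)}\g=\so(1,2)$ is impossible since $\so(1,2)$ is non-compact simple while $\fb:=\pr_{\so(E)}\g$ is compact, so $\g$ would contain $\so(1,2)\oplus0$ and then $\mu,\nu$ would be $\so(1,2)$-invariant in the $\wedge^2 L$-factor, hence zero; and an element of $\g$ projecting onto $p\wedge q$ is impossible because it acts on the $\wedge^2 L\otimes E$- and $L\otimes\wedge^2 E$-parts of $T$ by the scalar $-1$ plus a skew-symmetric operator on the positive definite space $E$, which has no fixed non-zero vector. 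Since among the weakly irreducible subalgebras of $\so(1,2)$ only $\Real p\wedge e_1$ annihilates $p$, this gives $\pr_{\so(1,2)}\g=\Real p\wedge e_1$. As $\Real p\wedge e_1$ is one-dimensional and central in $\Real p\wedge e_1\oplus\so(E)$, the algebra $\g$ is then either $\Real p\wedge e_1\oplus\fb$ or the graph $\{\psi(B)p\wedge e_1+B\mid B\in\fb\}$ of a non-zero linear map $\psi:\fb\to\Real$ with $\psi|_{[\fb,\fb]}=0$; these are the two cases in the statement.

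Next I would pin down $T$. The summand $\omega_L\in\wedge^3 L$ is a multiple $\alpha\,p\wedge e_1\wedge q$ of the volume form of $L$, which fixes $\alpha$. Solving $\xi\cdot\mu=0$ and $\xi\cdot\nu=0$ for all $\xi\in\g$, componentwise in the basis $p\wedge e_1,\,p\wedge q,\,e_1\wedge q$ of $\wedge^2 L$ — exactly as in Lemma~\ref{Lemmunu}, repeatedly composing with the projection onto $\g$, using skew-symmetry of the $\fb$-action, and using positive definiteness of the metric on $E$ and on $\wedge^2 E$ — gives $\mu=p\wedge e_1\wedge v$ and $\nu=p\wedge\lambda$ with $v\in E$, $\lambda\in\wedge^2 E$ and $\fb\cdot v=\fb\cdot\lambda=\fb\cdot\omega_E=0$. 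Putting $E=\Real v\oplus E_1$ then gives $\fb\subset\so(E_1)$, and decomposing $\omega_E=\omega_{E_1}+\theta\wedge v$ with $\omega_{E_1}\in\wedge^3 E_1$, $\theta\in\wedge^2 E_1$ gives $\fb\cdot\omega_{E_1}=\fb\cdot\theta=0$, so $\theta$ and $\lambda$ commute with $\fb$. This is the asserted form $T=p\wedge(\alpha e_1\wedge q+e_1\wedge v+\lambda)+\omega_{E_1}+\theta\wedge v$.

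For the curvature I would use \eqref{sympropR} together with $\g\cdot p=0$ to see that $R(p,\cdot)=0$ and $R(X,Y)=0$ whenever $X\in L$, $Y\in E$, so that the only possibly non-zero blocks of $R$ are $R(q,e_1)$ and $R|_{\wedge^2 E}$. Computing $\sigma_T$ from the explicit $T$ and writing the first Bianchi identity \eqref{Bident} on the triples $(q,e_1,X)$, $(q,X,Y)$, $(e_1,X,Y)$, $(X,Y,Z)$ with $X,Y,Z\in E$ then yields
$$R(q,e_1)=\beta\,p\wedge e_1+g(v,v)\theta+\alpha\lambda,\qquad R(Y,Z)=g\bigl((g(v,v)\theta+\alpha\lambda)Y,Z\bigr)p\wedge e_1+C(Y,Z),$$
with $C(Y,Z)=C_0(Y,Z)+g(v,v)\theta(Y,Z)\theta$ and $C_0\in\R^{\omega_{E_1}}(\fb_0)$. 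Finally, by the Proposition of Section~\ref{SecPrel} the holonomy algebra is a Berger algebra with torsion $T_x$, i.e.\ $\g=\mathcal{L}(\R^T(\g))$; hence $\fb$ is spanned by $g(v,v)\theta+\alpha\lambda$ together with all the endomorphisms $C(Y,Z)=C_0(Y,Z)+g(v,v)\theta(Y,Z)\theta$ occurring in curvature tensors, and the span $\fb_0$ of the corresponding $C_0(Y,Z)$ is, by construction, a Berger subalgebra of $\so(E_1)$ with torsion $\omega_{E_1}$; moreover $\fb_0$ commutes with $\theta$. In the graph case $\g=\g_2$, the conditions $R(q,e_1)\in\g_2$ and $R(Y,Z)\in\g_2$ contribute precisely $\beta=\psi\bigl(g(v,v)\theta+\alpha\lambda\bigr)$ and $\psi\bigl(C(Y,Z)\bigr)=g\bigl((g(v,v)\theta+\alpha\lambda)Y,Z\bigr)$.

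The hard part will not be conceptual but the bookkeeping: carrying out the componentwise solution of $\g\cdot T=0$ and of the Bianchi identity relative to the Witt basis and the splitting $E=\Real v\oplus E_1$ without error, and checking that the remaining triples in the Bianchi identity impose no further relations. The genuinely Lorentzian ingredients — and the points where one does not simply transcribe the Riemannian results of \cite{CMS21} — are the compactness of $\fb$ and the positive definiteness of $E$, used above to collapse $\pr_{\so(1,2)}\g$ onto $\Real p\wedge e_1$ and to remove the superfluous components of $T$.
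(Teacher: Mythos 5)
Your proposal is correct and follows exactly the route the paper intends: the paper's own ``proof'' of Theorem~\ref{ThL=3} consists of the single remark that the argument is direct, uses the techniques of the preceding sections, and that the $\wedge^3L$-component of $T$ is a multiple of the volume form; your four-block decomposition of $T$, the Lemma~\ref{Lemmunu}-style componentwise elimination, the use of \eqref{sympropR}, the Bianchi identity, and the Berger property $\g=\mathcal{L}(\R^T(\g))$ are precisely those techniques. One sub-argument is imprecise as written: $p\wedge q$ does \emph{not} act on $\wedge^2L\otimes E$ or on $L\otimes\wedge^2E$ by ``$-1$ plus a skew-symmetric operator''; it acts with weights $-1,0,+1$ on the three slots $p\wedge e_1$, $p\wedge q$, $e_1\wedge q$ of $\wedge^2L$ (respectively $p$, $e_1$, $q$ of $L$), so your invertibility argument for $B\pm 1$ ($B$ skew on a Euclidean space) only kills the weight-$\pm1$ components of $\mu$ and $\nu$, leaving $p\wedge q\wedge\mu^{pq}$ and $e_1\wedge\nu^{e}$ untouched. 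The conclusion is nevertheless correct: if $\g_L$ contains an element projecting to $p\wedge q$, then weak irreducibility of $\g_L\subset\so(1,2)$ forces $\g_L=\Real p\wedge q\oplus\Real p\wedge e_1$, so $\g$ also contains an element $\xi_2$ projecting to $p\wedge e_1$; since $(p\wedge e_1)\cdot(p\wedge q)=p\wedge e_1$ and $(p\wedge e_1)e_1=-p$, the equation $\xi_2\cdot T=0$ maps the surviving weight-$0$ slots into the already-vanishing weight-$(-1)$ slots and kills them too, giving $\mu=\nu=0$ and the desired contradiction with $T\notin\wedge^3L\oplus\wedge^3E$. With that repair the proof goes through as you describe.
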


	The proofs of Theorems \ref{ThL=2} and  \ref{ThL=3} are direct and they use the techniques from the previous sections. For the proof of Theorem \ref{ThL=3} note that the projection of the torsion to $\wedge^3 L$ is proportional to the volume form.

\section{Summary}\label{SecSum}

Now we summarize the information about holonomy algebras form the previous sections. Let $(M,g,\nabla)$ be a Lorentzian geometry with non-zero parallel skew-symmetric torsion $T$ and holonomy algebra $\g\subset\so(1,n+1)$, then one of the following possibilities holds:

\begin{itemize}
	\item[1.] the holonomy algebra $\g\subset\so(1,n+1)$ is irreducible. This is possible only for $n=1$ and $\g=\so(1,2)$; in this case the torsion is proportional to the volume form.
	\item[2.] the holonomy algebra $\g\subset\so(1,n+1)$ is weakly irreducible and not irreducible. This situation is described in Section~\ref{Secpomega}. 
\end{itemize}

In other cases $\g$ preserves a decomposition of the tangent space
$$\Real^{1,n+1}=L\oplus E=\Real^{1,k+1}\oplus \Real^{n-k},\quad 1\leq\dim L= k+2\leq n-1$$
such that the induced representation of $\g$ in $L$ is weakly irreducible. In this case the geometry is reducible.  One of the possibilities is

\begin{itemize}
	\item[3.] The torsion at a point satisfies $T\in\wedge^3L \oplus \wedge^3 E$. Then the geometry is decomposable, i.e., locally it is a product of  Lorentzian and Riemannian geometries with parallel skew-symmetric torsion. In particular, $\g=\g_L\oplus\fb$, where $\g_L\subset\so(L)$, $\fb\subset\so(E)$ are the corresponding holonomy algebras.
\end{itemize}
	Now we may assume that $T\not\in\wedge^3L \oplus \wedge^3 E$, i.e., the geometry is not  decomposable. Then the following cases depending on the dimension of $L$ may appear:

\begin{itemize}
	\item[4.] $\dim L=1$; in this case $\g\subset\so(E)=\so(n+1)$, and the torsion is of the form $e_-\wedge\theta+\omega_E$, where $e_-$ is a  vector from $L$ of norm $-1$,  $\theta\in\wedge^2 E$, $\omega_E\in\wedge^3 E$, it holds $\g\cdot\theta=0$, $\g\cdot\omega_E=0$, $\theta\cdot\omega_E=0$; each $R\in\R^T(\g)$ is of the form $R=C_0-\theta\circ\theta$ for some $C_0\in\R^{\omega_E}(\g+\Real\theta)$. 
		\item[5.] $\dim L=2$; this case is considered in Section \ref{SecL=2,3};
	\item[6.] $\dim L=3$; this case is considered in Section \ref{SecL=2,3};
	\item[7.] $\dim L\geq 4$; this case is considered in Section \ref{SecdimL>3}.
\end{itemize}

\begin{cor}
	Let $(M,g,\nabla)$ be an indecomposable Lorentzian geometry with a parallel skew-symmetric torsion $T$ and holonomy algebra $\g\subset\so(1,n+1)$. Suppose that $M$ is simply connected.
	Then there exists a $\nabla$-parallel isotropic vector field in  the cases 2 with ($n\geq 2$), 6 and 7 from above. In the case 4 the vector field $p$ exists if and only if $\g$ annihilates a non-zero vector in $E$. The $\nabla$-parallel isotropic vector field $p$ is also parallel with respect to the Levi-Civita connection~$\nabla^g$. 
	\end{cor}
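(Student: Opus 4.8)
The plan is to derive everything from the holonomy principle. Since $M$ is simply connected, $\nabla$-parallel tensor fields on $M$ correspond bijectively to $\g$-invariant tensors at the reference point $x$; in particular there is a $\nabla$-parallel (isotropic) vector field on $M$ exactly when $\g$ annihilates a non-zero (isotropic) vector of $T_xM$. So the existence part of the corollary becomes a purely algebraic statement about $\g$, which has essentially been recorded in the structure theorems of the previous sections, and the Levi-Civita statement will follow from a short computation with $\nabla=\nabla^g+\tfrac12T$.

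First I would treat case~2 with $n\ge2$ by invoking Corollary~\ref{corwirrcase}: there $\g$ is weakly irreducible and not irreducible, the $3$-vector $T_x\ne0$ is $\g$-invariant because $\nabla T=0$, and Lemma~\ref{lemwirr} forces $\g$ to be of type~2 or~4, hence to annihilate the isotropic vector $p$ (with, moreover, $T=p\wedge\omega$ for $\omega$ pulled back from the screen bundle). For cases~6 and~7 I would simply read off the holonomy algebra from Theorems~\ref{ThL=3} and~\ref{ThdimL>3}: in both, the displayed form of $\g$ shows at once that $\g$ kills the first vector $p$ of the chosen Witt basis of $L$ (each of the blocks $\Real(p\wedge e_1)$, $p\wedge\Real^{k_1}$, $\h$, $\fb$ does so), so $p$ is a $\g$-invariant isotropic vector and defines the required $\nabla$-parallel field. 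In the remaining indecomposable cases~1 and~5 the holonomy admits no invariant isotropic vector, but there the corollary claims nothing.

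For case~4 one has $\dim L=1$, so $L=\Real e_-$ with $e_-$ timelike and $\g\subset\so(E)$ acting trivially on $L$. Any isotropic vector of $T_xM=\Real e_-\oplus E$ has the form $a\,e_-+w$ with $w\in E$ and $a^2=g(w,w)$; it is $\g$-invariant iff $\g$ annihilates $w$, and it is non-zero precisely when $w\ne0$, which then forces $a\ne0$. Hence a $\nabla$-parallel isotropic vector field exists if and only if $\g$ annihilates a non-zero vector of $E$, which is the asserted equivalence.

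Finally, for the Levi-Civita statement: if $p$ is the $\nabla$-parallel isotropic vector field obtained above, then $\nabla^g_Xp=\nabla_Xp-\tfrac12T(X,p)=-\tfrac12T(X,p)$, so it remains to check $T(X,p)=0$ for all $X$. In case~2 this is exactly the identity $\nabla^g_Xp=0$ already established in Section~\ref{Secpomega}. In case~7 it follows from the shape $T=p\wedge\zeta+\omega_E$ with $\zeta\in\wedge^2(\Real^k\oplus E)$ and $\omega_E\in\wedge^3E$ given by Theorem~\ref{ThdimL>3}: since $p$ is isotropic and orthogonal to $\Real^k\oplus E$, contracting $T$ with $p$ kills both summands. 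For $\dim L=1$ and $\dim L=3$ the same contraction has to be carried out against the torsion forms of the summary (case~4) and of Theorem~\ref{ThL=3} (case~6); there the torsion a priori contains a term coupling $L$ to $E$, or --- when $\dim L=3$ --- a component proportional to the volume form of $L$, so the vanishing of $T(X,p)$ is not formally evident. I expect \emph{this} to be the main obstacle: one must combine the fact that $\g$ annihilates $p$ with the invariance conditions $\g\cdot T=0$ (and $\theta\cdot\omega_E=0$ in case~4) and with the indecomposability of the geometry to see that the direction along which $p$ meets $E$ lies in the kernel of the endomorphisms occurring in $T$, so that the contraction vanishes after all; doing this invariantly, rather than by a parameter-by-parameter inspection of the lists in Theorems~\ref{ThL=3}, \ref{ThdimL>3} and the summary, is the delicate point.
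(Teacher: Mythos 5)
Your handling of the existence assertions is correct: on a simply connected $M$ the holonomy principle reduces everything to exhibiting a $\g$-invariant isotropic vector, which Corollary \ref{corwirrcase} supplies in case 2, the displayed holonomy algebras of Theorems \ref{ThdimL>3} and \ref{ThL=3} supply in cases 7 and 6 (every block annihilates $p$), and your parametrization $a e_-+w$, $a^2=g(w,w)$, $\g w=0$ settles in case 4. (The paper states this corollary without a separate proof, so there is nothing to compare against; this reading of the case list is the intended argument.) The verification $T(\cdot,p)=0$, hence $\nabla^g p=0$, is also fine in cases 2 and 7, where every summand of $T$ is either in $\wedge^3E$ or of the form $p\wedge\beta$ with $\beta\in\wedge^2(p^\perp)$, so the contraction with the isotropic $p$ visibly vanishes.

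The last sentence of the corollary, however, is exactly where your argument stops, and this is a genuine gap rather than a verification you may defer. In case 6 the torsion of Theorem \ref{ThL=3} contains the summand $\alpha\, p\wedge e_1\wedge q$, whose $\beta=\alpha\,e_1\wedge q$ does \emph{not} lie in $\wedge^2(p^\perp)$; a direct computation with the paper's conventions gives $T(q,p)=\alpha(e_1\wedge q)(p)=-\alpha e_1$, hence $\nabla^g_qp=\tfrac{\alpha}{2}e_1$. Since $p$ is, up to scale, the only $\g$-invariant isotropic vector there, the assertion forces $\alpha=0$, and nothing in your proposal (nor in the bare invariance relations $\g\cdot T=0$) produces this; note that the indecomposable five-dimensional family with $\dim L=3$ and $T=\alpha\,p\wedge e_1\wedge q+a\,p\wedge v_1\wedge v_2$ appearing in Section \ref{SecHom} carries an unconstrained $\alpha$, so the missing step is not cosmetic. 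Similarly, in case 4 the summand $e_-\wedge\theta$ yields, for $X\in E$,
\begin{equation*}
T(X,\,ae_-+w)=a\,\theta(X)+\theta(X,w)\,e_-+\omega_E(X,w),
\end{equation*}
so one must prove $\theta(w)=0$ and $a\,\theta(X)=-\omega_E(X,w)$ for all $X$ — nontrivial identities that would have to be extracted from indecomposability and the relations $\g\cdot\theta=\g\cdot\omega_E=\theta\cdot\omega_E=0$. You correctly diagnose this as ``the main obstacle'' and ``the delicate point,'' but diagnosing it is not proving it: as written, the proposal establishes the full corollary only in cases 2 and 7, and the remaining cases require either a proof that the offending components vanish for indecomposable geometries or a reformulation of the final claim.
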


Thus in the most of the cases the geometry is locally defined by a Walker metric $g$ given by~\eqref{walkerMetric} with the parallel isotropic vector field $p=\partial_v$. The tensors determining the torsion may be considered as $\nabla$-parallel multivectors on the screen bundle $p^\bot/\left<p\right>$.

\section{Naturally reductive homogeneous spaces}\label{SecHom}

In Section \ref{SecPrel} we have seen that a connected and simply connected naturally reductive homogeneous space is uniquely determined by the corresponding infinitesimal model $(\m,R,T)$. Consequently, our task is to describe all infinitesimal models in the Lorentzian signature, i.e., for $\m=\Real^{1,n+1}$, $n\geq 1$.
Above we have found all holonomy algebras $\g\subset\so(1,n+1)$ of Lorentzian geometries $(M,g,\nabla)$ with parallel skew-symmetric torsion, we have described all torsions $T$ of these geometries, and all curvature tensors $R\in\R^T(\g)$. If such $\g$, $T$, and $R$ are fixed, then the second equality from \eqref{propRT1} and the equality 
\eqref{propRT2} hold true for $R$ and $T$. Consequently, 
$(\m,R,T)$ is an infinitesimal model with the holonomy algebra $\g$ if and only if $\g\cdot R=0$, $\g=\im R$, and \eqref{propRT3} holds true. Note that since $R$ satisfies \eqref{sympropR}, by \cite[Theorem A.2]{AFeF15}, the property \eqref{propRT3} holds true automatically. 
\emph{ In what follows we say that a    naturally reductive homogeneous space $(M,g)$ is indecomposable if the corresponding geometry $(M,g,\nabla)$, where $\nabla$ is the canonical connection, is indecomposable in the sense of the definition from Section~\ref{SecRed}.}

Let us give some constructions of infinitesimal models in Lorentzian signature.

\begin{ex}\label{ConstrdimL>3} {\rm
		Let $(E_1,C_0,\omega_{E_1})$ be an infinitesimal model of Riemannian signature. Let $\n\subset\so(E_1)$ be a commutative subalgebra commuting with $\fb_0=\im C_0$ and annihilating $C_0$ and $\omega_{E_1}$. Choose an Euclidean metric on $\n$.  Let $\dim \n=l$ and let $\Real^l$ and $E_0$ be two copies of $\n$. 
	Consider the Minkowski space
	$$\Real^{1,n+1}=L\oplus E,$$
	where $$L=\Real p\oplus\Real^k\oplus\Real q,\quad \Real^k=\Real^{k-l}\oplus\Real^l,\quad k\geq 2,$$
	$$E=E_1\oplus E_0.$$
	
	Let $\omega_{\Real^k}\in\wedge^2\Real^k$ be an arbitrary element.
	Let $\varphi\in\n\wedge E_0$ and $\zeta_2\in\Real^l\wedge E_0$ correspond to the identity maps under the identifications $E_0\cong\n$ and $\Real^l\cong E_0$, respectively.
	Define the 3-form $$\omega_E=\omega_{E_1}+\varphi\in\wedge^3 E$$
Let $\lambda\in\wedge^2E$ be an element such that	$\lambda	\cdot \omega_{E}=0$. 		Let $\zeta_1\in\Real^k\wedge E_1$ be an element satisfying the conditions $\fb\cdot \zeta_1=0$, $\omega_{E_1}(\zeta_1(\Real^k))=0$, and $\zeta_1(E_1)\cap\zeta_2(E_0)=0$. 	
	Let 	
	$$T=p\wedge \zeta+\omega_E,$$ where 
		$$\zeta=\omega_{\Real_k}+\zeta_1+\zeta_2+\lambda.$$
	 Let $V_1,\dots,V_l$ be an orthogonal basis of $E_0$. Let $\theta_i=\varphi(V_i)$, and $X_i=\zeta_2(V_i)$.
	Let $K:\Real^n\to\Real^n$ be a symmetric linear map such that $\im K$ and $\zeta_2(E_0)$ span $\Real^k$.
	Define the tensor $R$  with the following non-zero values:
	\begin{align*} R(q,X)&=p\wedge K(X)+\sum_{i = 1}^l g(X,X_i)\theta_i,\quad X\in\Real^k,\\
	R(Y,Z)&= p\wedge \sum_{i = 1}^l\theta_i(Y,Z)X_i+C_0(Y,Z)+\sum_{i = 1}^l\theta_i(Y,Z)\theta_i,\quad Y,Z\in E_1. 
	\end{align*} 
	It is easy to check that $(\Real^{1,n+1},R,T)$ is an infinitesimal model. The projection of the holonomy algebra $\g=\im R$ to $\so(L)$ coincides with $p\wedge\Real^k$.
}
		\end{ex}

\begin{ex}\label{ConstrdimL=3}{\rm Let
$(E_1,C_0,\omega_{E_1})$ be an infinitesimal model of Riemannian signature. Let $\Real$ be the line with the scalar product, and $v\in\Real$ an arbitrary (possibly zero) vector. Consider the Euclidean space $E=E_1\oplus\Real v$. Let $n=\dim E+1$.
Consider the Minkowski space
$$\Real^{1,n+1}=L\oplus E=\Real^{1,2}\oplus E.$$
Fix  endomorphisms $\theta\in\wedge^2 E_1$, $\lambda\in\wedge^2 E$ commuting with $\im C_0$. Let $\alpha,\beta\in\Real$.
Define the algebraic torsion		$$T = p \wedge(\alpha  e_1 \wedge q +  e_1 \wedge v +  \lambda)+  \omega_{E_1}+\theta \wedge v$$
and curvature
\begin{align*}
	R(q,e_1) =& \beta p \wedge e_1 + g(v,v)\theta + \alpha \lambda, \\
		R(X, Y) =& g(g(v,v)\theta X + \alpha\lambda X, Y)p \wedge e_1 \\&+ C_0(X,Y)+g(v,v)\theta(X,Y)\theta,\quad X,Y\in E.  
		\end{align*}	
	It is easy to check that $(\Real^{1,n+1},R,T)$ is an infinitesimal model. }
\end{ex}

\begin{ex}\label{ConstrdimL=2}{\rm Let
		$(E_1,C_0,\omega_{E_1})$ be an infinitesimal model of Riemannian signature, $\dim E_1=n-1$. Let $\Real$ be the line with the scalar product, and $v\in\Real$ an arbitrary non-zero vector. 
		Consider the Minkowski space
		$$\Real^{1,n+1}=L\oplus E=\Real^{1,1}\oplus (E_1\oplus\Real v).$$
		Fix an endomorphism $\theta\in\wedge^2 E_1$ commuting with $\im C_0$. Let $\alpha\in\Real$.
		Define the algebraic torsion		$$T = p \wedge q \wedge v + \theta \wedge v + \omega_{E_1}$$
		and  curvature
		\begin{align*}
		& R(p,q) = \alpha p \wedge q - g(v,v) \theta, \\
		& R(X,Y) = g(v,v) \theta(X,Y) p \wedge q + C_0(X,Y)+\theta(X,Y)\theta, \quad X,Y\in E.\end{align*}
			We get an infinitesimal model  $(\Real^{1,n+1},R,T)$. 
		}
		\end{ex}

\begin{ex}\label{ConstrdimL=1}{\rm Let $(E=\Real^{n+1},C_0,\omega_E)$ be an infinitesimal model of Riemannian signature. Suppose that a $\theta\in\wedge^2 E$ is given, and it holds $\theta\cdot C_0=0$, $\theta\cdot \omega_E=0$. Consider the Minkowski space
$$\Real^{1,n+1}=L\oplus E=\Real e_-\oplus \Real^{n+1},$$
where $e_-$ is a vector of norm $-1$. Define the tensors
$$T=e_-\wedge\theta+\omega_E,$$
and $$R=C_0-\theta\circ\theta.$$
		We get an infinitesimal model  $(\Real^{1,n+1},R,T)$. 
}
\end{ex}

\begin{theorem}\label{Thnrhs} Let $(M,g)$ be a connected and simply connected indecomposable  naturally reductive homogeneous Lorentzian space of dimension $n+2\geq 4$. Then either $(M,g)$ is a symmetric space or one of the following holds:
	\begin{itemize}
			\item $(M,g)$ is $\Real^{n+2}$ with the structure of a homogeneous plane wave described above in Example~4.
				
				\item The infinitesimal model $(\Real^{1,n+1},R,T)$ of  $(M,g)$ is given by one of  Examples \ref{ConstrdimL>3}--\ref{ConstrdimL=1}.
					\end{itemize}
	\end{theorem}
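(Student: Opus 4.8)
The plan is to combine the classification of holonomy algebras with the observation that an infinitesimal model of a naturally reductive homogeneous space is precisely a triple $(\m,R,T)$ with the properties recorded in Section~\ref{SecPrel}: $\g=\im R$, $\g\cdot R=0$, $\g\cdot T=0$, and the two Bianchi identities \eqref{propRT2}, \eqref{propRT3}. As remarked just before Example~\ref{ConstrdimL>3}, once $R$ satisfies \eqref{sympropR} the identity \eqref{propRT3} is automatic by \cite[Theorem A.2]{AFeF15}, so the only conditions we must check, for a candidate $(R,T)$ coming from the structure theorems, are $\g\cdot R=0$ and $\g=\im R$. Thus the proof is essentially a bookkeeping argument that walks through the list in Section~\ref{SecSum} and matches each case with the appropriate Example.

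First I would dispose of the weakly irreducible but not irreducible case (case~2 of the Summary). Here $T=p\wedge\omega$ with $\sigma_T=0$, so by the discussion in Section~\ref{Secpomega} the connection $\nabla$ has $dT=0$ and $\nabla^g T=0$, and $(M,g)$ is a Walker manifold; the underlying metric is a pp-wave, and naturally reductive such spaces with an AS-connection carrying parallel curvature are exactly the regular homogeneous plane waves of Example~4 (using \cite{BOL,GL}), together with the symmetric (Cahen--Wallach) ones. The irreducible case~1 is trivial ($n=1$, torsion a multiple of the volume form, which is the symmetric space $\SO(1,2)$ or a space form). Case~3 (decomposable) is excluded by the indecomposability hypothesis.

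Next, for the genuinely reducible indecomposable cases $\dim L=1,2,3,\geq 4$ (cases~4--7), I would invoke Theorems \ref{Thvert}, \ref{ThdimL>3}, \ref{ThL=2}, \ref{ThL=3} (and the $\dim L=1$ description in the Summary) to get the precise form of $\g$, of $T$, and of the general curvature tensor $R\in\R^T(\g)$. The point is that each Example~\ref{ConstrdimL>3}--\ref{ConstrdimL=1} is built by choosing inside the general $R$ from the corresponding structure theorem the additional constraints that force $\g\cdot R=0$: namely one takes the Riemannian-signature ingredients $(E_1,C_0,\omega_{E_1})$ to themselves form an infinitesimal model (so the Riemannian part of the curvature is parallel under $\fb_0$), one drops the freely-varying pieces $P$, $R_0$ that need not be $\h$-parallel by requiring $P=0$, $R_0\in\R(\h)$ to be taken from an infinitesimal model, and one retains only the "rigid" blocks $p\wedge K(X)$, $\sum g(X,X_i)\theta_i$, $\sum\theta_i(Y,Z)\theta_i$ whose $\g$-invariance follows from the already-established relations $\fb\cdot\theta_i=0$, $[\theta_i,\theta_j]=0$, $\fb\cdot\omega_{E_1}=0$, $\fb\cdot\lambda=0$ and the fact that $\g$ acts trivially on $p$, on $E_0$, and on the $X_i$. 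A short computation (which I would not carry out in detail here) then verifies $\g\cdot R=0$, and $\g=\im R$ is arranged exactly as in the last paragraph of the proof of Theorem~\ref{Thvert} and of Theorem~\ref{ThdimL>3}: one exhibits, for each generator of $\g$, a curvature value realizing it. Conversely, given an indecomposable naturally reductive $(M,g)$ in one of these cases, the structure theorem forces $T$ and $R$ into the stated shape, and $\g\cdot R=0$ together with $\g=\im R$ forces the Riemannian ingredients to be an infinitesimal model and $P$, $R_0$ to satisfy the invariance constraints, which is precisely the content of the corresponding Example.

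The main obstacle I anticipate is the verification that, after imposing the constraints, the candidate $R$ is \emph{genuinely} parallel, i.e. $\g\cdot R=0$, and not merely an element of $\R^T(\g)$ — this is where the interplay between the $\so(1,k+1)$-part and the $\so(E)$-part of $\g$ matters, since the nilpotent generators $p\wedge X\in\g$ act on the blocks $p\wedge K(X)$ and on $C(Y,Z)$ and one must check these actions vanish using $p\wedge X\cdot p=0$ and the vanishing of the relevant contractions. A secondary subtlety is ensuring the \emph{indecomposability} of the constructed examples — one must check that the decomposition $L\oplus E$ cannot be refined so that $T$ splits accordingly — which is handled by the non-vanishing and non-degeneracy conditions imposed on $\lambda$, $\varphi$, $\zeta$, $\theta$ in the Examples (cf. the remark after Theorem~\ref{ThdimL>3} and the discussion of horizontal versus vertical distributions). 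Once these points are settled, assembling the equivalence "indecomposable naturally reductive Lorentzian space $\Leftrightarrow$ symmetric, or a homogeneous plane wave, or one of Examples \ref{ConstrdimL>3}--\ref{ConstrdimL=1}" is a matter of organizing the case distinction of Section~\ref{SecSum}.
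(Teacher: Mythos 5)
Your overall strategy coincides with the paper's: reduce to the case list of Section~\ref{SecSum}, use that \eqref{propRT3} is automatic once \eqref{sympropR} holds, and in each reducible case impose $\g\cdot R=0$ on the general $R\in\R^T(\g)$ from the structure theorems to force $P=0$, $R_0=0$ and land in Examples~\ref{ConstrdimL>3}--\ref{ConstrdimL=1}. That part of your argument is sound and is exactly what the paper does.

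The gap is in the weakly irreducible case. You pass from ``$(M,g)$ is a Walker manifold'' to ``the underlying metric is a pp-wave'' without justification: the Walker form alone does not give a pp-wave. The step you are missing is that $\g\cdot R=0$ forces the components $R_1$ and $P$ of the algebraic curvature tensor to vanish, whence by \eqref{curvatureTensorOmega} the curvature $R^g$ takes values in $p\wedge\mathR^n$, which is what characterizes a pp-wave. More substantially, you then outsource the conclusion to \cite{BOL,GL}, but those references classify (regular) homogeneous \emph{plane} waves; to invoke them you must first know that $H$ is quadratic in the $x^i$ and that the torsion coefficients $\omega_{ij}$ are constant, and you must still identify the AS-connection with the one of Example~4 and rule out the singular homogeneous plane waves. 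The paper instead derives all of this directly: $\nabla T=0$ gives $\omega_{ij}$ constant, $\nabla R=0$ becomes $\nabla^E K=0$, which yields $\partial_{x^l}\partial_{x^i}\partial_{x^j}H=0$ (so $H$ is quadratic) and the matrix ODE $\partial_u K_0=[F,K_0]$ with solution $K_0=(e^{-uF})^\intercal B e^{-uF}$, recovering precisely the regular homogeneous plane waves (and showing the space is symmetric iff $[F,K_0]=0$). This computation, not a citation, is the content of that case, and your proposal should either carry it out or at least state why the cited classification applies after the pp-wave form has been established.
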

\begin{proof}  Let $(M,g)$ be a connected and simply connected naturally reductive homogeneous Lorentzian space. Let $\nabla$ be the canonical connection, and let $R$ be the curvature tensor of $\nabla$.
	If $R=0$, then by \eqref{dT}, $dT=0$, and as we have seen  in Example~3 above, $(M,g)$ is a symmetric space. Hence we may assume that the holonomy algebra $\g$ of the canonical connection $\nabla$ is non-trivial. 	
	
	We consider case by case the holonomy algebras from the list given in Section~\ref{SecSum}. Consider the holonomy algebra from the case	{\bf  2.} In this case $\g\subset\so(1,n+1)$ is weakly irreducible and not irreducible.  	
	Since $\dim M\geq 4$,
	 we are in the settings  of Section~\ref{Secpomega}. In Section~\ref{Secpomega} we have seen that each algebraic curvature tensor $R$ is determined by the components $R_1$, $P$, and $K$.
	The condition $\g\cdot R=0$ immediately implies $R_1=0$  and $P=0$.
This and equality \eqref{curvatureTensorOmega} imply that $g$ is a pp-wave metric, i.e., $$ g = 2 dv du + \sum_{i=1}^n (dx^i)^2 + H (du)^2, $$
where $H=H(x^1,\dots x^n,u)$. The torsion may be written
as $$T=du\wedge\omega,\quad\omega=2\sum_{1\leq i<j\leq n}\omega_{ij}dx^i\wedge dx^j,$$
where $\omega_{ij}$ are functions. The condition $\nabla T=0$ easily implies that $\omega_{ij}$ are constants (in Section \ref{Secpomega} we have seen that $\omega$ is a parallel section of the screen bundle, which is now flat). The equality \eqref{curvatureTensorOmega} shows that the only non-zero values of the curvature tensor are
$$R(\partial_u,X)=p\wedge K(X)=R^g(\partial_u,X)-\frac{1}{4}p\wedge \omega^2(X),$$ where $X$ is a combination of the vector fields $\partial_{x^i}$, $i=1,\dots,n$. Next, $R^g$ is determined by
$$R^g(\partial_u,X)=p\wedge K_0(X),$$
$$K_0=\frac{1}{2}\sum_{ i,j=1 }^n(\partial_{x^i}\partial_{x^j}H) \partial_{x^i}\otimes dx^j,$$
see, e.g., \cite{GalRMS}. The condition $\nabla R=0$ may be rewritten as the condition $\nabla^EK=0$.
We get that $$\nabla^E_{\partial_{x^i}}K=\nabla^E_{\partial_{x^i}}K_0=\nabla^{g,E}_{\partial_{x^i}}K_0.$$
This implies that $\partial_{x^l}\partial_{x^i}\partial_{x^j}H=0$, i.e.,
$H=\sum_{i,j=1}^nH_{ij}(u)x^ix^j$ (the terms linear in $x^i$ may be omitted without loss of generality). Next, 
$$\nabla^E_{\partial_{u}}K=\nabla^E_{\partial_{u}}K_0=\left(\nabla^{g,E}_{\partial_{u}}+\frac{1}{2}\omega \right)K_0=\nabla^{g,E}_{\partial_{u}}K_0-[F,K_0],$$
where $F$ is a linear map with the matrix $F^i_j=\omega_{ij}$.
This shows that the matrix $K_0$ satisfies
$$\partial_u K_0=[F,K_0].$$
The solution of this system of equations
may be obtained in the form
$$K_0=(e^{-uF})^\intercal B e^{-uF}$$
for a constant matrix $B$. Thus we arrive to the settings of Example~4. The space $(M,g)$ is symmetric if and only if 
$\nabla^{g,E}K_0=0$, which is equivalent to the condition $[F,K_0]=0$.

Since we consider indecomposable geometry $(M,g,\nabla)$, its holonomy algebra and torsion cannot be as in the case {\bf 3}.
If the holonomy algebra is as in the case {\bf 4}, we immediately see the infinitesimal model of $(M,g)$ is as in Example \ref{ConstrdimL=1}. In the cases {\bf 5} and {\bf 6} it is easy to check that the infinitesimal model is as in Examples \ref{ConstrdimL=2} and \ref{ConstrdimL=3}, respectively.
 	Suppose that	the holonomy algebra is as in the case {\bf 7}. Consider $T$ and $R$ as in Theorem \ref{SecdimL>3}. The condition $p\wedge\Real^{k_1}\cdot R=0$ easily implies the equalities $R_0=0$ and $P=0$. This shows that the infinitesimal model is as in Example \ref{ConstrdimL>3}.	
\end{proof}

Now we consider spaces of dimension 3, 4, and 5. The simply connected naturally reductive homogeneous spaces $(M,g)$ are assumed to be indecomposable.

{\bf Dimension of $M$ is $3$.} Connected and simply connected naturally reductive homogeneous Lorentzian space of dimension~3 were classified in \cite{CM08,HBRR}, where it is shown that these spaces   are exhausted by symmetric spaces, Lie groups $\SU(2,\Real)$, $\widetilde{\SL(2,\Real)}$
and Heisenberg group with  suitable left-invariant metrics. We prove this result by our method. 

Since $\dim M=3$, the torsion $T$ is proportional to the volume form. Making a rescaling, we may assume that the torsion coincides with the volume form. We will consider a Witt basis $p,e,q$ of $\Real^{1,2}$ and assume that $T=p\wedge e\wedge q$. It is easy to see that the right-hand side of the Bianchi identity is zero. Hence each algebraic curvature tensor with torsion $T$ is  just an algebraic curvature tensor with zero torsion. 
 
	Suppose that $\g=\so(1,2)$. 
	 Each invariant algebraic curvature is of the form $R(X,Y)=c X\wedge Y$, where $c$ is a non-zero constant. Since $c\neq 0$, the reductive decomposition
$\so(1,2)\oplus\Real^{1,2}$ of the isometry algebra may be chosen in such a way that $[\Real^{1,2},\Real^{1,2}]\subset \so(1,2)$, i.e., $(M,g)$ is a symmetric space.

Suppose that the holonomy algebra $\g$ is weakly irreducible and not irreducible. Then $\g$ is either $\Real p\wedge q\zr\Real p\wedge e$ or $\Real p\wedge e$. The first algebra does not admit a suitable algebraic curvature tensor. Thus, $\g=p\wedge e$. The curvature tensor is defined by the equality 
$$R(q,e)=\alpha p\wedge e$$ for some non-zero $\alpha$. 
The algebra $\frakf$ has dimension 4 and it holds 
\begin{align*}
&[p, q] = -e, \quad [p,e] = p, \quad
[e, q] = q + \alpha p\wedge e, \\
&[p \wedge e, q] = e, \quad [p \wedge e,e] = -p.  
\end{align*}
The derived algebra is given by
$$\frakf^\prime = \left< p, e, q + \alpha p\wedge e \right>$$
and it is \emph{transversal} to $\g$. 
Let  $A = p$, $B = e$, $C = q + \alpha p\wedge e $. Then it holds that
$$[A, B] = A, \quad [A, C] = -B, \quad [B, C] = \alpha A + C.$$
The Killing form of $\f'$ with respect to basis $(A,B,C)$ is given by the matrix
$$ K = \left(\begin{array}{ccc}
0 & 0 & 2 \\
0 & 2 & 0 \\
2 & 0 & -2\alpha
\end{array}\right). $$
Thus $\frakf^\prime$ is isomorphic to $\so(1,2)$, and $(M,g)$ is isometric to the Lie group $\widetilde{\SL(2,\Real)}$
 with a suitable left-invariant metric.

Suppose that $\g$ preserves the orthogonal decomposition
	$\mathR^{1,2} = \mathR^{1, 0} \oplus \mathR^{2}.$
	Fix an orthonormal basis $(e_1, e_2)$ in $\mathR^{2}$ and let $e_-$ be a unit time-like vector in $\mathR^{1, 0}$.
	The holonomy algebra is one-dimensional, 
	$$ \g =\so(2)= \mathR e_1 \wedge e_2. $$
	The torsion and the curvature are given by 
	$$ T =  e_- \wedge e_1 \wedge e_2, \quad  R(e_1, e_2) = \beta e_1 \wedge e_2, $$
	where $\beta \in \mathR$ is non-zero.
	The algebra $\frakf$ has dimension 4 and it holds 
	\begin{align*}
	&[e_-, e_1] =  e_2, \quad [e_-, e_2] = - e_1,  \quad
	[e_1, e_2] = - e_- - \beta e_1 \wedge e_2, \\
	&[e_1 \wedge e_2, e_1] = e_2, \quad [e_1 \wedge e_2, e_2] = -e_1.
	\end{align*}
	The derived algebra is given by
	$$\frakf^\prime = \left< e_1, e_2, \alpha e_- + \beta e_1 \wedge e_2 \right>$$
	and it is \emph{transversal} to $\g$.
	Let  $A = e_1$, $B = e_2$, $C =  e_- + \beta e_1 \wedge e_2$. Then it holds that
	$$[A, B] = C, \quad [A, C] = -\gamma B, \quad [B, C] = \gamma A, 
$$
	where $\gamma = 1 + \beta$.
	The Killing form with respect to basis $(A,B,C)$ is given by the matrix
	$$ K = \left(\begin{array}{ccc}
	-2\gamma & 0 & 0 \\
	0 & -2\gamma & 0 \\
	0 & 0 & -2\gamma^2
	\end{array}\right), $$
	thus $\frakf^\prime$ is isomorphic to the Heisenberg algebra $\h_3$ if $\beta=-1$, $\frakf^\prime$ is isomorphic to $\so(3)$ if $\beta<-1$ and it is isomorphic to $\so(1,2)$ otherwise.

	Suppose that $\g$ preserves the decomposition 	$\mathR^{1, 1} = \mathR^{1, 1} \oplus \mathR^{1}$.
	Let $p,q$ be a Witt basis of $\mathR^{1, 1}$. The holonomy algebra is one-dimensional,
	$$ \g = \mathR p \wedge q. $$ 
	The torsion and the curvature are given by 
	$$ T = p \wedge q \wedge e,\quad R(p, q) = \beta p \wedge q, $$
	where  $e \in \mathR^{1}$ is a unite vector and  $\beta \in \mathR$ is non-zero.
	The Lie algebra $\frakf$ is of dimension 4 and it holds 
	\begin{align*}
	&[p, q] = e - \beta p \wedge q, \quad
	[e, p] =  p, \quad [e,q] = - q, \\
	&[p \wedge q, p] = -p, \quad [p \wedge q,q] = q. 
	\end{align*}
	The derived algebra is given by
	$$\frakf^\prime = \left< p, q, e - \beta p \wedge q \right>$$
	and it is \emph{transversal} to $\g$. Let $A = p$, $B = q$, $C = e - \beta p \wedge q$. Then it holds that
	$$[A, B] = C, \quad
	[A, C] = -\gamma A, \quad
	[B, C] = \gamma B, 
	$$
	where $\gamma = 1 + \beta$.
	The Killing form of $\f'$ with respect to the basis $(A,B,C)$ is given by the matrix
	$$ K = \left(\begin{array}{ccc}
	0 & 2\gamma & 0 \\
	2\gamma & 0 & 0 \\
	0 & 0 & 2\gamma^2
	\end{array}\right). $$
	Thus $\frakf^\prime$ is isomorphic to the Heisenberg algebra $\h_3$ if $\beta = -1$ and to $\so(1,2)$ otherwise.
	
	\medskip
	
{\bf Dimension of $M$ is $4$.} Connected simply connected homogeneous Lorentzian spaces of dimension 4 were classified in~\cite{BLM15}. We apply our method and prove the following theorem.

\begin{theorem}\label{Thhomdim4} Let $(M,g)$ be a connected and simply connected indecomposable  naturally reductive homogeneous Lorentzian space of dimension $4$. Then   either $(M,g)$ is a symmetric space or one of the following holds:
	\begin{itemize}
		\item $(M,g)$ is isometric to the homogeneous space $F/G$ with an invariant metric, where  $F=\widetilde{\SL(2,\Real)}\times\Real^2$, and $G\subset F$ is a one-dimensional subgroup;
		\item $(M,g)$ is isometric to the homogeneous space $F/G$ with an invariant metric, where  $F=\SU(2,\Real)\times\Real^2$, and $G\subset F$ is a one-dimensional subgroup;
				\item $(M,g)$  is $\Real^{4}$ with the structure of a homogeneous plane wave described above in Example~4.
	\end{itemize}
	\end{theorem}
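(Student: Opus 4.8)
The plan is to specialize Theorem~\ref{Thnrhs} to $\dim M=4$, i.e.\ to $n=2$, and then to identify the isomorphism type of the Lie algebra $\f=\g\oplus\m$ attached to each surviving infinitesimal model, exactly as in the dimension~$3$ computations carried out above.

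First I would note that for $n=2$ the model of Example~\ref{ConstrdimL>3} cannot occur, since there $L=\Real p\oplus\Real^k\oplus\Real q$ with $k\geq 2$, hence $\dim L\geq 4$ and $\dim M\geq 5$. By Theorem~\ref{Thnrhs} it therefore remains to treat the symmetric case (first alternative of the statement), the homogeneous plane wave case (third alternative), and the models of Examples~\ref{ConstrdimL=3}, \ref{ConstrdimL=2} and \ref{ConstrdimL=1}. In those three Examples the low dimension collapses the data drastically: in Example~\ref{ConstrdimL=3} one has $\dim E=1$, so $E_1=0$ and $\omega_{E_1}=\theta=\lambda=C_0=0$; in Example~\ref{ConstrdimL=2} one has $\dim E=2$ and $\dim E_1=1$, so again $\fb_0=0$ and $\omega_{E_1}=\theta=C_0=0$; in Example~\ref{ConstrdimL=1} one has $\dim E=3$ and $\g\subseteq\so(3)$, and if $\g=\so(3)$ then $\g\cdot\theta=0$ forces $\theta=0$, so $R=C_0$ is the curvature of a Riemannian product $N^{3}\times\Real$ and the geometry is decomposable, contrary to assumption; thus $\g$ is $\so(2)$ or $0$, and $\g=0$ gives $R=0$, whence $dT=0$ and $(M,g)$ is symmetric (cf.\ Example~3). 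In particular, in every non-symmetric, non-plane-wave case $\g$ is one-dimensional, so $\dim\f=5$.

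Next, in each of the finitely many remaining subcases I would write $R$ and $T$ explicitly from the relevant Example, form $\f=\g\oplus\m$ with the bracket recalled in Section~\ref{SecPrel}, exhibit the derived subalgebra $\f'$, check that it is transversal to $\g$, and compute its Killing form. Since the ``flat'' directions (the vector $e_2$ in Example~\ref{ConstrdimL=2}, and likewise the remaining directions of $\Real^k$ and $E_0$) turn out to be central in $\f$, one obtains $\f\cong\f'\oplus\Real^{2}$ with $\f'$ three-dimensional. The signature of the Killing form then pins down $\f'$: when $\f'\cong\mathfrak{sl}(2,\Real)$ one gets the homogeneous space with $F=\widetilde{\SL(2,\Real)}\times\Real^{2}$; when $\f'\cong\su(2)$ --- which can only occur in Example~\ref{ConstrdimL=1}, the ``$E$-part'' being then a $3$-dimensional Riemannian naturally reductive space of compact type --- one gets the homogeneous space with $F=\SU(2,\Real)\times\Real^{2}$; and when $\f'$ is the Heisenberg algebra $\h_{3}$ or abelian the corresponding $(M,g)$ is a homogeneous plane wave or a (Cahen--Wallach or flat) symmetric space, or else the reductive decomposition splits off a flat factor and the geometry is decomposable, so that no further spaces arise. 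In every case the subgroup $G\subset F$ with Lie algebra $\g$ is one-dimensional and closed, because $\g$ is contained in a transvection algebra and hence the reductive decomposition is regular (Section~\ref{SecPrel}); as $M$ is simply connected, $(M,g)$ is globally isometric to $F/G$ with the induced metric.

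I expect the main obstacle to be this last step: running the Killing-form computation uniformly through the degenerate subcases of Examples~\ref{ConstrdimL=1}--\ref{ConstrdimL=3} and verifying that each solvable (non-semisimple) outcome is genuinely a symmetric space, a homogeneous plane wave, or a decomposable geometry, so that the list in the statement is both exhaustive and non-redundant.
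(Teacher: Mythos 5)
Your proposal is correct and follows essentially the same route as the paper: reduce via Theorem \ref{Thnrhs} to the plane-wave case and the degenerate instances of Examples \ref{ConstrdimL=3}--\ref{ConstrdimL=1}, then form the transvection algebra $\f$ and identify $\f'$ through its Killing form. One refinement worth making explicit: the paper disposes of the $\dim L=2,3$ cases and normalizes the torsion in the $\dim L=1$ case (forcing $\beta=\pm1$ in $T=e_-\wedge e_1\wedge e_2+\beta\, e_1\wedge e_2\wedge e_3$) by exhibiting a non-isotropic vector annihilated by both the holonomy algebra and the torsion, which is a sharper decomposability test than your solvability criterion on $\f'$ (decomposable products such as $\widetilde{\SL(2,\Real)}\times\Real$ with a left-invariant naturally reductive factor have semisimple $\f'$), although the cases that would slip through your filter still happen to land in the stated list, so the theorem as formulated survives.
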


\begin{rem} {\rm In  the classification Theorem \cite[Theorem 9]{BLM15}  is given the transvection algebra 
	with the basis $\{Y_1, Y_2, Y_3, T_1, T_2\}$ and the only non-zero brackets
	$$[Y_1, Y_2] = -\lambda Y_3,\quad  [Y_1,Y_3] = \lambda Y_2,\quad [Y_2,Y_3] = Y_1.$$ Unfortunately, it was wrongly stated that the Lie algebra $\left<Y_1, Y_2, Y_3 \right>$ is isomorphic to $\mathfrak{sl}(2,\Real)$. 	Analyzing the Killing form of the Lie algebra $\left<Y_1, Y_2, Y_3 \right>$, it is easy to see that  this Lie algebra is isomorphic to $\mathfrak{sl}(2,\Real)$ if $\lambda>0$  and it is isomorphic to $\su(2)$ if $\lambda<0$. 
	The 	third space from Theorem~\ref{Thhomdim4}  coincides with the second space from \cite[Theorem 9]{BLM15}, we explain this in the proof below. Thus the classifications in Theorem~\ref{Thhomdim4} and \cite[Theorem 9]{BLM15}
are essentially the same.
} 	
\end{rem}

\begin{proof} 	Consider the canonical connection $\nabla$. Let $\g\subset\so(1,3)$ be the holonomy algebra of~$\nabla$.
	First suppose that  $\g\subset\so(1,3)$ is weakly irreducible. Then by Theorem \ref{Thnrhs}, $\g = p \wedge \mathR^2 $.
	The torsion and the curvature are given by
	$$ T = p \wedge e_1 \wedge e_2,\quad  R(q,X) = p \wedge K(X), $$
	where $K$ is a symmetric  endomorphism of $\mathR^2$. We may assume that $K = \operatorname{diag}(\lambda_1, \lambda_2)$. The indecomposability implies that  $\lambda_1$ and  $\lambda_2$ are non-zero.
	The algebra $\frakf$ has dimension 6 and 
	\begin{align*}
	&[p, \frakf] = 0, \quad [e_1, e_2] = -p, \\
	&[e_1, q] =  e_2 + \lambda_1 p \wedge e_1, \quad [e_2,q] = -e_1 + \lambda_2 p \wedge e_2, \\
	&[p \wedge e_i, q] = e_i, \quad [p \wedge e_i,e_j] = -\delta_{ij}p.  
	\end{align*} The derived algebras are given by
	$$\frakf^\prime = \left< p, e_1, e_2, p\wedge e_1, p\wedge e_2 \right>,\quad \f''=\left<p\right>,$$ i.e., $\f$ is solvable.
		The space  corresponds to the case 2 of  \cite[Theorem 9]{BLM15}. We have just reduced the number of the possible parameters from 4 to 2 by a proper choice of the basis:
		we assumed that  $T = c p \wedge e_1 \wedge e_2$ with $c=1$ and we diagonalized the symmetric endomorphism $K$.

	Suppose that $\g$ preserves the decomposition
	$ \mathR^{1, 3} = \mathR^{1, 0} \oplus \mathR^3$. We are in the settings of Example~\ref{ConstrdimL=3}.
	Since $T \neq 0$,  $\g$ is a proper subalgebra in $\so(3)$. Therefore we can assume that $\g = \mathR e_1 \wedge e_2$ 
	and the torsion is given by 
	$$ T = e_- \wedge e_1 \wedge e_2 + \beta e_1 \wedge e_2 \wedge e_3 = e_1 \wedge e_2 \wedge (e_- + \beta e_3),$$
	where $\beta \in \mathR$ is non-zero.
	If $\beta \ne \pm 1$,   then the vector $\beta e_- +  e_3$ is non-isotropic, it is annihilated by the holonomy algebra and the torsion, i.e., the geometry is decomposable.
	We may assume that $\beta = 1$.
	The curvature tensor is given by 
	$$ R(e_1, e_2) = \gamma e_1 \wedge e_2, $$
	where $\gamma \in \mathR$ is non-zero.
	The algebra $\frakf$ has dimension 5 and 
	\begin{align*}
	&[p, \frakf] = 0, \quad [e_1, e_2] = -p - \gamma e_1 \wedge e_2, \\
	&[e_1, q] = 2 e_2, \quad [e_2,q] = -2 e_1.
	\end{align*}
The	center of $\frakf$ is given by 
	$$ \mathfrak{z}(\frakf) = \left< p, q + 2 e_1 \wedge e_2 \right>, $$
	and we have 
	$$ \frakf^\prime = \left< e_1, e_2, p + \gamma e_1 \wedge e_2 \right>, $$
	which is isomorphic to $\so(1,2)$ if $\gamma > 0$ and it is isomorphic  to $\so(3)$ if $\gamma < 0$. Thus $\f$ is either $\so(1,2)\oplus\Real^2$ or $\so(3)\oplus\Real^2$.
	This  space corresponds to the case 1 of Theorem 9 from \cite{BLM15}.

	If $\g$ preserves the decomposition
	$ \mathR^{1, 3} = \mathR^{1, 1} \oplus \mathR^2, $
	then according to Example~\ref{ConstrdimL=2} it holds
	$$ T = p \wedge q \wedge v, $$
	where $v\in\Real^ 2$. Since $\g$ annihilates $v$, it holds $\g=\Real p\wedge q$. Let $v_1\in\Real^2$ be a non-zero vector orthogonal to $v$. Then $v_1$ is annihilated be the holonomy algebra and by the torsion, i.e., the space is  decomposable.

	Finally, if $\g$ preserves the decomposition
	$ \mathR^{1, 3} = \mathR^{1, 2} \oplus \mathR^1, $
	then according to Example \ref{ConstrdimL=3} it holds that
	$$ T = \alpha p \wedge e_1 \wedge q + p \wedge e_1 \wedge v=p \wedge e_1 \wedge v\wedge (\alpha q+v). $$
The vector  $ \alpha v - g(v,v)p$ is space-like, it is annihilated be the holonomy algebra and by the torsion, i.e., the space is again decomposable.

\end{proof}

\medskip

{\bf Dimension of $M$ is $5$.}  To our knowledge, the following result is new.

\begin{theorem} Let $(M,g)$ be a connected and simply connected indecomposable  naturally reductive homogeneous Lorentzian space of dimension $5$. Then either $(M,g)$ is a symmetric space or one of the following holds:
	\begin{itemize}
		\item $(M,g)$ is $\Real^{5}$ with the structure of a homogeneous plane wave described above in Example~4;
		
		\item $(M,g)$ is isometric to a Berger sphere $\SU(3)/\SU(2)$, or $\SU(1,2)/\SU(2)$;

\item		$(M,g)$ is isometric to the homogeneous space $(F_1\times F_2)/G$  with an invariant metric, where each of $F_1$ and $F_2$ are isomorphic to one of the Lie groups $\widetilde{\SL(2,\Real)}$, $\SU(2)$, and $G$ is either $\SO(2)$ or $\SO(1,1)$;

\item $(M,g)$ is isometric to one of the following homogeneous spaces endowed with an invariant metric: $(\widetilde{\SL(2,\Real)}\times H_3)/G$, where $G=\SO(1,1)\subset \widetilde{\SL(2,\Real)}$;  $(H_3\times\SU(2))/G$, where $G=\SO(2)\subset \SU(2)$;
$(H_3\times \widetilde{\SL(2,\Real)})/G$, where $G=\SO(1,1)\subset \widetilde{\SL(2,\Real)}$;

\item		$(M,g)$ is isometric to the Heisenberg group $H_5$  with a left-invariant metric.
\end{itemize}

\begin{proof} Consider the canonical connection $\nabla$. Let $\g\subset\so(1,4)$ be the holonomy algebra of~$\nabla$.
	If the holonomy algebra $\g\subset\so(1,4)$ is weakly irreducible, then by Theorem \ref{Thnrhs}, $(M,g)$ is a regular homogeneous plane wave. Next we consider verschiedene $\g$-invariant decompositions $\Real^{1,4}=\Real^{1,r}\oplus\Real^{4-r}$, $0\leq r\leq 3$ and assume that the induced representation of $\g$ in $\Real^{1,r}$ is weakly irreducible.
		
	Suppose that the holonomy algebra $\g$ preserves the decomposition $\Real^{1,4}=\Real^{1,0}\oplus\Real^4$, i.e., we are in the situation of Example~\ref{ConstrdimL=1}. We claim that the form $\omega$ is zero. Indeed, suppose that $\omega\neq 0$. Then $\g$ annihilates the vector $*\omega\in\Real^4$. We may assume that $*\omega=c e_4$, $\omega=ce_1\wedge e_2\wedge e_3$. Then, $\g\subset\so(3)$. Since $\g$ commutes with $\theta$, $\g$ is a proper subalgebra of $\so(3)$, i.e.,
	we may assume that $\g=\Real e_1\wedge e_2$. We see that $\theta=c_1 e_1\wedge e_2+c_2 e_3\wedge e_4$. The condition $\theta\cdot \omega=0$ implies $c_2=0$. It is clear that $(M,g)$ is decomposable, i.e., we get a contradiction. Thus, $\omega=0$.
	
	Consider now the following decomposition of the Lie algebra
	$\f$:
	$$\f=(\g\oplus \Real e_-)\oplus \Real^4.$$
	It is clear that this decomposition defines a $\mathbb{Z}_2$-grading of $\f$. It holds that $${\rm ad}_{e_-}|_{\Real^4}=\theta.$$
	 The Lie bracket restricted to $\Real^4$ satisfy
	\begin{equation}\label{odnaskLie}[X,Y]=-C_0(X,Y)+\theta(X,Y)\theta-\theta(X,Y)e_-.\end{equation}
	The algebraic curvature tensor $C_0$ defines the following $\mathbb{Z}_2$-graded Lie algebra:	$$\h\oplus\Real^4,$$ where $\h=C_0(\Real^4,\Real^4)\subset\so(4)$. Since $\h$ commutes with $\theta$, we see that $\h$ is one of the following Lie algebras: $\fu(2)$, $\so(2)\oplus\so(2)$, $\so(2)$, or it is trivial. Let us consider these cases.
	
	Consider the case $\h=\fu(2)$. The tensor $C_0$ is given by
	$$C_0(X,Y)=a\big(X\wedge Y+JX\wedge JY+2(JX,Y)J\big),$$ where $J$ is the complex structure on $\Real^4$, and $a\neq 0$. It is clear that $\theta=b J$ for some $b\in\Real$. Recall that $\g=R(\Real^{1,4},\Real^{1,4})=R(\Real^4,\Real^4)$. It holds that 
	$\g=\su(2)$ if and only if $b^2-3a=0$. Otherwise, $\g=\fu(2)$. 
	If $\g=\su(2)$, then it is obvious that 
	$$\f=(\su(2)\oplus \Real e_-)\oplus \Real^4$$ is isomorphic either to $\su(3)$ or to $\su(1,2)$ depending on the sign of $a$. Consequently $(M,g)$ is isometric to one of the homogeneous spaces $\SU(3)/\SU(2)$, $\SU(1,2)/\SU(2)$. If $\g=\fu(2)$, then $\f$ is isomorphic to one of the Lie algebras $\Real(\theta-e_-)\oplus \su(3)$, 	$\Real(\theta-e_-)\oplus \su(1,2)$, and $(M,g)$ is again isometric to one of the homogeneous spaces $\SU(3)/\SU(2)$, $\SU(1,2)/\SU(2)$.

	Suppose that $\h=\so(2)\oplus\so(2)=\Real e_1\wedge e_2\oplus\Real e_3\wedge e_4$. Then \begin{equation}\label{odnotheta}\theta=c_1 e_1\wedge e_2+c_2 e_3\wedge e_4.\end{equation}
	The curvature tensor $C_0$ is given by
	$$C_0(e_1,e_2)=ae_1\wedge e_2,\quad C_0(e_3,e_4)=be_3\wedge e_4$$
for some $a,b\neq 0$. 
We get that $$R(e_1,e_2)=(a-c_1^2)e_1\wedge e_2-c_1c_2e_3\wedge e_4,\quad R(e_3,e_4)=-c_1c_2e_1\wedge e_2+(b-c_2^2) e_3\wedge e_4.$$
The Lie bracket of the Lie algebra $\f$ satisfy
$$[e_1,e_2]=c_1(\theta-e_-)-ae_1\wedge e_2,\quad [e_3,e_4]=c_2(\theta-e_-)-be_3\wedge e_4.$$
The indecomposability implies $c_1c_2\neq 0$.
We conclude that $\g$ is one-dimensional if and only if $ab-ac_2^2-bc_1^2=0$. In that case $$\g=\Real \xi,\quad \xi=((a-c_1^2)e_1\wedge e_2-c_1c_2e_3\wedge e_4).$$
Consequently we get 
$$\f=[\Real^4,\Real^4]\oplus\Real^4.$$ This Lie algebra isomorphic to $\f_1\oplus \f_2$, where $\f_1$ and $\f_2$ are isomorphic to one of the Lie algebra $\so(3)$ or $\so(1,2)$ (depending on the signs of $a$ and $b$). The subalgebra $\g=\so(2)\subset\f_1\oplus \f_2$ is included diagonally. If $ab-ac_2^2-bc_1^2\neq 0$, then $\g=\so(2)\oplus\so(2)$ and 
$$\f=\big([\Real^4,\Real^4]\oplus\Real (\theta-e_-)\big)\oplus\Real^4.$$ In that case $\f$ contains the center $\Real(\theta-e_-)$. The Lie algebra $[\Real^4,\Real^4]\oplus\Real^4$ is again isomorphic to $\f_1\oplus \f_2$. 
The projection of this Lie algebra to $\Real^{1,4}=\Real e_-\oplus\Real^4$ coincides with
$\Real^{1,4}$, i.e., the corresponding connected Lie subgroup of $F$ acts transitively on $M$.

Suppose that $\h=\so(2)=\Real e_1\wedge e_2$. Then $\theta$ is again given by \eqref{odnotheta}.
Now $C_0$ is given as in the previous case with $b=0$ and $a\neq 0$.
As above, $c_1c_2\neq 0$. We see that $\g$ is 2-dimensional.
In that case $$\f=\big([\Real^4,\Real^4]\oplus\Real e_3\wedge e_4\big)\oplus\Real^4,$$ and   
$[\Real^4,\Real^4]\oplus\Real^4$ is isomorphic to $\f_1\oplus\h_3$, where $\f_1$ is as above, and $\h_3$ is the Heisenberg algebra.

Suppose that $\h=0$, i.e., $C_0=0$. We again may assume that  
	$\theta$ is  given by \eqref{odnotheta}. We get that $\g=\Real\theta$. The indecomposability implies $c_1c_2\neq 0$. The subalgebra $\Real(\theta-e_-)\oplus\Real^4\subset\f$ is transversal to $\g$ and it is isomorphic to the Heisenberg algebra. Consequently $(M,g)$ is isometric to the Heisenberg group with a left-invariant metric.

	Suppose that $\g$ preserves the decomposition $\Real^{1,4}=\Real^{1,1}\oplus\Real^3$. According to Example~\ref{ConstrdimL=2}, $$T=p\wedge q\wedge v+\theta\wedge v,$$
	where $v\in\Real^3$ is a non-zero vector, and $\theta\in\wedge^2\Real^3$ with $\theta(v)=0$. We may assume that the norm of $v$ is 1. Let $e_1,e_2,v$ be an orthonormal basis of $\Real^3$ such that $\theta=a e_1\wedge e_2$. We get that
	$$T=\eta\wedge v,\quad \eta=p\wedge q+a e_1\wedge e_2$$
	and $$R=C_0+\eta\circ\eta,\quad C_0\in\R^0(\g+\Real\eta).$$
	The Lie algebra $\f$ admits the following $\mathbb{Z}_2$-grading:
	$$\f=(\g\oplus\Real v)\oplus\Real^{1,3},$$
	where $\Real^{1,3}=\left<p,q,e_1,e_2\right>$. The Lie bracket
	restricted to   $\Real^{1,3}$ satisfies
	$$[X,Y]=-R(X,Y)-\theta(X,Y)v=-C_0(X,Y)-\eta(X,Y)\eta-\eta(X,Y)v.$$
	Next, $$\textrm{ad}_v|_{\Real^{1,3}}=-\eta.$$
	
	We see that $C_0$ defines the $\mathbb{Z}_2$-graded Lie algebra
	$$\h\oplus\Real^{1,3},\quad \h=C_0(\Real^{1,3},\Real^{1,3}).$$
	It is clear that $\h\subset\Real p\wedge q\oplus\Real e_1\wedge e_2$,
	and $C_0$ is given by
	$$C_0(p\wedge q)=b p\wedge q, \quad C_0(e_1,e_2)=c e_1\wedge e_2.$$
	The Lie algebra $\h$ is either one of the following: $\so(1,1)\oplus\so(2)$,
	$\so(1,1)$, $\so(2)$, or it is trivial.
		For the Lie bracket of $\f$ it holds
	$$[p,q]=-bp\wedge q+\eta+v,\quad [e_1,e_2]=-ce_1\wedge e_2-a\eta-av.$$ 	The rest of the considerations is as above. Suppose that $\h=\so(1,1)\oplus\so(2)$, then $b,c\neq 0$. It holds that $\dim\g=1$ 
	 if and only if $a^2b+cb-c=0$. Otherwise, $\dim\g=2$. 
	 If $\dim\g=1$, then $$\f=[\Real^{1,3},\Real^{1,3}]\oplus\Real^{1,3}=\so(1,2)\oplus \f_2,$$ where $\f_2$ is either $\so(3)$ or $\so(1,2)$.
	 If $\dim\g=2$, then $$\f=\big([\Real^{1,3},\Real^{1,3}]\oplus\Real(\eta+v)\big)\oplus\Real^{1,3}=\so(1,2)\oplus \f_2,$$ where $\f_2$ is again either $\so(3)$ or $\so(1,2)$. Suppose that $\h=\so(1,1)$, then $b\neq 0$, $c=0$.
	 Consequently, $$\f=\big([\Real^{1,3},\Real^{1,3}]\oplus\Real e_1\wedge e_2\big)\oplus\Real^{1,3},\quad  [\Real^{1,3},\Real^{1,3}]\oplus\Real^{1,3}=\so(1,2)\oplus \h_3.$$
	 Suppose that $\h=\so(2)$, then $b=0$, $c\neq 0$, $$\f=\big([\Real^{1,3},\Real^{1,3}]\oplus\Real p\wedge q\big)\oplus\Real^{1,3},\quad  [\Real^{1,3},\Real^{1,3}]\oplus\Real^{1,3}=\h_3\oplus \f_2,$$
	 where $\f_2$ is either $\so(3)$ or $\so(1,2)$.
	 Suppose that $\h=0$, then $b=c=0$, $\g=\Real\eta$, and $\f$ contains a subalgebra transversal to $\g$ and isomorphic to the Heisenberg algebra $\h_5$.

	Suppose that $\g$ preserves the decomposition $\Real^{1,4}=\Real^{1,2}\oplus\Real^2$. 
		According to Example~\ref{ConstrdimL=3}, the torsion is given by
			$$T = \alpha p \wedge e_1 \wedge q + p \wedge e_1 \wedge v
			+p\wedge \lambda,$$
			where $v\in\Real^2$, $\lambda\in\wedge^2\Real^2$, and it holds that $\lambda(v)=0$. If $v\neq 0$, then $\lambda=0$, and
	$$T =  p \wedge e_1 \wedge (\alpha q + v).$$
			Let $v_1\in\Real^2$ be a non-zero vector orthogonal to $v$.
			It is clear that $v_1$ is annihilated by the holonomy and the torsion, i.e., the space is decomposable.
			Thus,
			$$T = \alpha p \wedge e_1 \wedge q +p\wedge \lambda,\quad \lambda=a v_1\wedge v_2,$$ where $v_1,v_2$ is an orthonormal basis of $\Real^2$. The indecomposability implies $a\neq 0$.
				\begin{align*}R(q,e_1) &= \beta p \wedge e_1 + \alpha a v_1 \wedge v_2, \\
	R(v_1, v_2) &= \alpha a p \wedge e_1 + c  v_1 \wedge v_2.  
	\end{align*}
It is easy to see that  $\f' = \f_1 \oplus \f_2$, where 
	$$ \f_1 = \left< p, e, [e, q] \right>, \quad \f_2 = \left< v_1, v_2, [v_1,v_2] \right>.$$
	The Lie algebra $\f_1$ is isomorphic to $\so(1,2)$. The Lie algebra  
	$ \f_2$ is isomorphic to $\so(1,2)$ if $c < 0$,  $ \f_2$ is isomorphic to $\so(3)$ if $c > 0$, and it is isomorphic to $\h_3$ if $c=0$.

	Suppose that $\g$ preserves the decomposition $\Real^{1,4}=\Real^{1,3}\oplus\Real^1$.
		Then as in Example~\ref{ConstrdimL>3},  
	$$ T = \alpha p \wedge e_1 \wedge e_2 + p \wedge e_1 \wedge v, $$ 
	where $v \in \mathR^1$, and $\g=p\wedge \left<e_1,e_2\right>$. In this case $(M,g)$ is a regular homogeneous plane wave.

\end{proof}

\end{theorem}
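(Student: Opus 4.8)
The plan is to argue exactly as in dimensions $3$ and $4$: let $\nabla$ be the canonical connection of $(M,g)$ and $\g\subset\so(1,4)$ its holonomy algebra (so $n=3$). If $\g=0$ then $dT=0$ by \eqref{dT} and, as in Example~3, $(M,g)$ is symmetric; so assume $\g\neq 0$ and run through the list of Section~\ref{SecSum}. Indecomposability rules out case~3, so in every remaining case one has an explicit form of the torsion $T$ and of an algebraic curvature tensor $R\in\R^T(\g)$ subject to $\g\cdot R=0$ and $\g=\im R$; one then assembles the reductive decomposition $\f=\g\oplus\m$ with the bracket recalled in Section~\ref{SecPrel} and reads off the isometry type of $F/G$ from the structure of $\f$ — derived algebra, centre, Killing form — precisely as in dimensions $3$ and $4$.

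When $\g$ is weakly irreducible, or when $\g$ preserves a decomposition with $\dim L=4$ (so $E=\Real^1$), Theorem~\ref{Thnrhs} gives at once that $(M,g)$ is the regular homogeneous plane wave of Example~4 on $\Real^5$. When $\g$ preserves $\Real^{1,2}\oplus\Real^2$ I would use Example~\ref{ConstrdimL=3}: if the vector $v\in\Real^2$ in $T$ is non-zero then $T=p\wedge e_1\wedge(\alpha q+v)$ and a unit vector orthogonal to $v$ is annihilated by $\g$ and by $T$, contradicting indecomposability; hence $v=0$, $T=\alpha\,p\wedge e_1\wedge q+a\,p\wedge v_1\wedge v_2$ with $a\neq 0$, the derived algebra splits as $\f'=\f_1\oplus\f_2$ with $\f_1=\langle p,e_1,[e_1,q]\rangle\cong\so(1,2)$ and $\f_2=\langle v_1,v_2,[v_1,v_2]\rangle$ isomorphic to $\so(1,2)$, $\so(3)$, or $\h_3$ according to the sign of the remaining curvature coefficient, yielding one of the spaces $(\widetilde{\SL(2,\Real)}\times F_2)/G$ of the statement. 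When $\g$ preserves $\Real^{1,1}\oplus\Real^3$ I use Example~\ref{ConstrdimL=2}: after rescaling $v$ and choosing a good basis, $T=\eta\wedge v$ with $\eta=p\wedge q+a\,e_1\wedge e_2$, $R=C_0+\eta\circ\eta$, and $\f=(\g\oplus\Real v)\oplus\Real^{1,3}$ with $\mathrm{ad}_v|_{\Real^{1,3}}=-\eta$; the subalgebra $\h:=C_0(\Real^{1,3},\Real^{1,3})$ lies in $\Real\,p\wedge q\oplus\Real\,e_1\wedge e_2$, and the four possibilities $\h\in\{\so(1,1)\oplus\so(2),\ \so(1,1),\ \so(2),\ 0\}$ produce, respectively, $\so(1,2)\oplus\f_2$, $\so(1,2)\oplus\h_3$, $\h_3\oplus\f_2$, and $\h_5$, i.e.\ the $(F_1\times F_2)/G$, the $H_3$-, and the Heisenberg families.

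The delicate case is $\g$ preserving $\Real^{1,0}\oplus\Real^4$, handled with Example~\ref{ConstrdimL=1}: here $T=e_-\wedge\theta+\omega_E$ and $R=C_0-\theta\circ\theta$. I would first show $\omega_E=0$ — otherwise $\g$ annihilates $*\omega_E\in\Real^4$ and, commuting with $\theta$, is forced into a proper subalgebra of $\so(3)$, making the geometry decomposable. Then $\f=(\g\oplus\Real e_-)\oplus\Real^4$ with $\mathrm{ad}_{e_-}|_{\Real^4}=\theta$ and $[X,Y]=-C_0(X,Y)+\theta(X,Y)\theta-\theta(X,Y)e_-$, and $\h:=C_0(\Real^4,\Real^4)$ must commute with $\theta$, so $\h\in\{\fu(2),\ \so(2)\oplus\so(2),\ \so(2),\ 0\}$. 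In the $\fu(2)$ subcase $C_0$ is, up to scale, the $\fu(2)$-invariant curvature tensor $a\big(X\wedge Y+JX\wedge JY+2(JX,Y)J\big)$ of a complex space form and $\theta=bJ$; computing $\g=\im R$ shows $\g=\su(2)$ exactly when $b^2-3a=0$ and $\g=\fu(2)$ otherwise, and in either event $\f$, possibly after peeling off the central line $\Real(\theta-e_-)$, is $\su(3)$ or $\su(1,2)$ according to the sign of $a$ — the Berger spheres $\SU(3)/\SU(2)$, $\SU(1,2)/\SU(2)$. The subcases $\so(2)\oplus\so(2)$, $\so(2)$, $0$ give $\f_1\oplus\f_2$ (each $\f_i\cong\so(3)$ or $\so(1,2)$, with $G=\SO(2)$ embedded diagonally), $\f_1\oplus\h_3$, and $\h_5$, indecomposability forcing the relevant off-diagonal parameters to be non-zero throughout. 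I expect the main obstacle to be purely organisational: carrying the four reducible cases and their sub-subcases through to a clean identification, together with two recurring subtleties — first, that in the $\fu(2)$ and $\so(2)\oplus\so(2)$ situations $\g$ may be one dimension larger than the naive choice, but the extra generator is central in $\f$, so the transitive subgroup is still the product $F_1\times F_2$ (exactly as in the dimension-$4$ case $(F\times\Real^2)/G$); and second, that one must repeatedly invoke indecomposability to discard spurious reducible subcases ($\omega_E\neq 0$, $v\neq 0$ in the $\Real^2$ case, or vanishing off-diagonal curvature coefficients). Once the case list is exhausted, identifying each resulting Lie algebra through its Killing form is routine, precisely as in dimensions $3$ and $4$.
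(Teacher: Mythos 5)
Your proposal follows essentially the same route as the paper's proof: the same case division by the dimension of the weakly irreducible Lorentzian summand $L$, the same reduction to Examples \ref{ConstrdimL>3}--\ref{ConstrdimL=1}, the same indecomposability arguments forcing $\omega_E=0$ (via $*\omega_E$) and $v=0$ in the respective cases, and the same identification of $\f$ through its derived algebra, centre, and Killing form, including the observation that when $\g$ is one dimension larger the extra generator is central. The argument is correct and matches the paper in all essential steps.
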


{\bf Conflict of interest statement.}
On behalf of all authors, the corresponding author states that there is no conflict of interest.

{\bf Data Availability Statements.}
Data sharing not applicable to this article as no datasets were generated or analyzed during the current study.


\begin{thebibliography}{10}
	
	
	\bibitem{AF04}  I.\,Agricola, Th.\,Friedrich, On the holonomy of connections with skew-symmetric torsion. Math. Annalen 328 (2004), no. 4, 711-748.


	\bibitem{A10}  I.\,Agricola,
Non-integrable geometries, torsion, and holonomy. in Handbook of pseudo-Riemannian Geometry and Supersymmetry,
IRMA, EMS, 2010, 277--346.

	
	\bibitem{AFe14}  I. Agricola, C. Ferreira,  Einstein manifolds with skew torsion.	Quarterly J.  Math. 65 (2014),  717--741.
	
	\bibitem{AFeF15} I. Agricola, C. Ferreira, Th. Friedrich, 	The classification of naturally reductive homogeneous spaces in dimensions $n\leq 6$.
	 Diff. Geom. Appl. 39 (2015), 59--92.
	 
	 \bibitem{BLM15} W. Batat, M. Castrillón L\'opez, E. Rosado Mar\'ia,
	 Four-dimensional naturally reductive pseudo-Riemannian spaces.
	 Diff. Geom. Appl. 41 (2015), 48--64.
	 
	
	\bibitem{Walkerbook} M. Brozos-V\'azquez, E. Garc\'ia-R\'io, P. Gilkey, S. Nik\v{c}evi\'c, R. V\'azquez-Lorenzo, The
	geometry of Walker manifolds, Synth. Lect. Math. Stat., 5, Morgan \&
	Claypool
	Publishers, Williston, VT, 2009.
	
	\bibitem{BBI} L. B\'erard-Bergery, A. Ikemakhen, On the holonomy of Lorentzian manifolds.
	 Proc. Sympos. Pure Math., 54, Amer. Math. Soc., Providence,
	RI, 1993, 27--40.
	
	
	\bibitem{Besse}  A. Besse, Einstein manifolds. Springer Verlag, 1987.
	
	\bibitem{BOL} 	M. Blau, M. O'Loughlin. Homogeneous plane waves. Nuclear Phys. B, 654 (2003), no. 1-2, 135--176, .
	
	\bibitem{CLBook} G. Calvaruso, M.C. L\'opez, Pseudo-Riemannian Homogeneous Structures. Springer 2019.
	
	\bibitem{CM08} G. Calvaruso, R.A. Marinosci, Homogeneous geodesics of non-unimodular Lorentzian Lie groups and naturally reductive Lorentzian spaces in dimension three. Adv. Geom. 8(4) (2008), 473--489.
	
	\bibitem{CMS21}
	R. Cleyton, A. Moroianu, U. Semmelmann,
	Metric connections with parallel skew-symmetric torsion. Adv. Math. 378 (2021) 107519.
	
	
	\bibitem{CS04}
	R. Cleyton, A. Swann,  Einstein metrics via intrinsic
	or parallel torsion. Math. Z. 247 (2004), 513--528. 
	
	
	\bibitem{FF09} J. Figueroa-O'Farrill, Lorentzian symmetric spaces in supergravity. In Recent Developments in Pseudo-Riemannian Geometry, ESI Lect. Math. Phys., Eur. Math. Soc., Z\"urich,  419--454.
	
	
	\bibitem{FFSPPM} {J. Figueroa-O'Farrill,  S. Philip and Patrick Meessen}, Homogeneity and plane-wave limits, J. High Energy Phys. 05(05) (2005). 
	
	
	\bibitem{FI02}  Th. Friedrich, S. Ivanov, Parallel spinors and connections with skew-symmetric torsion in string theory, Asian J. Math. 6 (2002), no. 2, 303--335.
	
	\bibitem{GalRMS}  A. S. Galaev, Holonomy groups of Lorentzian manifolds. Russian Math. Surv. 70 (2015), no. 2, 249--298.
	
	\bibitem{GL}
	W. Globke, Th.~Leistner, Locally homogeneous pp-waves. J. Geom. Phys. 108 (2016),  83--101.
	
\bibitem{HBRR}	N. Huaoari, W. Batat, N. Rahmani, S. Rahmani, Three-dimensional naturally reductive homogeneous Lorentzian manifolds. Mediterr. J. Math. 5 (2008), 113–131.
	
\bibitem{KV}	O. Kowalski and L. Vanhecke. Classification of five-dimensional naturally reductive spaces.
	Math. Proc. Cambridge Philos. Soc., 97(3):445–463, 1985.
	
	\bibitem{L07} Th.~Leistner, {On the classification of Lorentzian holonomy groups.} J. Diff. Geom. 76 (2007), no. 3, 423--484.
	
	\bibitem{LScr} Th.~Leistner, Screen bundles of Lorentzian manifolds and some generalisations of pp-waves. J. Geom. and Phys. 56 (2006), no. 10, 2117--2134.
	
	\bibitem{MSh} \'A.\,Murcia, C.S.\,Shahbazi,
	Contact metric three manifolds and Lorentzian geometry with torsion in six-dimensional supergravity.
	J.  Geom. Phys. 158 (2020),
	103868.
	
\bibitem{TV}	F. Tricerri and L. Vanhecke. Homogeneous structures on Riemannian manifolds, volume 83 of
	London Mathematical Society Lecture Note Series. Cambridge University Press, Cambridge,
	1983.
	
	\bibitem{Storm1} R. Storm, Structure theory of naturally reductive spaces. Diff. Geom. and its Appl. 64 (2019), no. 64, 174--200.
	
	\bibitem{Storm2} R. Storm, The Classification of 7- and 8-dimensional Naturally Reductive Spaces. Can. J. Math. 72 (2020), 1246--1274.
	
	
	\bibitem{Storm3} R. Storm, A new construction of
	naturally reductive spaces. Transformation Groups 23, 527--553 (2018).
	
	\bibitem{Str}  A. Strominger, Superstrings with torsion. Nucl. Phys. B274 (1986), 253--284.
	
	


\end{thebibliography}
\end{document}